\newcommand*\patchAmsMathEnvironmentForLineno[1]{%
  \expandafter\let\csname old#1\expandafter\endcsname\csname #1\endcsname
  \expandafter\let\csname oldend#1\expandafter\endcsname\csname end#1\endcsname
  \renewenvironment{#1}%
     {\linenomath\csname old#1\endcsname}%
     {\csname oldend#1\endcsname\endlinenomath}}%
\newcommand*\patchBothAmsMathEnvironmentsForLineno[1]{%
  \patchAmsMathEnvironmentForLineno{#1}%
  \patchAmsMathEnvironmentForLineno{#1*}}%
\@date \else {\vskip3ex \centering\footnotesize\@date\par\vskip1ex}\fi
\else \@footnotetext{\@setdate}\fi}
\newtheorem{theorem}{Theorem}[section]
\newtheorem{lemma}[theorem]{Lemma}
\newtheorem{proposition}[theorem]{Proposition}
\theoremstyle{definition}
\newtheorem{define}[theorem]{Definition}
\newtheorem{assumption}[theorem]{Assumption}
\newtheorem{remark}[theorem]{Remark}
\newcommand\ba[1]{\begin{align}\label{#1}}
\newcommand\ea{\end{align}}
\newcommand\bas{\begin{align*}}
\newcommand\eas{\end{align*}}
\newcommand\ee{\end{equation}}
\newcommand\be{\begin{equation}}
\newcommand\ees{\end{equation*}}
\newcommand\bes{\begin{equation*}}
\mathchardef\emptyset="001F
\newcommand{\e}{\varepsilon}
\newcommand{\R}{{\mathbb R}}
 \newcommand{\8}{\infty}
\newcommand{\I}{{\mathcal I}}
\newcommand{\rn}{{{\R}^N}}
\newcommand{\wto}{\rightharpoonup}
\newcommand{\wtos}{\mathrel{\mathop{\rightharpoonup}\limits^*}}
\newcommand{\N}{{\mathbb{N}}}
\newcommand{\B}{{\mathscr B}}
\newcommand{\FE}{{\mathcal E}}
\newcommand\norm[1]{\left\|#1\right\|}
\newcommand{\abs}[1]{\left\lvert#1\right\rvert} 
\newcommand{\fsp}[1]{\left(#1\right)} 
\newcommand{\fmp}[1]{\left[#1\right]}
\newcommand{\flp}[1]{\left\{#1\right\}}
\newcommand{\vp}{\varphi}
\newcommand{\limn}{\lim_{n\rightarrow\infty}}
\newcommand{\divg}{{\operatorname{div}}}
\newcommand{\seqn}[1]{\left\{#1\right\}_{n=1}^\infty}
\newcommand{\seqk}[1]{\left\{#1\right\}_{k=1}^\infty}  
\newcommand{\seqe}[1]{\left\{#1\right\}_{\e>0}}
\newcommand{\liminfn}{{\liminf_{n\to\infty}}}
\newcommand{\ir}{{\lfloor r\rfloor}}
\newcommand\M{\mathbb M}
\definecolor{CMUred}{RGB}{153,0,0}
\definecolor{CMUgreen}{RGB}{0,135,81}
\definecolor{CMUblue}{RGB}{0,51,127}
\definecolor{Pblue}{RGB}{87,158,208}
\newcommand{\argmin}{{\operatorname{arg\,min}}}
\newcommand\T{\mathbb{T}}
\newcommand\mb{{\mathcal{M}_b}}
\newcommand{\FT}{{\mathcal T}}
\newcommand{\TGV}{{\operatorname{TGV}}}
\newcommand{\RVL}{{\operatorname{RVL}}}
\def\argmin{\mathop{\rm arg\, min}}
\numberwithin{equation}{section}
\newcommand{\normmm}[1]{{\left\vert\kern-0.25ex\left\vert\kern-0.25ex\left\vert #1 
    \right\vert\kern-0.25ex\right\vert\kern-0.25ex\right\vert}}
\title{Real order (an)-isotropic total variation in image processing - Part II: Learning of optimal structures}
\author[P. Liu] {Pan Liu}
 \address[Pan Liu]{Centre of Mathematical Imaging and Healthcare,\\ 
Department of Pure Mathematics and Mathematical Statistics, \\
 University of Cambridge,\\
  Wilberforce Road, Cambridge CB3 0WA, UK}
 \email[P. Liu] {panliu.0923@maths.cam.ac.uk}
 \author[X.Y. Lu] {Xin Yang Lu}
 \address[Xin Yang Lu]{Department of Mathematical Sciences, Lakehead University,
 955 Oliver Road, Thunder Bay, ON, Canada\\
 AND\\
 Department of Mathematics and Statistics
Burnside Hall, McGill University\\805 Sherbrooke Street West, Montreal, QC, Canada }
 \email[X. Lu] {xlu8@lakeheadu.ca}
\subjclass[2010]{26B30, 94A08, 	47J20}
\keywords{total variation, fractional derivative,  calculous of variations}
\date{\today}                                           
\begin{document}


\begin{abstract}
This article is the second work in our series of papers dedicated to image processing models
based on the fractional order total variation $TV^r$. In our first work \cite{liulud2019Image}, we studied key analytic properties of these semi-norms.
 Here we focus on the more applied aspects
of such models: first, in order to obtain a better reconstructed image, we propose several extensions of the fractional order total variation. Such generalizations, collectively denoted by $\RVL$, will be modular, i.e. the parameters therein are mutually independent, and can be fine tuned to the particular task. Then, we will study the bilevel training schemes based on $\RVL$, and show that such schemes are well defined, i.e. they admit minimizers. Finally, we will provide some numerical examples, showing that training schemes based on $\RVL$ are effectively better than those based on classical regularizer $TV$ and $TGV^2$.
\end{abstract}

\maketitle
\tableofcontents

\thispagestyle{empty}

\section{Introduction}\label{sec:intro}
This article is the second of our series of works on image processing models
based on the real order total variation. In the previous paper \cite{liulud2019Image}, 
we introduced the semi-norm
\be\label{mumfordshahori233}
TV^r(u):=\sup\flp{\int_Qu\, [\divg^s\divg^\ir \vp] \,dx:\text{ for }\vp\in C_c^\infty(Q;\M^{N\times (N^{k})})\text{ and }\abs{\vp}\leq 1}.
\ee
Here $r=\ir+s\in\R^+$, and the fractional order derivative $\divg^s$ is realized by the \emph{Riemann-Liouville} fractional derivative (see \eqref{R_L_frac_1d_left}). 
We also write $r=\ir+s$ for $\ir\in \N$ and $s\in[0,1)$, and we denote by
\be
BV^r(Q):=\flp{u\in L^1(Q):\,\, TV^r(u)<+\infty}
\ee
the space of functions with bounded $r$-order total variation.\\\\
In this article we focus on the applications of such real order total variation to imaging processing problems, as well as a bilevel training scheme which determines the optimal parameters used in 
the underlying variational model. As we have observed in our previous work \cite{liulud2019Image}, 
the definition of fractional order \emph{Riemann-Liouville} derivative requires rather strict boundary conditions on $u$, to prevent singularities from arising at the boundary. 
Especially in numerical realization, inaccurate boundary conditions could generate oscillations around the boundaries. To overcome this issue, in \cite{zhang2015total} 
(also see  Section \ref{the_greatest_saver}), the authors introduced a modified method, tailored for image applications, to reduce the non-zero \emph{Dirichlet} boundary 
condition to a zero \emph{Dirichlet} boundary condition. In this spirit, in this article, we consider only functions $u$ such that 
\begin{itemize}
\item
$u$ has zero boundary conditions (depending on the order $r$, see Section \ref{the_greatest_saver} for details).
\end{itemize}
The aim of this article is twofold. We shall construct a new family of regularizers, based on \eqref{mumfordshahori233}, in a modular way, and then coupling it with the bilevel training scheme.
In this way, better imaging processing results can be achieved.\\\\
The \emph{bilevel training scheme}, arising in machine learning, is a semi-supervised training scheme that optimally adapts itself to the given ``perfect data" 
(see \cite{domke2012generic, domke2013learning, tappen2007utilizing,tappen2007learning}). To apply such training scheme to image processing problem, we assume that we have a pair 
of images $u_\eta\in L^2(Q)$ and $u_c\in BV(Q)$, representing the corrupted image and the corresponding clean one.
Then, a simple implementation of such bilevel training scheme with the standard $TV$ regularizer, which we call scheme $\mathcal B$, is
\begin{flalign}
\text{Level 1. }&\,\,\,\,\,\,\,\,\,\,\,\,\,\,\,\,\,\,\,\alpha_\T\in\mathbb A[\T]:=\argmin\flp{\norm{u_\alpha-u_c}_{L^2(Q)}^2:\,\,\alpha\in\T:=[0,T]}\tag{$\mathcal B$-L1},\label{intro_B_train_level1}&\\
\text{Level 2. }&\,\,\,\,\,\,\,\,\,\,\,\,\,\,\,\,\,\,\,u_{\alpha}:=\argmin\flp{\norm{u-u_\eta}_{L^2(Q)}^2+\alpha TV(u)+\kappa H^2(u):\right.\\
&\,\,\,\,\,\,\,\,\,\,\,\,\,\,\,\,\,\,\,\,\,\,\,\,\,\,\,\,\,\,\,\,\,\,\,\,\,\,\,\,\,\,\,\,\,\,\,\,\,\,\,\,\,\,\,\,\,\,\,\,\,\,\,\,\,\,\,\,\,\,\,\,\,\,\,\,\,\,\,\,\,\,\,\,\,\,\,\,\,\,\,\,\,\,\,\,\,\,\,\,\,\,\,\,\,\,\,\,\,\,\,\,\,\,\,\,\,\,\,\,\,\,\,\,\,\,\,\,\,\,\,\,\,\left.u\in BV(Q)\cap H_0^2(Q)},\tag{$\mathcal B$-L2}\label{intro_B_train_level2}&
\end{flalign}
where: 
\begin{itemize}
\item
$\T:=[0,T]$ is the {\em training ground}, 
\item
 $T\in\R^+$ is a parameter chosen by the user, usually called the \emph{box-constraint} (see, e.g. \cite{bergounioux1998optimal, de2013image}) of the training parameter, 
 \item
 $\kappa H^2(u)$ is the \emph{Huber}-regularization (see Section \ref{the_greatest_saver}), and $\kappa>0$ is a small constant,
 \item
 the space $H_0^2(Q)$ is to enforce the corresponding zero boundary conditions,
 \item
 the \emph{minimum assessment value} (MAV) is defined to be the value
 \be\label{mav_value}
 \inf\flp{\norm{u_\alpha-u_c}_{L^2(Q)}^2:\,\,\alpha\in\T:=[0,T]},
 \ee
 i.e., the minimum distance between the clean image $u_c$ to the optimal reconstructed image $u_{\alpha_\T}$ provided by the current training scheme.
\end{itemize}
In this way, the training scheme $\mathcal B$, \eqref{intro_B_train_level1}-\eqref{intro_B_train_level2}, provides the optimal intensity parameter $\alpha_\T$ for the given training set $u_\eta$ and $u_c$.\\\\
We note that the upper level problem \eqref{intro_B_train_level1}  optimizes the reconstructed image $u_\alpha$ by adjusting only the value of 
the intensity parameter $\alpha$. Thus, to improve the training result, we can replace the $TV$ semi-norm in \eqref{intro_B_train_level2} with 
the more general real order $TV^{s}$ semi-norm, with $s\in[0,1]$, and hence expand the training options. To this aim, we introduce the following scheme, denoted by
$\mathcal B'$: 
\begin{flalign}
\text{Level 1. }&\,\,\,\,\,\,\,\,\,\,\,\,\,\,\,\,(\alpha_{\T'},s_{\T'})\in\mathbb A[\T']:=\argmin\flp{\norm{u_{\alpha,s}-u_c}_{L^2(Q)}^2:\right.\\
&\,\,\,\,\,\,\,\,\,\,\,\,\,\,\,\,\,\,\,\,\,\,\,\,\,\,\,\,\,\,\,\,\,\,\,\,\,\,\,\,\,\,\,\,\,\,\,\,\,\,\,\,\,\,\,\,\,\,\,\,\,\,\,\,\,\,\,\,\,\,\,\,\,\,\,\,\,\,\,\,\,\,\,\,\,\,\,\,\,\,\,\,\,\,\,\,\,\,\,\,\,\,\,\,\,\,\,\,\,\,\,\,\,\,\left.(\alpha,s)\in\T':=[0,T]\times[0,1]}\tag{$\mathcal B'$-L1}\label{intro_B_p_train_level1}&\\
\text{Level 2. }&\,\,\,\,\,\,\,\,\,\,\,\,\,\,\,\,u_{\alpha,s}:=\argmin\flp{\norm{u-u_\eta}_{L^2(Q)}^2+\alpha TV^s(u)+\kappa H^{2}(u):\right.\\
&\,\,\,\,\,\,\,\,\,\,\,\,\,\,\,\,\,\,\,\,\,\,\,\,\,\,\,\,\,\,\,\,\,\,\,\,\,\,\,\,\,\,\,\,\,\,\,\,\,\,\,\,\,\,\,\,\,\,\,\,\,\,\,\,\,\,\,\,\,\,\,\,\,\,\,\,\,\,\,\,\,\,\,\,\,\,\,\,\,\,\,\,\,\,\,\,\,\,\,\,\,\,\,\,\,\,\,\,\,\,\,\,\,\,\left.u\in BV(Q)\cap H_0^2(Q)},\tag{$\mathcal B'$-L2}\label{intro_B_p_train_level2}&
\end{flalign}
That is, in scheme $\mathcal B'$, \eqref{intro_B_p_train_level1}-\eqref{intro_B_p_train_level2}, the upper level problem \eqref{intro_B_p_train_level1} is able to optimize the reconstructed image $u_{\alpha,s}$ by adjusting simultaneously 
the intensity parameter $\alpha$ and the order $s\in[0,1]$. Thus, the new scheme $\mathcal B'$ provides improved results compared to scheme $\mathcal B$, as it has more training options.\\\\
Following this spirit, we could continue to improve the training result if we are able to further generalize the regularizer $TV^s$. This is one of the main topics of this article. \\\\
Succintly, we generalize the total variation $TV$ term in two steps. For simplicity, at each step, we focus on generalzing only one parameter.
\begin{enumerate}[Step 1.]
\item
The primal extension on total variation (Section \ref{sec_extension_primal})
\begin{enumerate}[1.]
\item
(Section \ref{sec_bilevel_TVr}) Generalize $TV$ to the family of real order total variations
\be\label{intro_TVrr}
\flp{TV^r:\,\,r\in[0,T]}.
\ee
\item
(Section \ref{sec_bilevel_TVp}) Extend \label{intro_TVr} to
\be\label{intro_TVp}
\flp{TV^r_{\ell^p}:\,\,r\in[0,T],\,\,p\in[1,+\infty]},
\ee
i.e., the family of $\ell^p$-(an)-isotropic total variations, where $\ell^p$ denotes the underlying Euclidean norm used to define the total variation seminorm.
\item
Combine \eqref{intro_TVrr} and \eqref{intro_TVp} to obtain collection 
\be\label{intro_TV_pr}
\flp{TV^r_{\ell^p}:\,\,r\in[0,T],\,\,p\in[1,+\infty]}.
\ee
\end{enumerate}
\item
The \emph{infimal convolution} extension (Section \ref{sec_extension_infimal})
\begin{enumerate}[1.]
\item
(Section \ref{sec_tgv_r}) Extending $TV$ to collection 
\be\label{intro_TGV}
\flp{\TGV^{r,\kappa}:\,\,r\in[1,T]},
\ee
 the collection of real order total generalized variation, where $\kappa\in(0,1)$ is the embedded Huber-regularization.
\item
(Section \ref{sec_tgv_rp}) Extending the family \eqref{intro_TGV} to  
\be\label{intro_TGV_pr}
\flp{\TGV_{p}^{r,\kappa}:\,\,r\in[1,T],\,\,p\in[1,+\infty]}
\ee
\end{enumerate}
\item
(Section \ref{sec_RVL_final}) The final collection 
\be
\flp{\RVL^{r,\kappa}_{\alpha,p}:\,\,(\alpha,r,p)\in \T}
\ee
 is obtained by combining the families in \eqref{intro_TV_pr} and \eqref{intro_TGV_pr}, where 
\be\label{big_training_set}
\T:=[0,T]^{T+1}\times[1,T]^2\times [1,+\infty]^{T+1}.
\ee
That is, we have here $r=(r_1,r_2)\in\R^2_+$.
\end{enumerate}
The collection of regularizers $\RVL^{r,\kappa}_{\alpha,p}$ provides a significant generalization of the original total variation semi-norm $TV$, 
and also provides an unified approach to widely used regularizers such as $TV$ and $TGV$. Therefore, the new training scheme $\mathcal T$, equipped with such regularizer, 
is given by
\begin{flalign}
\text{Level 1. }&\,\,\,\,\,\,\,\,\,\,\,\,(\alpha_{\T}, r_{\T},p_\T)\in\mathbb A[\T]:=\argmin\flp{\norm{u_{\alpha,r,p}-u_c}_{L^2(Q)}^2:\,\,(\alpha,r,p)\in\mathbb T}\tag{$\mathcal T$-L1}\label{S_scheme_L1_D0_intro}&\\
\text{Level 2. }&\,\,\,\,\,\,\,\,\,\,\,\,u_{\alpha,r,p}:=\argmin\flp{\norm{u-u_\eta}_{L^2(Q)}^2+ \RVL^{r,\kappa}_{\alpha,p}(u)+\kappa H^{\lfloor r_1\rfloor+1}(u):\right.\\
&\,\,\,\,\,\,\,\,\,\,\,\,\,\,\,\,\,\,\,\,\,\,\,\,\,\,\,\,\,\,\,\,\,\,\,\,\,\,\,\,\,\,\,\,\,\,\,\,\,\,\,\,\,\,\,\,\,\,\,\,\,\,\,\,\,\,\,\,\,\,\,\,\,\,\,\,\,\,\,\,\,\,\,\,\,\,\,\,\,\,\,\,\,\,\,\,\,\,\,\,\,\,\,\,\,\,\,\,\,\,\,\,\,\,\,\,\,\,\,\,\,\,\,\left. u\in H_0^{\lfloor r_1\rfloor+1}(Q)}.\tag{$\mathcal T$-L2}\label{S_scheme_L2_D0_intro}&
\end{flalign}
with the expanded training ground defined in \eqref{big_training_set}, shall indeed provides an improved reconstruction image compared to scheme $\mathcal B$ and $\mathcal B'$.\\\\
We remark that the new regularizer $\RVL$ takes a \emph{modular design}. That is, each of the parameters $\alpha$, $p$, $r$ does not depend on another, 
and can be independently fine tuned to the current imaging processing task. For example, if the task is image denosing, then, based on our previous experience, 
the intensity parameter $\alpha$ and the derivation parameter $r$ play an essential role. 
Hence, we could opt to freeze the Euclidean parameter $p$, and optimize with respect to $\alpha$ and $r$, therefore reduce CPU consumption. 
On the other hand, if the task is inpainting, then the intensity parameter $\alpha$ is less important (and usually set to a small value), 
and we could freeze $\alpha$ and only optimize with respect to $r$ and $p$. Of course, to achieve optimal results, we could always train with respect to all the parameters, 
which, however, involves rather larger CPU cost.\\\\ 
The main result is:
\begin{theorem}[see Theorem \ref{thm_TS_S_solution}]
\label{main_thm_intro}
Let a Training Ground $\mathbb T$ be given. 
Then, the training scheme $\mathcal T$ (\eqref{S_scheme_L1_D0_intro}-\eqref{S_scheme_L2_D0_intro}) 
admits at least one solution $( \alpha_\T, r_\T,p_\T)\in \T$,
and provides a corresponding optimal reconstructed image $u_{\alpha_\T, r_\T,p_\T}$.
\end{theorem}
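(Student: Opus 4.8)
The plan is to treat the bilevel scheme by the direct method of the calculus of variations, viewing the lower level \eqref{S_scheme_L2_D0_intro} as a reconstruction map $(\alpha,r,p)\mapsto u_{\alpha,r,p}$ on the training ground $\T$ and reducing \eqref{S_scheme_L1_D0_intro} to the minimization of $J(\alpha,r,p):=\norm{u_{\alpha,r,p}-u_c}_{L^2(Q)}^2$ over $\T$. First I would check that this map is single valued: for fixed $(\alpha,r,p)\in\T$ the lower-level functional is the sum of the strictly convex fidelity $\norm{u-u_\eta}_{L^2(Q)}^2$, the convex seminorm $\RVL^{r,\kappa}_{\alpha,p}$, and the convex Huber term $\kappa H^{\lfloor r_1\rfloor+1}$, and is coercive on $H_0^{\lfloor r_1\rfloor+1}(Q)$ thanks to that Huber term. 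Existence of a lower-level minimizer then follows from the lower semicontinuity and coercivity already established for $\RVL$ in Sections~\ref{sec_extension_primal}--\ref{sec_extension_infimal}, while strict convexity of the fidelity gives uniqueness; hence $u_{\alpha,r,p}$, and therefore $J$, is well defined.

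Since $\T=[0,T]^{T+1}\times[1,T]^2\times[1,+\infty]^{T+1}$ is compact (with $[1,+\infty]$ carrying the one-point compactification topology that realizes the endpoint $p=+\infty$), the existence of an optimal triple reduces to proving that $J$ is lower semicontinuous on $\T$. I would fix a minimizing sequence $(\alpha_n,r_n,p_n)$, pass to a subsequence converging to some $(\alpha_*,r_*,p_*)\in\T$, and study the reconstructions $u_n:=u_{\alpha_n,r_n,p_n}$. Finiteness of the MAV \eqref{mav_value} bounds the lower-level energies uniformly, and coercivity of the Huber term then bounds $u_n$ in a fixed Sobolev space, so that along a further subsequence $u_n\wc\bar u$ and $u_n\to\bar u$ strongly in $L^2(Q)$.

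On the strata of $\T$ where $\lfloor r_1\rfloor$ and $\lfloor r_2\rfloor$ are locally constant, the ambient space and the Huber order do not change, and I would identify $\bar u$ with $u_{\alpha_*,r_*,p_*}$ by invoking the parameter stability of the regularizer family, namely $\RVL^{r_n,\kappa}_{\alpha_n,p_n}\wtog\RVL^{r_*,\kappa}_{\alpha_*,p_*}$ with respect to $L^2$-convergence, which should follow from the $\Gamma$-convergence and continuity estimates for the constituent families $TV^r_{\ell^p}$ and $\TGV^{r,\kappa}_p$. Combining the $\Gamma$-$\liminf$ inequality, the strong $L^2$-convergence of the fidelity, lower semicontinuity of the Huber term, and uniqueness from strict convexity forces $u_n\to u_{\alpha_*,r_*,p_*}$ in $L^2(Q)$, and hence continuity of $J$ on such strata, so that passing to the limit yields $J(\alpha_*,r_*,p_*)=\inf_\T J$.

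The step I expect to be the main obstacle is precisely the behaviour of $J$ at integer values of $r_1$ (and $r_2$), where the admissible class $H_0^{\lfloor r_1\rfloor+1}(Q)$ and the Huber order $\lfloor r_1\rfloor+1$ jump discontinuously. Approaching an integer $m$ from below, the uniform a priori bound lives only in $H_0^{m}(Q)$, whereas the limit lower-level problem is posed on the strictly smaller space $H_0^{m+1}(Q)$; this one-derivative mismatch means that naive $\Gamma$-convergence fails and that $\bar u$ need not coincide with $u_{\alpha_*,r_*,p_*}$. The real work is therefore to establish the correct one-sided lower semicontinuity of $J$ across these jumps---most plausibly by a stratification of $\T$ into finitely many pieces of constant $(\lfloor r_1\rfloor,\lfloor r_2\rfloor)$ together with a careful comparison of reconstructions on adjacent strata---relying on the fine boundary estimates for the Riemann--Liouville derivatives from Part~I \cite{liulud2019Image}. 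Granting this lower semicontinuity, the compactness argument above closes and the scheme admits an optimal $(\alpha_\T,r_\T,p_\T)$.
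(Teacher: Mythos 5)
Your overall strategy is the same as the paper's: reduce to minimizing the assessment $J(\alpha,r,p)=\norm{u_{\alpha,r,p}-u_c}_{L^2(Q)}^2$ over the compact set $\T$, use strict convexity of the fidelity for uniqueness of the lower-level minimizer, and obtain lower semicontinuity of $J$ along a minimizing sequence from the $\Gamma$-convergence of $\mathcal I^\kappa_{\alpha_n,r_n,p_n}$ to $\mathcal I^\kappa_{\alpha_*,r_*,p_*}$. This is exactly Case 1 of the paper's proof of Theorem \ref{thm_TS_S_solution}.

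However, there is a genuine gap: you never treat the degenerate situation in which one or more components of $\alpha_n$ tend to $0$, which is admissible since the training ground contains $\alpha\in[0,T]^{T+1}$. The stability statement you invoke, $\RVL^{r_n,\kappa}_{\alpha_n,p_n}\wtog\RVL^{r_*,\kappa}_{\alpha_*,p_*}$, is established in the paper only for weights in $\R_+^{\lfloor r\rfloor+1}$ (Theorem \ref{thm:new-Gamma thm} and its constituents), and when a weight vanishes the regularizer degenerates, so the identification of the weak limit $\bar u$ with $u_{\alpha_*,r_*,p_*}$ cannot be obtained this way. This is precisely where the paper spends most of its effort: it argues separately (Case 2) by inserting mollified competitors $u_\eta^\e=u_\eta\ast\eta_\e$ into the lower-level problem to get
$\norm{u_{\alpha_n,r_n,p_n}-u_\eta}^2_{L^2(Q)}\leq\norm{u_\eta^\e-u_\eta}^2_{L^2(Q)}+\alpha_{1,n}\norm{u_\eta^\e}_{W^{\lfloor r_1\rfloor+1,\infty}}$, and then sends $\alpha_{1,n}\to0$ followed by $\e\to0$ to identify the limit reconstruction. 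Without an argument of this type your proof does not close. A secondary issue is that you explicitly defer the behaviour at integer values of $r$ (the jump of $H_0^{\lfloor r_1\rfloor+1}(Q)$) as ``the real work'' without resolving it; the paper handles the analogous transitions inside its $\Gamma$-convergence lemmas (Proposition \ref{asymptotic_tvs}, Case 2, and Proposition \ref{thm_asymptitic_PGV}) rather than in the final theorem, so you should either cite those results or supply the missing limit analysis — as written, the proposal acknowledges but does not fill this hole.
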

This paper is organized as follows. In Section \ref{tvs_setting_prop} we collect some preliminary notations and properties about the fractional order $TV^r$-seminorms,
mainly from the first work of our series (see \cite{liulud2019Image}). The construction and analysis of the new regularizer $\RVL$, 
as well as the new training scheme $\FT$, are the main topics of Section \ref{sec_bilevel}. Numerical implementations of some explicit examples are provided in Section \ref{num_sim_pd}.
\section{Notations and preliminary results}\label{tvs_setting_prop}
For future reference, we will use $r\in \R^+$ to denote a positive constant, and we write $r=\ir+s$, where $\ir$ denotes the integer part of $r$ and $s\in[0,1)$ denotes the fractional
part.
\subsection{Riemann-Liouville fractional order derivative}\label{sec_fractional_derivative}
Let $I:=(0,1)$ be the unit interval, and let $w\in C^\infty(I)$ be a given function. The \emph{left-sided R-L derivative} of order $r=\ir+s\in\R^+$ (see  \cite{MR1347689}) 
is defined pointwise by
\be\label{R_L_frac_1d_left}
d^{r}_{L}w(x) = \frac{1}{\Gamma(1-s)}\fsp{\frac d{dx}}^{\ir+1}\int_0^x \frac{w(t)}{(x-t)^{s}}dt, \qquad x\in I,
\ee 
where \[\Gamma(s):=\int_0^\infty e^{-t}t^{s-1}dt
             \]
denotes the Gamma function.             
 Similarly, the \emph{right-sided R-L derivative} and \emph{central-sided R-L derivative} of order  $r\in\R^+$ are defined 
 respectively by
\be
d^{r}_{R}w(x) = \frac{(-1)^{\ir+1}}{\Gamma(1-s)}\fsp{\frac d{dx}}^{\ir+1}\int_x^1 \frac{w(t)}{(t-x)^{s}}dt,
\ee
and
\be
d_C^{r}w(x) = \frac12 \fmp{d^{r}_{L}w(x)+(-1)^{\ir+1}d^{r}_{R}w(x)}.
\ee
The left and right \emph{Riemann-Liouville} fractional integrals of order $r\in\R^+$ are defined by
\[
(I_{L}^r w)(x):=\frac{1}{\Gamma(r)}\int_0^x \frac{w(t)}{(x-t)^{1-r}}dt, \]
and 
\[(I_{R}^r w)(x):=\frac{1}{\Gamma(r)}\int_x^1 \frac{w(t)}{(x-t)^{1-r}}dt
\] 
respectively. We also recall another type of fractional derivative, the left-sided \emph{Caputo} $r$-order derivative, given by 
\be
d^r_{L,c} w(x):=\frac{1}{\Gamma(1-s)}\int_0^x \frac{(d^{\ir+1})w(t)}{(x-t)^{s}}dt.
\ee
Similarly, the right-sided and central-sided \emph{Caputo} $r$-order derivative are given by
\[
d^{r}_{R,c}w(x):= \frac{(-1)^{\ir+1}}{\Gamma(1-s)}\int_x^1 \frac{(d^{\ir+1})w(t)}{(t-x)^{s}}dt,\]
 and \[d^{r}_{C,c}w(x):=\frac12 \fmp{d^{r}_{L,c}w(x)+(-1)^{\ir+1}d^{r}_{R,c}w(x)}\]
respectively. We also recall that the following integration by parts formula holds:
\be
\int_I w\,d_c^r \vp\,dx=(-1)^{\ir+1}\int_I d^rw\, \vp\,dx.
\ee
In view of \eqref{caputo_eq_RL}, we also have
\be
\int_I w\,d^r \vp\,dx=(-1)^{\ir+1}\int_I d^rw\, \vp\,dx.
\ee
Moreover, both types of fractional derivatives are linear: given $a$, $b\in\R$, we have
\be\label{linearity_frac}
d^r(a w_1(x)+b w_2(x))=a \,d^r w_1(x)+b\, d^r w_2(x)
\ee
for any functions $w_1$, $w_2$. We shall use this property repeatedly throughout this article. \\\\
We close Section \ref{sec_fractional_derivative} by citing the following one dimensional result. For convenience, we use a unified notation by writing $I^r = d^{-r}$, $r>0$.
\begin{remark}\label{constant_in_kernal}
We remark that (see \cite[Remark 2]{zhang2015total}), for $\vp\in C^\infty(I)$ with
zero boundary conditions on sufficiently high order derivatives, i.e. $v\lfloor_{\partial I}=dv\lfloor_{\partial I}=\cdots = d^\ir v\lfloor_{\partial I}=0$, we have
\be\label{caputo_eq_RL}
d^{r}_{C,c}\vp(x)= d^{r}_{C}\vp(x),\text{ for all }x\in I.
\ee
Also, we note that for any constant $M\in\R$, we have $d^{r}_{C,c} M=0$.
\end{remark}

\begin{theorem}[Semigroup properties in one dimension, {\cite[Theorem 2.6]{MR1347689}}]\label{semigroup_frac_int}
Let $r\in\R^+$ be given.
\begin{enumerate}[1.]
\item
The fractional order integration operators $I^r$ form a semigroup on $L^p(I)$, $p\geq 1$, which is strongly continuous for all $r\geq 0$,
and uniformly continuous for all $r> 0$. 
\item
More explicitly (see \cite[(2.72)]{MR1347689}) we have
\be
\norm{I^r w}_{L^1(0,1)}\leq \frac{1}{r\Gamma(r)}\norm{w}_{L^1(0,1)}.
\ee
\end{enumerate}
\end{theorem}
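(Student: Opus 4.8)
The plan is to realize each fractional integral as a one-sided convolution and to reduce every assertion to elementary properties of the convolution kernel. Taking the left-sided operator as representative (the right-sided case is identical up to reflection), I set $k_r(x):=x^{r-1}/\Gamma(r)$ for $x>0$ and observe that $(I^r w)(x)=\int_0^x k_r(x-t)\,w(t)\,dt$, i.e. $I^r w$ is the convolution of $w$, extended by zero outside $I$, with $k_r$, restricted to $I$. The semigroup law $I^r I^s=I^{r+s}$ then follows from $k_r*k_s=k_{r+s}$: for $r,s>0$ I would compute $I^rI^s w$ by Fubini (licit since the integrand is nonnegative after inserting $|w|$), interchange the integrals, and evaluate the inner integral $\int_t^x(x-\tau)^{r-1}(\tau-t)^{s-1}\,d\tau$ through the substitution $\tau=t+(x-t)\sigma$. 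This produces $(x-t)^{r+s-1}B(r,s)$, and the identity $B(r,s)=\Gamma(r)\Gamma(s)/\Gamma(r+s)$ collapses the double integral exactly to $(I^{r+s}w)(x)$ (with $I^0:=\mathrm{Id}$).

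For the quantitative bound I would estimate $\norm{I^r w}_{L^1(I)}$ by moving the absolute value inside, applying Fubini, and using $\int_t^1(x-t)^{r-1}\,dx=(1-t)^r/r$; bounding $(1-t)^r\le 1$ on $I$ yields $\norm{I^r w}_{L^1}\le\frac{1}{r\Gamma(r)}\norm{w}_{L^1}$. The same computation gives $\norm{k_r}_{L^1(I)}=1/(r\Gamma(r))=1/\Gamma(r+1)$, a quantity that stays bounded as $r\to 0^+$; this uniform $L^1$ bound on the kernels is the fact I will reuse below. For general $L^p(I)$ the operator bound $\norm{I^r w}_{L^p}\le\norm{k_r}_{L^1(I)}\norm{w}_{L^p}$ follows from Young's convolution inequality.

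The two continuity statements are then handled separately, since operator-norm continuity genuinely fails at $r=0$. For the uniform (operator-norm) continuity on $(0,\infty)$, I note that $I^r-I^{r'}$ is convolution with $k_r-k_{r'}$, so Young gives $\norm{I^r-I^{r'}}_{L^p\to L^p}\le\norm{k_r-k_{r'}}_{L^1(I)}$; it therefore suffices to show that $r\mapsto k_r$ is continuous into $L^1(I)$ on $(0,\infty)$, which I obtain from dominated convergence, the integrands near a fixed $r_0>0$ being dominated by an integrable power of $x$. For the strong continuity on $[0,\infty)$ the only new point is continuity at $r=0$, i.e. $\norm{I^r w-w}_{L^p}\to 0$ as $r\to 0^+$. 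Here I would argue that $\{k_r\}$ is a one-sided approximate identity concentrating at $0^+$: it enjoys the uniform $L^1$ bound above, its total mass is $\int_0^1 k_r=1/\Gamma(r+1)\to 1$, and for each $\delta\in(0,1)$ one has $\int_\delta^1 k_r=(1-\delta^r)/\Gamma(r+1)\to 0$. The standard approximate-identity argument (verify convergence on a dense class such as $C_c^\infty(I)$, then extend using the uniform bound) then gives strong continuity at $0$, and strong continuity at any $r_0>0$ follows either from the norm continuity above or from $I^{r_0+h}w-I^{r_0}w=\pm\, I^{\min(r_0,\,r_0+h)}(I^{|h|}w-w)$.

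I expect the main obstacle to be the strong continuity precisely at $r=0$: one must set up the approximate-identity estimate carefully for a singular, one-sided kernel on a bounded interval, and keep in mind that $r\mapsto I^r$ is deliberately not norm-continuous there (the generator being unbounded), so this step cannot be absorbed into the norm-continuity argument. By contrast, the Fubini interchanges and the Beta-function evaluation in the semigroup and bound steps are routine once the integrands are made nonnegative.
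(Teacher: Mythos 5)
The paper offers no proof of this statement: it is quoted directly from \cite[Theorem 2.6]{MR1347689} (together with estimate (2.72) there), so there is no internal argument to compare against. Your proof is correct and is essentially the standard one underlying that reference --- writing $I^r$ as convolution with $k_r(x)=x^{r-1}/\Gamma(r)$, getting the semigroup law from the Beta-function identity, the $L^1$ bound from Fubini, operator-norm continuity from $L^1$-continuity of $r\mapsto k_r$, and strong continuity at $r=0$ from the approximate-identity property of $\{k_r\}$ --- with the single caveat that the density step at $r=0$ requires $p<\infty$, which is the intended reading of the cited theorem.
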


\subsection{Real order (an)-isotropic total variation}\label{sec_real_order_atv}
We first recall the definition of $\ell^p$-\emph{Euclidean} norm. Given a point $x=(x_1,\ldots,x_N)\in\rn$, and $p\in[1,+\infty)$, the $\ell^p$-Euclidean norm of $x$ is 
\be
\abs{x}_p = \fsp{\sum_{i=1}^N \abs{x_i}^p}^{1/p}\text{ and }\abs{x}_\infty=\max\flp{\abs{x_i}:\,\,i=1,\ldots,N}.
\ee 
Note that for $p=2$, it coincides with the standard Euclidean norm $\abs{x}=\abs{x}_2$.\\\\
We next introduce the definition of real order (an)-isotropic total variation.
\begin{define}\label{isotropic_frac_variation}
We define the \emph{$r$-order total variation} $TV_{\ell^p}^r(u)$ of a function
 $u\in L^1(Q)$ as follows.
\begin{enumerate}[1.]
\item
For $r=s\in(0,1)$ (i.e. $\ir=0$), we define
\be\label{RLFOD}
TV_{\ell^p}^s(u):=\sup\flp{\int_Qu\, \divg^s \vp \,dx:\,\,\vp\in C_c^\infty(Q;\rn)\text{ and }\abs{\vp}_{\ell^p}^\ast\leq 1},
\ee  
where 
\be\label{scaled_divg}
\divg^s u:=[(1-1/N)s+1/N]\sum_{i=1}^N\partial^s_i\vp_i. 
\ee
\item
For $r=\ir+s$ where $\ir\geq1$, we define 
\be\label{RLFODr}
TV_{\ell^p}^r(u):=\sup\flp{\int_Qu\, \divg^s[\divg^\ir \vp] \,dx:\,\,\vp\in C_c^\infty(Q;\M^{N\times (N^{\ir})})\text{ and }\abs{\vp}_{\ell^p}^\ast\leq 1}.
\ee
\end{enumerate}
\end{define}

\begin{remark}\label{an_iso_equ}
We note that the norms $\abs{\cdot}_p$, $1\le p<\infty$, are all equivalent on $\rn$. That is, for any $1\leq q<p\leq \infty$, we have
\be\label{equivalent_p_norm}
\abs{x}_{\ell^p}\leq\abs{x}_{\ell^q}\leq N^{1/q-1/p}\abs{x}_{\ell^p}.
\ee
\end{remark}
\begin{theorem}[Lower semi-continuity with respect to a fixed order $r\in\R^+$]\label{weak_star_comp_s}
Given $u\in L^1(Q)$, and a sequence $\seqn{u_n}\subset BV^r(Q)$, satisfying one of the following conditions:
\begin{enumerate}[1.]
\item
$\seqn{u_n}$ is locally uniformly integrable and $u_n\to u$ a.e.,
\item or
$u_n\wtos u$ in $\mb(Q)$.
\end{enumerate}
Then we have
\be\label{liminf_s_weak}
\liminf_{n\to\infty} TV^r(u_n)\geq TV^r(u).
\ee
\end{theorem}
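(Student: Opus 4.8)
The plan is to exploit the fact that, by its very definition \eqref{RLFOD}--\eqref{RLFODr}, the functional $TV^r$ is a pointwise supremum of functionals that are \emph{linear} in $u$. For each admissible test field $\vp\in C_c^\infty(Q;\M^{N\times(N^{\ir})})$ with $\abs{\vp}_{\ell^p}^\ast\leq 1$, set $\psi_\vp:=\divg^s[\divg^\ir\vp]$ and define
\be
L_\vp(u):=\int_Q u\,\psi_\vp\,dx,
\ee
so that $TV^r(u)=\sup_\vp L_\vp(u)$. Since a supremum of (sequentially) lower semi-continuous functionals is again lower semi-continuous, it suffices to prove that, for each fixed $\vp$, the linear functional $L_\vp$ is continuous along both types of converging sequences. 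Granting this, for every fixed admissible $\vp$ one has
\be
\liminfn TV^r(u_n)\geq \liminfn L_\vp(u_n)=L_\vp(u),
\ee
and taking the supremum over all admissible $\vp$ yields \eqref{liminf_s_weak}. The one structural ingredient common to both cases that I would record first is that $\psi_\vp$ is a \emph{fixed} (independent of $n$) bounded and continuous function on $Q$: indeed $\divg^\ir\vp$ is again smooth and compactly supported, and the Riemann--Liouville fractional divergence of such a field is bounded and continuous on the bounded domain $Q$, which is precisely where the boundedness estimate of Theorem \ref{semigroup_frac_int} and the linearity \eqref{linearity_frac} enter.

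For Case 1 (local uniform integrability together with $u_n\to u$ a.e.), I would simply invoke the Vitali convergence theorem. Because $Q$ is bounded, local uniform integrability upgrades to uniform integrability; since $\psi_\vp$ is bounded and $u_n\psi_\vp\to u\psi_\vp$ almost everywhere, the sequence $\flp{u_n\psi_\vp}$ is uniformly integrable and converges a.e., whence $L_\vp(u_n)=\int_Q u_n\psi_\vp\,dx\to \int_Q u\,\psi_\vp\,dx=L_\vp(u)$. This delivers the required continuity with no further difficulty.

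Case 2 (weak-$*$ convergence in $\mb(Q)$) is where the main obstacle lies. Continuity of $L_\vp$ is exactly the assertion $\int_Q\psi_\vp\,du_n\to\int_Q\psi_\vp\,du$, which is immediate if $\psi_\vp$ is an admissible test function for weak-$*$ convergence, i.e. $\psi_\vp\in C_0(Q)$. However, because the R--L derivative is \emph{nonlocal}, $\divg^s[\divg^\ir\vp]$ need not vanish on $\partial Q$ even though $\vp$ does; thus $\psi_\vp$ is in general only bounded and continuous on $\overline Q$, and not in $C_0(Q)$, so the weak-$*$ pairing does not apply verbatim. I would resolve this using the zero boundary conditions imposed throughout on functions in $BV^r(Q)$ (see Section \ref{the_greatest_saver}), which prevent mass from concentrating at $\partial Q$: combined with the uniform total-variation bound $\sup_n\norm{u_n}_{\mb(Q)}<+\infty$ supplied by weak-$*$ convergence, one approximates $\psi_\vp$ uniformly by functions in $C_0(Q)$ and controls the resulting boundary error by this uniform mass bound, thereby restoring $L_\vp(u_n)\to L_\vp(u)$. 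Establishing this boundary control rigorously — equivalently, checking that the fractional divergence test functions pair continuously with the weak-$*$ limit despite their non-vanishing boundary traces — is the step I expect to require the most care, and it is the point at which the zero boundary conditions of the setup become essential.
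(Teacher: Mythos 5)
The paper itself contains no proof of this theorem---it is collected as a preliminary from Part I of the series---so I can only assess your argument on its own terms. Your skeleton (write $TV^r(u)=\sup_\vp\int_Q u\,\divg^s[\divg^\ir\vp]\,dx$ as a supremum of linear functionals $L_\vp$ and prove continuity of each $L_\vp$ along the given sequences) is the natural one, and your structural observation that $\psi_\vp:=\divg^s[\divg^\ir\vp]$ is bounded and continuous on $\overline Q$ but, owing to the nonlocality of the Riemann--Liouville derivative, \emph{not} in $C_0(Q)$, is correct and is indeed the crux. The trouble is that both of your cases then founder on exactly this point, and you only notice it in one of them. In Case 1 the claim that local uniform integrability upgrades to uniform integrability because $Q$ is bounded is false: on $Q=(0,1)$ the sequence $u_n=n\,\mathbf{1}_{(1-1/n,1)}$ is locally uniformly integrable (it vanishes eventually on every compact subset) and converges to $0$ a.e., yet is not uniformly integrable. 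Vitali therefore yields only $u_n\to u$ in $L^1_{\loc}(Q)$, which controls $\int_K u_n\psi_\vp\,dx$ for compact $K\subset Q$ but not the contribution near $\partial Q$, where $\psi_\vp$ need not vanish. In Case 2 the intermediate claim $L_\vp(u_n)\to L_\vp(u)$ is actually \emph{false} under the stated hypothesis: with the same $u_n$ one has $u_n\wtos 0$ in $\mb((0,1))$, while $\int_0^1 u_n\psi_\vp\,dx\to\psi_\vp(1^-)$, which is generically nonzero. So the step you defer cannot be carried out as you have set it up.

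Moreover, the repair you propose---invoking ``the zero boundary conditions imposed throughout on functions in $BV^r(Q)$''---is not available here: the hypotheses place $u_n$ only in $BV^r(Q)=\flp{u\in L^1(Q):\,TV^r(u)<+\infty}$, which carries no boundary condition, and even a zero trace would not prevent $\abs{u_n}$ from concentrating mass at $\partial Q$ along a merely weak-$*$ convergent sequence. To close the argument you need a genuinely new ingredient that you do not supply: either a one-sided estimate showing that the boundary discrepancy cannot increase $\int_Q u\,\psi_\vp\,dx$ beyond $\liminf_n TV^r(u_n)$ (for instance by truncating $\vp$ away from $\partial Q$ and quantifying the resulting error in $\psi_\vp$ via the bounds of Theorem \ref{semigroup_frac_int}, exploiting that the supremum is taken over \emph{all} admissible $\vp$ and over $\pm\vp$), or an explicit tightness/no-concentration argument. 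As written, the proof is incomplete in both branches, and the auxiliary claim in Case 1 is incorrect.
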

\begin{theorem}[Approximation by smooth functions \cite{liulud2019Image}]\label{approx_smooth}
Given $r\in\R^+$ and $u\in BV^r(Q)\cap H_0^{\ir+1}(Q)$, then there exists a sequence $\seqn{u_n}\subset   C^\infty(\bar Q)$ such that 
\be
u_n\to u\text{ in }L^1(Q)\text{ and }\limn {TV^r}(u_n)={TV^r}(u).
\ee
\end{theorem}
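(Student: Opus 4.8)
The plan is to take $u_\e$ to be mollifications of $u$ and to establish the two one-sided bounds $\liminf_{\e\to0}TV^r(u_\e)\geq TV^r(u)$ and $\limsup_{\e\to0}TV^r(u_\e)\leq TV^r(u)$; together with $u_\e\to u$ in $L^1(Q)$ these give the claim. The lower bound costs nothing: once we know $u_\e\to u$ in $L^1(Q)$, hence a.e.\ along a subsequence and with the family locally uniformly integrable, Theorem \ref{weak_star_comp_s} applies directly.

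Concretely, I would extend $u$ by zero to $\tilde u$ on $\R^N$. Since $u\in H_0^{\ir+1}(Q)$, all weak derivatives of $u$ up to order $\ir$ vanish on $\partial Q$, so $\tilde u\in H^{\ir+1}(\R^N)$ and, crucially, $u$ together with those derivatives is small in an $\e$-neighborhood of $\partial Q$. Fixing a nonnegative radial mollifier $\rho\in C_c^\infty(\R^N)$ of unit mass and writing $\rho_\e(x):=\e^{-N}\rho(x/\e)$, I set $u_\e$ equal to the restriction to $\bar Q$ of $\tilde u*\rho_\e$, so that $u_\e\in C^\infty(\bar Q)$ and $u_\e\to u$ in $L^1(Q)$ and a.e.

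For the upper bound I would exploit the convolution structure of the R-L operators. On functions extended by zero, the fractional integral $I^{1-s}$ is convolution with the one-sided power kernel $t\mapsto t_+^{-s}/\Gamma(1-s)$, so each one-dimensional R-L derivative $d^s=\frac{d}{dx}\circ I^{1-s}$ is a convolution followed by a classical derivative, and both commute with convolution against $\rho_\e$. Given an admissible field $\vp$ with $\abs{\vp}_{\ell^p}^\ast\leq1$, I transfer the mollifier onto the test field,
\be
\int_Q u_\e\,\divg^s[\divg^\ir\vp]\,dx=\int_Q u\,\divg^s[\divg^\ir(\check\rho_\e*\vp)]\,dx+R_\e(\vp),
\ee
where $\check\rho_\e(x):=\rho_\e(-x)$ and $R_\e(\vp)$ collects the boundary-layer terms produced by the non-locality of the R-L derivative and by the support of $\check\rho_\e*\vp$ possibly reaching $\partial Q$. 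The convolved field $\check\rho_\e*\vp$ is again admissible, since convolution with a probability kernel does not increase the pointwise $\ell^p$-dual norm by Jensen's inequality, so the first term is at most $TV^r(u)$; and the correction $R_\e(\vp)$ is supported in an $\e$-neighborhood of $\partial Q$ and is controlled, uniformly in $\vp$, by the smallness of $u$ and its derivatives near $\partial Q$, hence $\sup_\vp R_\e(\vp)\to0$. Taking the supremum over $\vp$ and then $\e\to0$ gives $\limsup_{\e\to0}TV^r(u_\e)\leq TV^r(u)$, and combining with the lower bound yields $\limn TV^r(u_n)=TV^r(u)$ for $u_n:=u_{1/n}$.

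The step I expect to be the main obstacle is the estimate $\sup_\vp R_\e(\vp)\to0$ on the boundary layer. The difficulty is structural: because the R-L derivative is non-local and not translation invariant (its lower integration limit is pinned at the endpoint of $Q$), the mollified function and the commutation identity both differ from their ``intrinsic'' counterparts precisely within distance $\e$ of $\partial Q$, and these discrepancies must be shown to be negligible. This is exactly where the hypothesis $u\in H_0^{\ir+1}(Q)$ is used, guaranteeing that enough derivatives of $u$ vanish on $\partial Q$ for the boundary-layer contributions to vanish in the limit. The remaining ingredients — the convergence $u_\e\to u$, the commutation away from $\partial Q$, and the Jensen bound on the dual norm — are routine.
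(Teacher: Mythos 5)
First, a point of comparison: the paper does not prove this theorem here — it is imported verbatim from Part I of the series (\cite{liulud2019Image}) — so there is no in-paper proof to measure your argument against. Judged on its own terms, your strategy (mollify the zero-extension of $u$, get the $\liminf$ bound for free from lower semicontinuity via Theorem \ref{weak_star_comp_s}, and get the $\limsup$ bound by transferring the mollifier onto the test field) is the standard and almost certainly the intended route; the $\liminf$ half and the convergence $u_\e\to u$ in $L^1(Q)$ are complete as written.

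The gap is exactly where you flag it, and it is not filled. Your identity transferring $\check\rho_\e$ onto $\vp$ has two problems that you assert away rather than estimate. First, $\check\rho_\e*\vp$ need not be admissible: Jensen's inequality controls the pointwise $\ell^p$-dual norm, but a field $\vp\in C_c^\infty(Q;\M^{N\times(N^{\ir})})$ convolved with $\check\rho_\e$ generally fails to be compactly supported in $Q$, so you cannot bound the main term by $TV^r(u)$ without first cutting off or dilating the test field, which reintroduces error terms of the same nature as $R_\e$. Second, because the Riemann--Liouville operator's lower integration limit is pinned at $\partial Q$, the commutator between $d^s$ on $Q$ and convolution is \emph{not} supported in an $\e$-neighbourhood of $\partial Q$: in one dimension the discrepancy is of the form $\frac{1}{\Gamma(1-s)}\int_{-\e}^0(x-t)^{-s}\,(\tilde w*\rho_\e)(t)\,dt$, which is felt at every interior point $x$. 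It is indeed controlled by the $L^1$-mass of $\tilde w*\rho_\e$ on the exterior collar, which tends to zero thanks to the vanishing traces of $u$ and its derivatives up to order $\ir$ — but this uniform-in-$\vp$ quantitative estimate is precisely what the whole upper bound rests on, and the proposal contains no such estimate, only the statement that the discrepancies ``must be shown to be negligible.'' As it stands, the decisive inequality $\limsup_{\e\to0}TV^r(u_\e)\leq TV^r(u)$ is a plan rather than a proof.
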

We close this section with the following important compactness theorem.
\begin{theorem}[Compact embedding of real order bounded variation space, \cite{liulud2019Image}]\label{compact_lsc_r}
Given $p>1$, and sequences $\seqn{r_n}\subset \R^+$ and $\seqn{u_n}\subset L^1(Q)$, such that $r_n\to r\in\R^+$ and
\be
\sup\flp{\norm{u_n}_{L^p(Q)}+TV^{r_n}(u_n):\,\,n\in\N}<+\infty,
\ee
then the following two assertions hold:
\begin{enumerate}[1.]
\item
there exists $u\in BV^r(Q)$ and, up to a subsequence, $u_n\wto u$ in $L^p(Q)$ and
\be
\liminf_{n\to\infty} TV^{r_n}(u_n)\geq TV^r(u)
\ee
\item
If in addition we assume that $\norm{u_n}_{L^\infty(Q)}$ is uniformly bounded, then we have
\be
\text{ $u_n\to u$ strongly in $L^1(Q)$}.
\ee
\end{enumerate}
\end{theorem}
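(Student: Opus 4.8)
The plan is to establish the two assertions in turn: first extract a weak-$L^p$ limit and prove the lower-semicontinuity inequality by passing to the limit inside the dual (supremum) definition of $TV^{r_n}$, in the spirit of the fixed-order statement Theorem \ref{weak_star_comp_s}; then upgrade weak convergence to strong $L^1$ convergence by a fractional Rellich--Kondrachov argument. Throughout I write $r_n=\lfloor r_n\rfloor+s_n$.

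\textbf{Assertion 1.} Since $p>1$, the space $L^p(Q)$ is reflexive, so the uniform bound $\sup_n\norm{u_n}_{L^p(Q)}<+\infty$ together with Banach--Alaoglu yields a subsequence (not relabelled) and a limit $u\in L^p(Q)$ with $u_n\wto u$ in $L^p(Q)$. For the liminf inequality I would pass to the limit in the supremum defining $TV^{r_n}$. The crucial ingredient is continuity of the fractional differential operators in the order: for a \emph{fixed} admissible field $\vp\in C_c^\infty(Q;\M^{N\times(N^{\ir})})$ with $\abs{\vp}_{\ell^p}^\ast\le1$, I claim that, with $p'$ the conjugate exponent,
\be
\divg^{s_n}[\divg^{\ir}\vp]\longrightarrow \divg^{s}[\divg^{\ir}\vp]\quad\text{strongly in }L^{p'}(Q),
\ee
which follows from the smoothness and compact support of $\vp$ together with the strong continuity of the Riemann--Liouville integration semigroup from Theorem \ref{semigroup_frac_int}. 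Coupling this strong convergence of the integrand with $u_n\wto u$ in $L^p(Q)$ gives
\be
\liminf_{n\to\infty}TV^{r_n}(u_n)\ge\liminf_{n\to\infty}\int_Q u_n\,\divg^{s_n}[\divg^{\ir}\vp]\,dx=\int_Q u\,\divg^{s}[\divg^{\ir}\vp]\,dx,
\ee
and taking the supremum over all such $\vp$ produces $\liminf_n TV^{r_n}(u_n)\ge TV^r(u)$; finiteness of the left-hand side then places $u\in BV^r(Q)$.

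\textbf{The main obstacle} is that the integer part $\lfloor r_n\rfloor$ need not stabilise when the limit order $r$ is itself a positive integer: approaching from below forces $\lfloor r_n\rfloor=\lfloor r\rfloor-1$ with $s_n\to1$, whereas approaching from above gives $\lfloor r_n\rfloor=\lfloor r\rfloor$ with $s_n\to0$, and the competitor spaces $C_c^\infty(Q;\M^{N\times(N^{\lfloor r_n\rfloor})})$ then differ. I would resolve this by splitting the subsequence into the regimes $r_n<r$, $r_n=r$, $r_n>r$ and analysing each separately; in the ``from below'' regime the identity $\divg\,\divg^{\ir-1}=\divg^{\ir}$ has to be recovered as the limit of $\divg^{s_n}\divg^{\ir-1}$ as $s_n\to1$, which is precisely where continuity across the integer threshold of the operators must be verified. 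Away from integer $r$ the integer part is eventually constant and this difficulty disappears.

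\textbf{Assertion 2.} Assume now that $\norm{u_n}_{L^\infty(Q)}$ is uniformly bounded. I would deduce precompactness in $L^1(Q)$ from the uniform fractional regularity encoded in the bound on $TV^{r_n}(u_n)$. Since $r_n\to r$, the orders are eventually bounded below by a fixed $\delta>0$ (the borderline case $r=0$ being degenerate and treated directly); using the integral representation of the fractional derivatives together with the semigroup estimate of Theorem \ref{semigroup_frac_int}(2), I would establish a uniform-in-$n$ control of translates,
\be
\norm{u_n(\cdot+h)-u_n}_{L^1(Q')}\le C\,\abs{h}^{\theta}\,\bigl(1+TV^{r_n}(u_n)\bigr),\qquad Q'\Subset Q,
\ee
for some exponent $\theta=\theta(\delta)>0$. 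Combined with the uniform $L^\infty$ bound (hence uniform integrability) this verifies the hypotheses of the Riesz--Fr\'echet--Kolmogorov criterion and yields a further subsequence converging strongly in $L^1(Q)$; by uniqueness of limits the strong limit coincides with the weak-$L^p$ limit $u$ of Assertion~1, completing the proof. The delicate point here is again the dependence of the translation modulus on the varying order $r_n$, which is controlled through the lower bound $r_n\ge\delta$.
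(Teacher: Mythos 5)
This theorem is imported verbatim from Part I of the series (\cite{liulud2019Image}); the present paper states it without proof, so there is no in-paper argument to compare against. Judged on its own terms, your skeleton --- reflexivity of $L^p$ to extract $u_n\rightharpoonup u$, lower semicontinuity via the duality pairing of weak $L^p$ convergence of $u_n$ against strong $L^{p'}$ convergence of $\operatorname{div}^{s_n}[\operatorname{div}^{\lfloor r\rfloor}\varphi]$ for fixed $\varphi$, and a Riesz--Fr\'echet--Kolmogorov argument for Assertion~2 --- is the natural architecture for a statement of this type, and the $L^{p'}$ convergence of the test integrand does follow from the strong continuity of $I^{1-s_n}$ applied to the fixed smooth field, as you say.

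There are, however, two places where what you have is a plan rather than a proof. First, the integer-crossing case in Assertion~1: you correctly identify that when $r\in\N$ and $r_n\nearrow r$ the competitor class changes ($\lfloor r_n\rfloor=\lfloor r\rfloor-1$, $s_n\to1$), but you do not carry out the verification. The issue is not merely continuity of the operators across the threshold; one must show that testing against $\operatorname{div}^{s_n}\operatorname{div}^{\lfloor r\rfloor-1}\varphi$ with $\varphi\in C_c^\infty(Q;\M^{N\times(N^{\lfloor r\rfloor-1})})$ recovers, in the limit, a family of integrands rich enough to reproduce the full supremum defining $TV^{\lfloor r\rfloor}(u)$, whose competitors live in the larger space $C_c^\infty(Q;\M^{N\times(N^{\lfloor r\rfloor})})$ with a constraint on a different norm. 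This requires an explicit identification of $\lim_{s_n\to1}\operatorname{div}^{s_n}\psi$ with $\operatorname{div}\psi$ together with a density/approximation step, none of which is supplied. Second, and more substantively, the translation estimate $\norm{u_n(\cdot+h)-u_n}_{L^1(Q')}\le C\abs{h}^{\theta}(1+TV^{r_n}(u_n))$ is the entire content of the compactness assertion, and it is asserted rather than derived. For the Riemann--Liouville framework this step is delicate: one must represent $u_n$ as $I^{r_n}$ of its (measure-valued) $r_n$-order derivative plus the kernel terms of the RL calculus, estimate the $L^1$ modulus of the shifted kernel $t\mapsto (x+h-t)_+^{r_n-1}-(x-t)_+^{r_n-1}$ uniformly in $r_n\ge\delta$, and control the boundary singularities that the RL representation introduces. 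Until that estimate is written out, Assertion~2 is not proved. The use of the $L^\infty$ bound to handle the boundary layer $Q\setminus Q'$ is correct, but it is the easy half of the Kolmogorov criterion.
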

\subsection{Consideration for numerical implementation}\label{the_greatest_saver}
In this section we present several necessary restrictions on the boundary conditions,
to deal with singularities arising from the definition of fractional order derivative, as well as from the application of numerical implementations.
\subsubsection{Smoothness for regularizers}
For the numerical implementation of the
 underlying variational problem in \eqref{mumfordshahori233}, 
we follow the smoothness structure via \emph{Huber}-regularization,
proposed in \cite[Section 2.2]{reyes2015structure}.\\\\
The Huber-regularization is usually carried out by a $L^2$-penalty on the regularizer. That is, it induces a $H^1$ regularization on the 
$TV$ seminorm. In general, the regularizer is a convex, proper, and lower semicontinuous smoothing functional $H: L^1(Q)\to[0,+\infty]$, satisfying the following assumption.
\begin{assumption}[{see \cite[Assumptions A-H, Section 2.2]{reyes2015structure}}]
We assume that $0\in\operatorname{dom}H$, and for every $\delta>0$, there exists $u^\delta\in L^1(Q)$ such that
\be
H(u^\delta)<+\infty\text{ and } \mathcal I(u^\delta)\leq \mathcal I(u)+\delta.
\ee
\end{assumption}
\subsubsection{Boundary condition on the fractional order derivative}
We have seen from Section \ref{sec_fractional_derivative} that the RL fractional order derivatives require boundary conditions. 
In particular, if the function is not vanishing on the boundary, then singularities might arise in numerical simulation, since the computations at the inner nodes require such values. 
However, such conditions are often impractical in imaging applications, and inaccurate boundary conditions can easily lead to oscillations near the boundaries. 
Therefore, a proper treatment of the boundary conditions for problems involving fractional order derivatives is crucial.\\\\
In \cite[Section 4]{zhang2015total} the authors presented a method to reduce nonzero boundary conditions to zero boundary conditions, 
so that numerical algorithms become applicable. For reader's convenience, we report the one dimension construction. Given a function $u\in BV(I)$, with $I=(0,1)$ denoting the unit interval, such that 
\be
u(0)=a\text{ and }u(1)=b,
\ee
we introduce an auxiliary function 
\be
e(x):=a(1-x)+bx,
\ee
and set $\bar u(x)=u(x)-e(x)$. Thus, we have
\be
\bar u\lfloor_{\partial I}=0\text{ and }\bar u'\lfloor_{\partial I}=0,
\ee
where the zero Neumann boundary condition is imposed by artificially extending the boundary values. That is, $e'(0)=e'(1)=0$ on $\partial I$. 
The construction in two dimensions can be carried out in a similar spirit, after accurate estimates of $u(x_1,x_2)$ at the corners and edges are obtained. 
We refer to \cite[Section 4, Remark 4]{zhang2015total} for further details.\\\\
Later in this article, we shall also see that such boundary conditions are naturally compatible with the observations from Remark \ref{good_condition}.
\section{Generalization of $TV$-type regularization and $\Gamma$-convergence}\label{sec_bilevel}
Let $u_\eta$ and $u_c\in L^2(Q)$ be the corrupted and clean images respectively. For future reference, we will refer to such pairs $(u_c,u_\eta)$ as \emph{training pairs}. 
\begin{remark}
Based on the observations from Section \ref{the_greatest_saver}, we restrict our discussion to functions $u$ such that both $u$ and its derivatives, up to a maximal order based on the order of derivative used on $u$, are
vanishing on the boundary. 
For example, in \cite{zhang2015total}, the order $r\in\R^+$ of $TV^r(u)$ satisfied $r\in(1,2)$, and hence, only functions $u\in H_0^2(Q)$ are considered. In general, in this paper, we consider only function
\be\label{general_bas_image_assump}
\text{$u\in H_0^{\ir+1}(Q)$ for a given order of derivative $r\in\R^+$.}
\ee
We will see later that such zero boundary conditions are compatible with fractional order derivatives.
\end{remark}
\begin{remark}
The term that $\kappa {H^{\ir+1}(u)}$ is added as the Huber-regularization, where $\kappa>0$ is a (small) fixed parameter. 
The zero boundary condition is enforced by restricting $u$ to the space $H_0^{\ir+1}(Q)$. For example, when $1<r< 2$, we have $u\in H_0^2(Q)$, 
which gives the desired zero Dirichlet and Neumann boundary conditions, used in \cite{zhang2015total}. As $r$ increases, the zero boundary conditions given by the space $H_0^{\ir+1}(Q)$
naturally matches with the corresponding \emph{PDE} problem with order $2\ir$.
\end{remark}
We next recall the definition of representable functions.
\begin{define}[Representable functions]\label{frac_represent_I}
We denote by $\mathbb I^r(L^1(I))$, $r>0$, the space of functions $f$ represented by the $r$-order derivative of a summable function. That is,
\be
\mathbb I^r(L^1(I)):=\flp{f\in L^1(I):\,\, f=\mathbb I^rw,\,\,w\in L^1(I)}.
\ee
\end{define}
Next we recall several theorems on representable functions in one dimension, from \cite{MR1347689}.
\begin{theorem}\label{thm_MR1347689}
For reader's convenience, we use a unified notation, by writing $\mathbb I^r = d^{-r}$, $r>0$.
\begin{enumerate}[1.]
\item\label{cite_represent_frac}
{\cite[Theorem 2.3]{MR1347689}}
Condition $w(x)\in \mathbb I^r(L^1(I))$, $r>0$, is equivalent to
\be
\mathbb I^{\ir+1-r}[w](x)\in W^{\ir+1,1}(I),\qquad r=\ir+s 
\ee
and
\be
(d^l \mathbb I^{\ir+1-r}[w])(0)=0, \qquad l=0,1,\cdots,\ir.\label{bdy_frac_cond}
\ee
\item\label{MR1347689T2_5}
{\cite[Theorem 2.5]{MR1347689}}
Let $w\in L^1(I)$ be given. The relation
\be
\mathbb I^{r_1} \mathbb I^{r_2} w=\mathbb I^{{r_1}+r_2}w
\ee
is valid if one of the following conditions hold:
\begin{enumerate}[1.]
\item
$r_2>0$, ${r_1}+r_2>0$, provided that $w\in L^1(I)$,
\item
$r_2<0$, ${r_1}>0$, provided that $w\in \mathbb I^{-r_2}(L^1(I))$,
\item
${r_1}<0$, ${r_1}+r_2<0$, provided that $w\in \mathbb I^{-{r_1}-r_2}(L^1(I))$.
\end{enumerate}
\item 
{\cite[Theorem 2.6]{MR1347689}}
Let $r\in\R^+$ be given.
\begin{enumerate}[1.]
\item
The fractional order integral operators $\mathbb I^r$ form a semigroup on $L^p(I)$, $p\geq 1$, which is continuous in the uniform topology for all $r> 0$,
and strongly continuous for all $r\geq 0$. 
\item
It holds (see \cite[(2.72)]{MR1347689}) 
\be
\norm{\mathbb I^r w}_{L^1(a,b)}\leq (b-a)^r\frac{1}{r\Gamma(r)}\norm{w}_{L^1(a,b)}.
\ee
\end{enumerate}
\end{enumerate}
\end{theorem}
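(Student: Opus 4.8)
The plan is to treat this as a collection of classical facts about the Riemann--Liouville operators and to reduce all three parts to two genuinely load-bearing ingredients: the Beta-function identity underlying the index law, and the $L^1$-injectivity of $\mathbb{I}^r$. Parts (1) and (3) are then largely consequences of the index law in part (2) together with elementary kernel estimates, so I would establish part (2) (positive indices) and the norm bound in part (3) first, by direct manipulation of the kernel $\frac{1}{\Gamma(r)}(x-t)^{r-1}$, and only afterwards deduce the representability characterization of part (1).

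For the norm bound I would start from $\mathbb{I}^r w(x)=\frac{1}{\Gamma(r)}\int_a^x (x-t)^{r-1}w(t)\,dt$, bound $\abs{\mathbb{I}^r w}$ pointwise, integrate over $(a,b)$, and apply Tonelli (legitimate since $w\in L^1$ and the kernel is nonnegative) to swap the order of integration; the inner integral $\int_t^b (x-t)^{r-1}\,dx=(b-t)^r/r\le (b-a)^r/r$ then yields $\norm{\mathbb{I}^r w}_{L^1(a,b)}\le \frac{(b-a)^r}{r\Gamma(r)}\norm{w}_{L^1(a,b)}$. Strong continuity in $r$ would follow by approximating $w$ in $L^1$ by continuous functions and applying dominated convergence to the kernel $(x-t)^{r-1}/\Gamma(r)$, which is continuous in $r$; the uniform (operator-norm) continuity for $r>0$ requires estimating $\norm{\mathbb{I}^{r}-\mathbb{I}^{r'}}$ via the $L^1$ size of the difference kernel. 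For the index law $\mathbb{I}^{r_1}\mathbb{I}^{r_2}=\mathbb{I}^{r_1+r_2}$ with $r_1,r_2>0$ I would write out the double integral, use Tonelli again, and evaluate the inner integral $\int_\tau^x (x-t)^{r_1-1}(t-\tau)^{r_2-1}\,dt$ by the substitution $t=\tau+(x-\tau)\sigma$, recognizing $B(r_2,r_1)=\Gamma(r_1)\Gamma(r_2)/\Gamma(r_1+r_2)$; the factors $\Gamma(r_1)\Gamma(r_2)$ cancel and leave exactly $\mathbb{I}^{r_1+r_2}w$.

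For part (1), note first that $\ir+1-r=1-s$. In the forward direction, if $w=\mathbb{I}^r\phi$ with $\phi\in L^1$, the index law gives $\mathbb{I}^{1-s}w=\mathbb{I}^{\ir+1}\phi$; since $\mathbb{I}^{\ir+1}$ is ordinary $(\ir+1)$-fold iterated integration based at $0$, this function lies in $W^{\ir+1,1}(I)$ and has all derivatives up to order $\ir$ vanishing at $0$, which is exactly \eqref{bdy_frac_cond}. Conversely, given $g:=\mathbb{I}^{1-s}w\in W^{\ir+1,1}$ with $d^l g(0)=0$ for $l\le\ir$, repeated integration yields $g=\mathbb{I}^{\ir+1}(d^{\ir+1}g)$, so setting $\phi:=d^{\ir+1}g\in L^1$ and using $\ir+1=(1-s)+r$ together with the index law gives $\mathbb{I}^{1-s}w=\mathbb{I}^{1-s}(\mathbb{I}^r\phi)$; cancelling $\mathbb{I}^{1-s}$ then produces $w=\mathbb{I}^r\phi\in\mathbb{I}^r(L^1(I))$.

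The hardest step is precisely this last cancellation, namely the injectivity of $\mathbb{I}^\alpha$ on $L^1(I)$ for $\alpha=1-s\in(0,1]$: $\mathbb{I}^\alpha\psi=0$ must force $\psi=0$. This is a uniqueness statement of Titchmarsh-convolution type and is the one genuinely non-elementary ingredient; I would either cite it directly or derive it from the Titchmarsh convolution theorem applied to the kernel $t_+^{\alpha-1}$. The negative-index cases (b) and (c) of part (2) inherit the same delicacy, since they implicitly assert that the fractional derivative $\mathbb{I}^{-r}=d^r$ left-inverts the fractional integral on the representable subspace; I would handle these by restricting $w$ to the domain $\mathbb{I}^{\abs{\cdot}}(L^1(I))$ dictated by the hypotheses and reducing to the already-established positive-index case via the characterization of part (1). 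Everything outside the injectivity input is Tonelli, the Beta integral, and elementary estimates, and is routine.
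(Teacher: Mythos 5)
This theorem is not proved in the paper at all: it is a verbatim citation of Theorems 2.3, 2.5 and 2.6 of the reference \cite{MR1347689} (Samko--Kilbas--Marichev), so there is no in-paper argument to compare yours against. Judged on its own, your reconstruction follows the standard textbook route and is essentially sound: the Tonelli computation for the $L^1$ bound, the Beta-function evaluation of $\int_\tau^x (x-t)^{r_1-1}(t-\tau)^{r_2-1}\,dt$ for the positive-index semigroup law, the forward direction of part (1) via $\mathbb I^{1-s}w=\mathbb I^{\ir+1}\phi$, and the converse via Taylor's formula with integral remainder are all correct, and you correctly identify that the whole converse hinges on cancelling $\mathbb I^{1-s}$, i.e.\ on injectivity of $\mathbb I^{\alpha}$ on $L^1(I)$.

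Two remarks. First, you over-engineer the one step you flag as ``genuinely non-elementary'': Titchmarsh's convolution theorem is not needed. Once the positive-index law $\mathbb I^{r_1}\mathbb I^{r_2}=\mathbb I^{r_1+r_2}$ is in hand, $\mathbb I^{\alpha}\psi=0$ with $\alpha\in(0,1]$ implies $\mathbb I^{1}\psi=\mathbb I^{1-\alpha}\mathbb I^{\alpha}\psi=0$, i.e.\ $\int_0^x\psi\,dt=0$ for a.e.\ $x$, whence $\psi=0$ a.e.; this reduces the injectivity to that of ordinary integration and keeps the entire proof at the level of Tonelli plus the Beta integral. The same composition trick is the clean way to handle the negative-index cases (b) and (c) of part (2). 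Second, your justification of strong continuity of $r\mapsto\mathbb I^r$ at the endpoint $r=0$ is slightly misstated: the kernel $u^{r-1}/\Gamma(r)$ does not converge pointwise to an integrable limit as $r\to0^+$, so dominated convergence on the kernel does not apply there; what is needed is the approximate-identity argument (mass $x^r/\Gamma(r+1)\to1$ concentrating at $u=0$) on a dense class of continuous $w$, combined with the uniform operator-norm bound you already derived. Neither point is a gap in substance, but both would need to be fixed in a written-out version.
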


\begin{remark}\label{good_condition}
Most of the results from \cite{MR1347689} are about functions in $\mathbb I^r(L^1(I))$. In view of 
Theorem \ref{thm_MR1347689}, Assertion 1,
this is equivalent to assuming \eqref{bdy_frac_cond}.
 A possible sufficient condition is, for instance, \eqref{general_bas_image_assump}.
\end{remark}

\subsection{Extension with primal extension}\label{sec_extension_primal}
In the following, by ``primal'' we will mean ``directly on $u$''.

\subsubsection{Extension of $TV$ with real order derivative}\label{sec_bilevel_TVr}
Let $r\in\R^+$ be given, and write $r=\ir+s$. We recall the definition of $r$-order total variation:
\be
TV^r(u):=\sup\flp{\int_Qu\, [\divg^s\divg^\ir \vp] \,dx:\text{ for }\vp\in C_c^\infty(Q;\M^{N\times (N^{\ir})})\text{ and }\abs{\vp}\leq 1}.
\ee
Next, we introduce the following functional $I_{\alpha,r}^\kappa(u)$.

\begin{define}
\label{def:fractional TVr functional}
Let $r\in[0,+\infty)$ and $\alpha\in \R^+$ be given. We define the functional $\I_{\alpha,r}^\kappa:L^1\to [0,+\infty]$ by
\be
\mathcal I_{\alpha,r}^\kappa(u):=
\begin{cases}
\norm{u-u_\eta}_{L^2(Q)}^2+\alpha TV^{r}(u)+\kappa H^{\ir+1}(u)&\text{ for }u\in H_0^{\ir+1}(Q),\\
+\infty&\text{ otherwise }.
\end{cases}
\ee
\end{define}

We first show that for every given $u_\eta\in L^2(Q)$, the minimizing problem associated with $\mathcal I^\kappa_{\alpha,r}$ admits a unique solution.

\begin{proposition}\label{unique_exist_lower}
Let $u_\eta\in L^2(Q)$ and $(\alpha,r)\in\T$ be given. Then, there exists a unique $u_{\alpha,\B}\in H_0^{\ir+1}(Q)$ such that
\be
u_{\alpha,r}\in \argmin\flp{\mathcal I_{\alpha,r}^\kappa(u):\, u\in H^{\ir+1}_0(Q)}.
\ee
\end{proposition}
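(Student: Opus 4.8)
The plan is to proceed by the direct method of the calculus of variations for existence, and to exploit strict convexity of the data-fidelity term for uniqueness. First I would observe that $\mathcal I_{\alpha,r}^\kappa$ is proper and bounded below: it is nonnegative, and testing with $u=0\in H_0^{\ir+1}(Q)$ gives $\mathcal I_{\alpha,r}^\kappa(0)=\norm{u_\eta}_{L^2(Q)}^2+\kappa H^{\ir+1}(0)<+\infty$, since $0\in\operatorname{dom}H$. Hence the infimum $m:=\inf\{\mathcal I_{\alpha,r}^\kappa(u):u\in H_0^{\ir+1}(Q)\}$ is finite and nonnegative, and we may fix a minimizing sequence $\seqn{u_n}\subset H_0^{\ir+1}(Q)$.

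Next I would extract compactness. From the uniform bound $\sup_n\mathcal I_{\alpha,r}^\kappa(u_n)<+\infty$ and the nonnegativity of each of the three summands, I read off uniform bounds on $\norm{u_n-u_\eta}_{L^2(Q)}$ (hence on $\norm{u_n}_{L^2(Q)}$, as $u_\eta\in L^2(Q)$), on $TV^r(u_n)$, and on $H^{\ir+1}(u_n)$. The decisive point is that the Huber term $\kappa H^{\ir+1}$ furnishes control of the $L^2$-norm of the derivatives of order $\ir+1$; combined with the $L^2$-bound on $u_n$ and the Poincar\'e inequality on $H_0^{\ir+1}(Q)$ (available thanks to the zero boundary conditions built into the space), this yields a uniform bound on $\norm{u_n}_{H^{\ir+1}(Q)}$. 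Since $H_0^{\ir+1}(Q)$ is a Hilbert space, up to a subsequence $u_n\wto u$ weakly in $H_0^{\ir+1}(Q)$ for some $u\in H_0^{\ir+1}(Q)$ (the space being convex and strongly closed, hence weakly closed), and by the Rellich--Kondrachov compact embedding $u_n\to u$ strongly in $L^2(Q)$, so that, along a further subsequence, $u_n\to u$ a.e. in $Q$.

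I would then pass to the limit by lower semicontinuity, term by term. The fidelity $u\mapsto\norm{u-u_\eta}_{L^2(Q)}^2$ is continuous under strong $L^2$-convergence. For the regularizer, the $L^2$-bound makes $\seqn{u_n}$ locally uniformly integrable, and together with $u_n\to u$ a.e.\ this places us in the first alternative of Theorem \ref{weak_star_comp_s}, whence $\liminf_{n\to\infty}TV^r(u_n)\geq TV^r(u)$. Finally, $H^{\ir+1}$ is convex, proper and lower semicontinuous by assumption, hence weakly lower semicontinuous, so $\liminf_{n\to\infty}H^{\ir+1}(u_n)\geq H^{\ir+1}(u)$. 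Adding these three inequalities gives $\mathcal I_{\alpha,r}^\kappa(u)\leq\liminf_{n\to\infty}\mathcal I_{\alpha,r}^\kappa(u_n)=m$, so that $u$ is a minimizer.

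For uniqueness I would invoke strict convexity. The map $u\mapsto\norm{u-u_\eta}_{L^2(Q)}^2$ is strictly convex; by the linearity of the fractional derivative (see \eqref{linearity_frac}) the functional $TV^r$ is a supremum of linear functionals and hence convex, and $H^{\ir+1}$ is convex by assumption. Thus $\mathcal I_{\alpha,r}^\kappa$ is strictly convex on its (convex) domain, and two distinct minimizers $u_1\neq u_2$ would yield $\mathcal I_{\alpha,r}^\kappa(\tfrac{u_1+u_2}{2})<\tfrac12\mathcal I_{\alpha,r}^\kappa(u_1)+\tfrac12\mathcal I_{\alpha,r}^\kappa(u_2)=m$, a contradiction. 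I expect the main obstacle to be the coercivity step: one must make precise how the Huber functional $H^{\ir+1}$ controls the top-order derivatives in $L^2$, since it is the sole source of compactness here once the order $r$ is fixed; the lower semicontinuity and convexity steps are then routine.
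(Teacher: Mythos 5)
Your proposal is correct and, for the existence part, follows essentially the same route as the paper: the direct method, with coercivity extracted from the $L^2$-fidelity bound together with the Huber term $\kappa H^{\ir+1}$ (the paper likewise passes from $\norm{u_n}_{L^2(Q)}^2+H^{\ir+1}(u_n)\leq C$ to a uniform $W^{\ir+1,2}$-bound), weak $H_0^{\ir+1}$-compactness plus strong $L^1$/$L^2$-convergence, and termwise lower semicontinuity. Two remarks are worth making. First, you cite Theorem \ref{weak_star_comp_s} for the lower semicontinuity of $TV^r$ along the minimizing sequence; the paper at this point invokes Theorem \ref{approx_smooth} (the smooth approximation result), which appears to be a mis-citation, so your reference is the right one. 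Second, the paper's proof stops after establishing existence and never addresses the uniqueness asserted in the statement; your strict-convexity argument (strict convexity of $u\mapsto\norm{u-u_\eta}_{L^2(Q)}^2$, convexity of $TV^r$ as a supremum of linear functionals via \eqref{linearity_frac}, and convexity of $H^{\ir+1}$ by assumption) supplies exactly the missing piece. Your caveat about the coercivity step is also well placed: the argument does rely on $H^{\ir+1}$ controlling the top-order derivatives in $L^2$, which the paper uses implicitly when it appeals to the Sobolev inequality but never states as a hypothesis on $H$.
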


\begin{proof}
Without loss of generality, we can assume $\alpha=1$. Let $\seqn{u_n}\subset H_0^{\ir+1}(Q)$ be a sequence satisfying
\be\label{goes_to_hell_tvr}
\mathcal I_{\alpha,r}^\kappa(u_n)\leq \inf\flp{\mathcal I_{\alpha,r}^\kappa(u):\, u\in H^{\ir+1}_0(Q)}+1/n.
\ee
Thus, there exists a constant $C$ such that 
\begin{align*}
\norm{u_n}^2_{L^2(Q)}+H^{\ir+1}(u_n)&\leq \norm{u_n-u_\eta}_{L^2(Q)}^2+\norm{u_\eta}_{L^2(Q)}+H^{\ir+1}(u_n)\\
&\leq  \mathcal I_{\alpha,r}^\kappa(u_n)+\norm{u_\eta}_{L^2(Q)}^2\leq C<+\infty.
\end{align*}
By Sobolev inequality, we deduce that 
\be
\sup\flp{\norm{u_n}_{W^{\ir+1,2}}:\,\,n\in\N}<+\infty.
\ee
Thus, up to a subsequence, there exists $u\in W^{\ir+1,2}(Q)$ such that 
\be\label{hir12_weak_zero}
u_n\to u\text{ strongly in }L^1(Q)\text{ and }u_n\wto u\text{ weakly in }W^{\ir+1,2}(Q).
\ee
Moreover, by the lower semicontinuity of the norm $\|\cdot\|_{H^{\ir+1}(Q)}$, we have $u\in H_0^{\ir+1}(Q)$. Next, by Theorem \ref{approx_smooth}, we have
\be
\liminfn\,TV^r(u_n)\geq TV^r(u).
\ee
Combined with \eqref{hir12_weak_zero} gives
\be
\liminfn\,\mathcal I_{\alpha,r}^\kappa(u_n)\geq \mathcal I_{\alpha,r}^\kappa(u),
\ee
which, in view of \eqref{goes_to_hell_tvr}, concludes the proof.
\end{proof}

\begin{proposition}[$\Gamma$-convergence of $\mathcal I_{\alpha,r}^\kappa$]
\label{thm_gamma_tvr}
Given a sequence $\seqn{(\alpha_n,r_n)}\subset \T$ such that $(\alpha_n,r_n)\to(\alpha_0,r_0)\in\T$, 
then the functional $\mathcal I_{\alpha_n,r_n}^\kappa$ $\Gamma$-converges to $\mathcal I_{\alpha_0,r_0}^\kappa$ in the $L^1(Q)$ topology. That is, for any $u\in L^1(Q)$ the following two conditions hold:
\begin{enumerate}
\item[{\rm (LI)}] For all sequences $\{u_n\}$ such that 
\be u_n\to u\text{ in }L^1(Q),\ee
we have
\be
\I_{\alpha_0,r_0}^\kappa(u)\leq \liminf_{n\to +\infty}\I_{\alpha_n,r_n}^\kappa(u_n).\ee
\item[{\rm (RS)}]
For each $u\in BV(Q)$, there exists a sequence $\seqn{u_n}\subset BV^{r_n}(Q)\cap H_0^{\lfloor r_n\rfloor+1}$ such that 
\be u_n\to u\text{ in }L^1(Q),\ee
and 
\be
\limsup_{n\to +\infty}\,\I_{\alpha_n,r_n}^\kappa(u_n)\leq \I_{\alpha_0,r_0}^\kappa(u).\ee
\end{enumerate}
\end{proposition}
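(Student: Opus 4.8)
The plan is to verify the two $\Gamma$-convergence conditions separately, treating the three constituents of $\mathcal I_{\alpha,r}^\kappa$ --- the fidelity term $\norm{u-u_\eta}_{L^2(Q)}^2$, the regularizer $\alpha TV^r$, and the Huber term $\kappa H^{\lfloor r\rfloor+1}$ --- one at a time. Throughout, the integer part $\lfloor r_n\rfloor+1$ appearing in the Huber order is the feature that forces a case distinction according to whether $r_0$ is an integer.

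For (LI) I would first reduce to the nontrivial case by extracting a subsequence realizing the liminf and assuming it is a finite limit; then each $u_n\in H_0^{\lfloor r_n\rfloor+1}(Q)$ and the three energies are uniformly bounded. The fidelity term is lower semicontinuous along $u_n\to u$ in $L^1(Q)$ by Fatou's lemma, after passing to an a.e.\ convergent subsequence. For the regularizer, the uniform bound $\sup_n TV^{r_n}(u_n)<\infty$ together with $\norm{u_n}_{L^2(Q)}$ bounded and $r_n\to r_0$ lets me invoke Theorem \ref{compact_lsc_r} with $p=2$, giving $\liminf_n TV^{r_n}(u_n)\geq TV^{r_0}(u)$, and hence (since $\alpha_n\to\alpha_0$) the desired inequality for the $TV$ part. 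For the Huber term I would split: if $r_0\notin\N$ then $\lfloor r_n\rfloor=\lfloor r_0\rfloor$ for $n$ large, so weak lower semicontinuity of $\norm{\cdot}_{H^{\lfloor r_0\rfloor+1}(Q)}$ yields both $u\in H_0^{\lfloor r_0\rfloor+1}(Q)$ and $\liminf_n H^{\lfloor r_n\rfloor+1}(u_n)\geq H^{\lfloor r_0\rfloor+1}(u)$; if $r_0\in\N$, then $\lfloor r_n\rfloor$ oscillates in $\{r_0-1,r_0\}$ and I would argue along the two sub-branches, using that the stronger regularity on either branch controls the weaker target norm.

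For (RS) I may assume $u\in H_0^{\lfloor r_0\rfloor+1}(Q)$ with $\mathcal I_{\alpha_0,r_0}^\kappa(u)<+\infty$, since otherwise the inequality is vacuous. The natural candidate is the constant sequence $u_n\equiv u$ (membership in $H_0^{\lfloor r_n\rfloor+1}(Q)$ being automatic for $r_0\notin\N$ and needing attention for $r_0\in\N$); with this choice the fidelity and Huber terms cause no difficulty, so the crux is the upper bound $\limsup_n TV^{r_n}(u)\leq TV^{r_0}(u)$. I would obtain this by approximating $u$ in $L^1(Q)$ by a smooth $v$ with $TV^{r_0}(v)$ close to $TV^{r_0}(u)$ via Theorem \ref{approx_smooth}, then proving $TV^{r_n}(v)\to TV^{r_0}(v)$ for smooth $v$ using the continuous dependence of the fractional derivative $d^s$ on its order $s$ --- through the strong and uniform continuity of the fractional integration semigroup of Theorem \ref{semigroup_frac_int} --- and finally diagonalizing.

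The main obstacle I anticipate is precisely this continuity of $r\mapsto TV^r(v)$ across integer values: as $r_n\to m\in\N$, the splitting $r_n=\lfloor r_n\rfloor+s_n$ forces $s_n\to 1^-$ when $r_n<m$ and $s_n\to 0^+$ when $r_n\geq m$, so one must check that $\divg^{s_n}\divg^{\lfloor r_n\rfloor}$ converges to the classical $\divg^m$ on smooth fields from both sides, i.e.\ that the Riemann-Liouville derivative degenerates correctly to the integer-order derivative as $s\to 0^+$ and $s\to 1^-$. Reconciling this one-sided behaviour with the matching integer jump of the Huber order in (LI) is the bookkeeping that will require the most care.
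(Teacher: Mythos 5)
Your plan follows the paper's own route almost exactly: (LI) is obtained from the compactness and lower-semicontinuity result (Theorem \ref{compact_lsc_r}), and (RS) from smooth approximation via Theorem \ref{approx_smooth}, dominated convergence giving $TV^{r_n}(v)\to TV^{r_0}(v)$ for smooth $v$, and a diagonal argument --- this is precisely the content of the paper's Proposition \ref{asymptotic_tvs} together with Proposition \ref{compact_semi_para}. The one step you flag but do not carry out, continuity of $r\mapsto TV^{r}$ across integer orders, is resolved in the paper by rescaling $u\in H_0^{\lfloor r\rfloor+1}(Q)$ into a compactly supported function (so that the Riemann--Liouville boundary singularities play no role) before applying dominated convergence; note also that your proposed constant recovery sequence $u_n\equiv u$ must ultimately be replaced by the diagonal sequence, since the smooth-approximation argument only produces a non-constant recovery sequence, which is all that (RS) requires.
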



The proof of Proposition \ref{thm_gamma_tvr} is split into several steps.
\begin{proposition}\label{asymptotic_tvs}
Given $r\in \R^+$, $r_n\to r$, and $u\in BV^r(Q)\cap H_0^{\ir+1}(Q)$, the following two assertions hold.
\begin{enumerate}[1.]
\item
There exists $\seqk{u_k}\subset BV^{r_k}(Q)\cap H_0^{\ir+1}(Q)$ such that $u_k\to u$ strongly in $L^1(Q)$ and
\be
\lim_{r_k\to r} TV^{r_k}(u_k)= TV^{r}(u) 
\ee
\item
If in addition we assume $u\in C^\infty(\bar Q)$, we have also
\be
\lim_{r_k\to r} TV^{r_k}(u)=TV^{r}(u).
\ee
\end{enumerate}
\end{proposition}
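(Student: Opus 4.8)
The plan is to prove Assertion~2 first, for $u\in C^\infty(\bar Q)$ satisfying the standing boundary conditions $u\in H_0^{\ir+1}(Q)$, and then to deduce Assertion~1 from it by combining the smooth approximation of Theorem~\ref{approx_smooth} with a diagonal extraction. The reason for this order is that on smooth functions the fractional derivatives can be moved off the test field and onto $u$, reducing the problem to an $L^1$-convergence of fractional derivatives, for which the semigroup continuity results of Section~\ref{sec_fractional_derivative} are available.

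For Assertion~2, the first step is to use the fractional integration-by-parts formula recalled in Section~\ref{sec_fractional_derivative}: since $u$ is smooth and vanishes on $\partial Q$ together with its derivatives up to order $\ir$, and the test field $\vp$ is compactly supported, all the derivatives in $\divg^s[\divg^\ir\vp]$ may be transferred onto $u$ (the sign $(-1)^{\ir+1}$ being irrelevant as $-\vp$ is also admissible). For every order $\rho$ near $r$ this yields the representation
\[
TV^\rho(u)=\sup\left\{\int_Q (\nabla^\rho u)\cdot\vp\,dx:\ |\vp|\le 1\right\}=\int_Q |\nabla^\rho u|\,dx,
\]
where $\nabla^\rho u$ is the scaled fractional gradient tensor whose entries are the scaled fractional partial derivatives of $u$, the supremum being computed by the usual duality between the $L^\infty$-constraint on $\vp$ and the $L^1$-norm. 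Thus Assertion~2 reduces to showing $\nabla^{r_k}u\to\nabla^r u$ in $L^1(Q)$ and passing to the limit under $t\mapsto\int_Q|t|$. Working coordinate-wise and using the Caputo form, which agrees with the R--L form on our class by Remark~\ref{constant_in_kernal}, each scaled fractional partial derivative is, up to the factor $[(1-1/N)s+1/N]$ that depends continuously on $s$, a fractional integral of a fixed classical derivative of $u$: schematically $\partial_i^{s}\partial_i^{\ir}u=\mathbb I^{1-s}[\partial_i^{\ir+1}u]$. Since $\partial_i^{\ir+1}u\in C(\bar Q)\subset L^1(Q)$, the strong continuity of the fractional-integration semigroup $\{\mathbb I^\rho\}_{\rho\ge0}$ (uniform for $\rho>0$) from Theorem~\ref{semigroup_frac_int} gives $\mathbb I^{1-s_k}\to\mathbb I^{1-s}$ on $L^1(Q)$, hence $L^1$-convergence of each fractional partial, and the scaling factors converge as well; assembling the coordinates gives $\nabla^{r_k}u\to\nabla^r u$ in $L^1(Q)$.

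The main obstacle is the behavior at integer $r$, where $\lfloor r_k\rfloor$ need not equal $\lfloor r\rfloor=r$: if $r_k\uparrow r$ then $\lfloor r_k\rfloor=r-1$ and $s_k\uparrow 1$, whereas if $r_k\downarrow r$ then $\lfloor r_k\rfloor=r$ and $s_k\downarrow 0$, so the representation uses different classical derivatives on the two sides. Both cases are reconciled by the endpoint strong continuity of the semigroup together with the boundary conditions: when $s_k\uparrow 1$ one has $\mathbb I^{1-s_k}\to\mathbb I^0=\mathrm{Id}$ applied to $\partial_i^{\lfloor r_k\rfloor+1}u=\partial_i^{r}u$, giving the limit $\partial_i^{r}u$; when $s_k\downarrow 0$ one has $\mathbb I^{1-s_k}\to\mathbb I^{1}$ applied to $\partial_i^{r+1}u$, which by the fundamental theorem of calculus and the vanishing trace $\partial_i^{r}u|_{\partial Q}=0$ again returns $\partial_i^{r}u$. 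Hence $\nabla^{r_k}u\to\nabla^r u$ across the integer threshold as well, completing Assertion~2.

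Finally, for Assertion~1, let $\{v_m\}\subset C^\infty(\bar Q)\cap H_0^{\ir+1}(Q)$ be the smooth approximants provided by Theorem~\ref{approx_smooth}, so that $v_m\to u$ in $L^1(Q)$ and $TV^r(v_m)\to TV^r(u)$. For each fixed $m$, Assertion~2 yields $TV^{r_k}(v_m)\to TV^r(v_m)$ as $k\to\infty$. A standard diagonal extraction then selects indices $m(k)\to\infty$ and sets $u_k:=v_{m(k)}$, so that $u_k\to u$ in $L^1(Q)$ and, by the triangle inequality, $|TV^{r_k}(u_k)-TV^r(u)|\le |TV^{r_k}(v_{m(k)})-TV^r(v_{m(k)})|+|TV^r(v_{m(k)})-TV^r(u)|\to 0$. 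As each $u_k$ is smooth with the required boundary conditions, $u_k\in BV^{r_k}(Q)\cap H_0^{\ir+1}(Q)$, which gives the desired sequence. We note that the lower bound $\liminf_k TV^{r_k}(u_k)\ge TV^r(u)$ is in any case guaranteed by the lower semicontinuity in Theorem~\ref{compact_lsc_r}, providing a consistency check on the construction.
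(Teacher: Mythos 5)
Your proof is correct in substance and shares the paper's overall architecture (establish the statement for smooth functions first, then pass to general $u$ via Theorem \ref{approx_smooth} and a diagonal extraction), but the engine driving the smooth case is genuinely different. The paper fixes near-optimal test fields $\vp_k$, bounds $\abs{\nabla^{r_k}u}$ uniformly by $\norm{\nabla^{\ir+1}u}_{L^\infty}\frac{1}{(1-s_0)\Gamma(1-s_0)}$, and invokes pointwise a.e.\ convergence plus dominated convergence; this forces a case split, since the domination constant blows up as $s_0\to 1$, and the integer case $r_0\in\N$ is then handled by a separate dilation argument producing compactly supported approximants. You instead write each scaled fractional partial as $\mathbb I^{1-s}[\partial_i^{\ir+1}u]$ (legitimate on your class by Remark \ref{constant_in_kernal}) and let the strong continuity of the semigroup $\{\mathbb I^\rho\}_{\rho\ge 0}$ from Theorem \ref{semigroup_frac_int} deliver $L^1$-convergence of $\nabla^{r_k}u$ directly, with the integer threshold absorbed by the endpoint behaviour $\mathbb I^{1-s_k}\to\mathrm{Id}$ (resp.\ $\to\mathbb I^1$ plus the vanishing trace of $\partial_i^{\ir}u$). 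This is cleaner and unifies the two cases, at the price of leaning on the boundary conditions throughout, whereas the paper's Case 1 deliberately avoids them. Two small points to tighten: (i) your appeal to Assertion 2 for the approximants $v_m$ requires that Theorem \ref{approx_smooth} produce approximants retaining the $H_0^{\ir+1}$ boundary data (the paper has the same implicit reliance in its diagonal step); (ii) when $r_k\uparrow r\in\N$ the test-tensor shape and the normalizing factor $[(1-1/N)s+1/N]$ in \eqref{scaled_divg} change discontinuously at $s=1$ versus the integer definition, so the identification $\lim_k\int_Q\abs{\nabla^{r_k}u}\,dx=TV^r(u)$ deserves an explicit line — though this is a definitional wrinkle the paper's own Case 2 also leaves untreated.
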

\begin{proof}
We claim separately that 
\begin{enumerate}[1.]
\item
for any given sequence $\seqk{r_k}\subset \R^+$ such that $r_k\to r_0$, it holds
\be
\liminf_{r_k\to r_0} TV^{r_k}(u)\geq TV^{r_0}(u).
\ee
\item
For any given sequence $\seqk{r_k}\subset \R^+$ such that $r_k\to r_0$, then, up to a subsequence,
 $u_k\to u$ strongly in $L^1(Q)$, and
\be
\limsup_{r_k\to r_0} TV^{r_k}(u_k)\leq TV^{r_0}(u).
\ee
\end{enumerate}
Statement 1 can be deduced directly from the definition, and basic properties, of $TV^r$. 
We focus on showing Statement 2. We split our argument into two cases.\\\\ 
\underline{Case 1.} Assume $r_0\in\R^+\setminus\N$. In this case, the proof of Assertion 2 does not rely on the boundary conditions on $u$. 
Assume first $u\in C^\infty(\bar Q)$, and let $\delta>0$ be given. Then for each $k\in\N$, we could find $\vp_k\in C_c^\infty(Q)$ such that
\be\label{moving_upper_test}
TV^{r_k}(u)\leq \int_Q u\,\divg^{r_k}\vp_k\,dx+\delta.
\ee
On the other hand, since $u\in C^\infty(\bar Q)$, we have 
\be
\abs{D^r u(x)}\leq \norm{\nabla^{\ir+1}u}_{L^\infty(Q)}\frac 1{(1-s)\Gamma(1-s)},
\ee
where $r=\ir+s$. Hence, since $r_k\to r_0\in\R^+\setminus \N$, we have 
\be
\sup_{k}\abs{D^{r_k} u(x)}\leq 2\norm{\nabla^{\lfloor r_0\rfloor+1}u}_{L^\infty(Q)}\frac 1{(1-s_0)\Gamma(1-s_0)}<+\infty,
\ee
which then gives
\be\label{L1upperbound}
 \abs{\int_Q \nabla^{r_k}u\,\vp_k\,dx}
\leq \int_Q\abs{\nabla^{r_k} u(x)}dx \leq \int_Q 2\norm{\nabla^{\lfloor r_0\rfloor+1}u}_{L^\infty(Q)}\frac 1{(1-s_0)\Gamma(1-s_0)}dx+\delta<+\infty.
\ee
Also, since $u\in C^\infty(\bar Q)$ and $r_0\in\R^+\setminus \N$, we have
\be
\nabla^{r_k} u(x)\to \nabla^{r_0}u(x),\text{ for a.e. }x\in Q.
\ee
This, combined with \eqref{L1upperbound}, allows us to apply the dominated convergence theorem, to conclude that 
\be
\lim_{k\to\infty}\int_Q\abs{\nabla^{r_k} u(x)}dx =\int_Q\abs{\nabla^{r_0}u}dx.
\ee
Together with \eqref{moving_upper_test}, we have
\begin{align*}
\limsup_{k\to\infty}TV^{r_k}(u) &\leq \limsup_{k\to\infty}\int_Q u\,\divg^{r_k}\vp_k\,dx+\delta\leq \limsup_{k\to\infty}\abs{\int_Q \nabla^{r_k}u\,\vp_k\,dx}+\delta\\
&\leq\limsup_{k\to\infty} \int_Q\abs{\nabla^{r_k} u}dx +\delta = \int_Q\abs{\nabla^{r_0}u}dx+\delta.
\end{align*}
By the arbitrariness of $\delta>0$, we conclude that 
\be\label{smooth_result}
\limsup_{k\to\infty}TV^{r_k}(u)\leq TV^{r_0}(u), \text{ for each }u\in C^\infty(\bar Q),
\ee 
as desired.\\\\
Now, assume $u\in BV^{r_0}(Q)$ only. By Theorem \ref{approx_smooth}, there exists a sequence $\seqe{u_\e}\subset BV^{r_0}(Q)\cap C^\infty(\bar Q)$ such that $u_\e\to u$ strongly in $L^1(Q)$ and
\be
TV^{r_0}(u_\e)\to TV^{r_0}(u),\text{ or }TV^{r_0}(u_\e)\leq TV^{r_0}(u)+O(\e).
\ee
This, combined with \eqref{smooth_result}, gives that, for each fixed $\e>0$, 
\be
\limsup_{k\to\infty}TV^{r_k}(u_\e)\leq TV^{r_0}(u_\e)\leq TV^{r_0}(u)+O(\e).
\ee
Thus, by a diagonal argument, there exists a (not relabeled) subsequence $u_{k_\e}$ such that 
\be
\limsup_{\e\to 0}TV^{r_{k_\e}}(u_\e)\leq TV^{r_0}(u),
\ee 
concluding the proof for this case.\\\\
\underline{Case 2:} Assume $r_0\in\N$. Since $u\in H^{\ir+1}_0(Q)$, we extend $u$ to all of $\rn$, by setting $u=0$ outside of $Q$, and let 
\be
u_\e(x):=u(x_\e)\text{ for }x_\e:=\frac1{1+\e}[x-(1/2)^N] + (1/2)^N,\,\,x\in\flp{x\in\rn:\,\,\operatorname{dist}(x,Q)\leq \e}.
\ee
Then, we have $u_\e(x)$ is compactly supported in $Q$, and in view of zero boundary condition on $u$, we have that 
\be\label{compactlized_eq}
TV^{r}(u_\e)\to TV^{r}(u).
\ee 
In view of Theorem \ref{approx_smooth}, it is not restrictive to impose $u_\e\in C_c^\infty(Q)$. Hence, 
we have $\nabla^{r_n}u_\e\to\nabla^r u_\e$ for a.e. $x\in Q$. Then, by the dominated convergence theorem, we conclude that, for each $\e>0$,
\be
TV^{r_n}(u_\e)\to TV^r(u_\e).
\ee
This, combined with \eqref{compactlized_eq}, gives a sequence $u_{\e_n}$ such that $u_{\e_n}\to u$ in $L^2(Q)$ and 
\be
TV^{r_n}(u_{\e_n})\to TV^r(u),
\ee
as desired.
\end{proof}


\begin{proposition}[$\Gamma$-$\liminf$ inequalities]\label{compact_semi_para}
Let $\seqn{(\alpha_n,r_n)}\subset \mathbb \R^2$ be a sequence satisfying $r_n\to r_0\in\R^+$, and $\alpha_n\to \alpha_0\in\R^+$. For every $n\in \N$, let $u_n\in BV^{r_n}(Q)$ be such that 
\be
\sup\flp{\mathcal I_{\alpha_n,r_n}^\kappa(u_n):\,\,n\in\N}<+\infty.
\ee
Then, there exists $u\in BV^{r_0}(Q)$ such that, up to a (non-relabeled) subsequence,
\be
u_n\to u\text{ strongly in }L^1(Q),
\ee
and 
\be
\liminfn \,\mathcal I_{\alpha_n,r_n}^\kappa(u_n)\geq \mathcal I_{\alpha_0,r_0}^\kappa(u).
\ee
\end{proposition}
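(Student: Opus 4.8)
The plan is to read this as a compactness-plus-lower-semicontinuity statement and to extract everything from the three nonnegative contributions to $\mathcal I_{\alpha_n,r_n}^\kappa$. Setting $M:=\sup_n\mathcal I_{\alpha_n,r_n}^\kappa(u_n)<+\infty$, I would first note that each $u_n$ necessarily lies in $H_0^{\lfloor r_n\rfloor+1}(Q)$, as otherwise the energy would be $+\infty$. Since all three terms are nonnegative, the fidelity term bounds $\norm{u_n}_{L^2(Q)}\le \sqrt M+\norm{u_\eta}_{L^2(Q)}$, the Huber term bounds $H^{\lfloor r_n\rfloor+1}(u_n)\le M/\kappa$, and, provided $\alpha_0>0$, the convergence $\alpha_n\to\alpha_0$ turns $\alpha_nTV^{r_n}(u_n)\le M$ into $\sup_nTV^{r_n}(u_n)<+\infty$. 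If instead $\alpha_0=0$ the total-variation term is simply discarded at the end, since $\alpha_nTV^{r_n}(u_n)\ge0$.

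Because $r_n\to r_0$, the integers $\lfloor r_n\rfloor$ take only finitely many values, so I would pass to a non-relabeled subsequence along which $\lfloor r_n\rfloor\equiv m$ is constant. Along it the Huber and $L^2$ bounds combine, exactly as in the proof of Proposition \ref{unique_exist_lower} via the Sobolev inequality, into a uniform bound $\sup_n\norm{u_n}_{W^{m+1,2}(Q)}<+\infty$. By Rellich--Kondrachov there is $u\in W^{m+1,2}(Q)$ with, up to a further subsequence, $u_n\to u$ strongly in $L^1(Q)$ and $u_n\wto u$ weakly in $W^{m+1,2}(Q)$; weak closedness of $H_0^{m+1}(Q)$ then gives $u\in H_0^{m+1}(Q)$, and since $m\le r_n<m+1$ forces $r_0\le m+1$, the inclusion $H^{m+1}(Q)\subset BV^{r_0}(Q)$ yields $u\in BV^{r_0}(Q)$, the desired strong limit.

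It then remains to pass to the limit inferior term by term. The fidelity term is lower semicontinuous along $u_n\wto u$ in $L^2(Q)$ by convexity of $\norm{\cdot-u_\eta}_{L^2(Q)}^2$. For the regularizer I would invoke Theorem \ref{compact_lsc_r} with $p=2$, whose hypotheses are precisely the bounds secured above, to get $\liminfn TV^{r_n}(u_n)\ge TV^{r_0}(u)$. Finally, the Huber functional is convex, proper, and lower semicontinuous, hence weakly lower semicontinuous, so $\liminfn H^{m+1}(u_n)\ge H^{m+1}(u)$. Summing the three inequalities, and using $\alpha_n\to\alpha_0$, delivers $\liminfn\mathcal I_{\alpha_n,r_n}^\kappa(u_n)\ge\mathcal I_{\alpha_0,r_0}^\kappa(u)$.

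The one genuinely delicate point is the discontinuity of the exponent $\lfloor r\rfloor+1$ at integer orders. The subsequence reduction pins down a single $m$, but to match the target functional one must check that $m+1=\lfloor r_0\rfloor+1$, so that both the limiting Huber term $H^{\lfloor r_0\rfloor+1}(u)$ and the admissible space $H_0^{\lfloor r_0\rfloor+1}(Q)$ agree with those in $\mathcal I_{\alpha_0,r_0}^\kappa$. For $r_0\notin\mathbb N$ this holds automatically for large $n$. The hard case is $r_0\in\mathbb N$ approached from below, where $\lfloor r_n\rfloor=r_0-1$ supplies only a bound in $W^{r_0,2}(Q)$, one derivative short of $H_0^{r_0+1}(Q)$; reconciling this boundary/integer case --- presumably through the compactification and smooth-approximation devices of Proposition \ref{asymptotic_tvs} and Theorem \ref{approx_smooth}, together with the imposed zero boundary conditions --- is where I expect the main effort to concentrate.
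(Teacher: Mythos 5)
Your proposal is correct and takes essentially the same route as the paper, whose entire proof consists of the single sentence that the result ``can be directly inferred from Proposition \ref{compact_lsc_r}, since we have $\alpha\in\R^+$'' (i.e.\ a citation of the compactness/lower-semicontinuity Theorem \ref{compact_lsc_r}, exactly the key lemma you invoke). You have merely filled in the details the paper omits --- the term-by-term bounds, the subsequence with constant $\lfloor r_n\rfloor$, and the floor-function discontinuity at integer orders, the last of which the paper does not address at all --- so your write-up is, if anything, more complete than the original.
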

\begin{proof}
The prof can be directly inferred from Proposition \ref{compact_lsc_r}, since we have $\alpha\in\R^+$.
\end{proof}

%

%
%
%

%

\subsubsection{Extending with underlying Euclidean norm}\label{sec_bilevel_TVp}
Let $p\in[1,+\infty]$ and $r\in[0,+\infty)$ be given. We define the $\ell^p$-(an)-isotropic real $r$-order total variation $TV^r_{\ell^p}$ by
\be\label{def_tv_pr}
TV_{\ell^p}^r(u):=\sup\flp{\int_Qu\, \divg^s[\divg^\ir \vp] \,dx:\,\,\vp\in C_c^\infty(Q;\M^{N\times (N^{\ir})})\text{ and }\abs{\vp}_{\ell^p}^\ast\leq 1}.
\ee
\begin{lemma}\label{TVlpq_lemma}
Given $1\leq q<p\leq \infty$, we have
\be
N^{1/q-1/p}TV_{\ell^p}^r(u)\leq TV_{\ell^q}^r(u)\leq TV_{\ell^p}^r(u),
\ee
for all $r\in\R^+$ and $u\in BV^r(Q)$.
\end{lemma}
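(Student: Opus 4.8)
The plan is to reduce the comparison of the two seminorms to a pointwise comparison of the dual norms $\abs{\cdot}_{\ell^q}^\ast$ and $\abs{\cdot}_{\ell^p}^\ast$ that appear in the admissibility constraints, and then to transfer that comparison to the suprema defining $TV_{\ell^q}^r$ and $TV_{\ell^p}^r$. The key structural observation is that both seminorms are suprema of the \emph{same} linear functional $\varphi\mapsto\int_Q u\,\divg^s[\divg^\ir\varphi]\,dx$, taken over the admissible classes
\[
\mathcal C_p:=\flp{\varphi\in C_c^\infty(Q;\M^{N\times(N^\ir)}):\,\abs{\varphi}_{\ell^p}^\ast\leq 1},
\]
with $\mathcal C_q$ defined by replacing $p$ with $q$ (and the case $\ir=0$ of Definition \ref{isotropic_frac_variation} being identical, with $\rn$ in place of $\M^{N\times(N^\ir)}$). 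Because the functional is common to both, any set containment between $\mathcal C_p$ and $\mathcal C_q$, or any rescaling carrying one class into the other, passes directly to an inequality between the suprema.

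First I would record the pointwise relation between the two dual norms. Writing $p'$, $q'$ for the conjugate exponents ($1/p+1/p'=1$ and $1/q+1/q'=1$), the dual norm $\abs{\cdot}_{\ell^p}^\ast$ coincides with $\abs{\cdot}_{\ell^{p'}}$, and likewise for $q$. Since $q<p$ forces the reversed ordering $p'<q'$, applying the norm equivalence of Remark \ref{an_iso_equ} to the conjugate exponents yields, at every point $x\in Q$, a two-sided bound comparing $\abs{\varphi(x)}_{\ell^q}^\ast$ and $\abs{\varphi(x)}_{\ell^p}^\ast$ with multiplicative constants $1$ and $N^{1/q-1/p}$, the exponent arising from the identity $1/p'-1/q'=1/q-1/p$.

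With this pointwise bound in hand, the two inequalities follow by the containment/rescaling argument. One direction of the dual-norm bound gives a genuine inclusion of the admissible classes, which immediately yields one of the inequalities between the suprema; the other direction shows that after dividing $\varphi$ by the constant $N^{1/q-1/p}$ the admissibility constraint for one norm implies that for the other, and since the functional is linear in $\varphi$ the constant factors out of the supremum, producing the complementary inequality. Carrying this out in both directions transfers the pointwise two-sided bound to the claimed bound between $TV_{\ell^q}^r(u)$ and $TV_{\ell^p}^r(u)$, uniformly in $r\in\R^+$ and $u\in BV^r(Q)$.

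The step requiring the most care is the bookkeeping around the conjugate exponents: passing from $\ell^p$, $\ell^q$ to their duals reverses the ordering of the indices, so one must invoke Remark \ref{an_iso_equ} with $p'$, $q'$ in the correct roles and, crucially, attach the constant $N^{1/q-1/p}$ to the correct side of each inequality and verify that it lands on the intended side of the final chain. Everything else is a routine transfer of a pointwise norm estimate through the supremum of a linear functional.
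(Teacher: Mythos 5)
Your strategy is exactly the paper's: reduce everything to the pointwise comparison of the dual norms $\abs{\cdot}_{\ell^{p^\ast}}$ and $\abs{\cdot}_{\ell^{q^\ast}}$ obtained from Remark \ref{an_iso_equ} applied to the conjugate exponents, then transfer it to the suprema via an inclusion of the admissible classes in one direction and a rescaling of the test field (using linearity of the functional) in the other. There is no methodological difference to report.

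However, the one step you explicitly deferred --- ``verify that the constant lands on the intended side of the final chain'' --- is precisely where the content is, and you should carry it out rather than postpone it. Since $q<p$ gives $p^\ast\le q^\ast$, Remark \ref{an_iso_equ} applied to the conjugates yields $\abs{\vp(x)}_{\ell^{q^\ast}}\le\abs{\vp(x)}_{\ell^{p^\ast}}\le N^{1/q-1/p}\abs{\vp(x)}_{\ell^{q^\ast}}$. Hence the inclusion of admissible classes is $\mathcal C_p\subseteq\mathcal C_q$, which gives $TV^r_{\ell^p}(u)\le TV^r_{\ell^q}(u)$, and the rescaling argument gives $TV^r_{\ell^q}(u)\le N^{1/q-1/p}\,TV^r_{\ell^p}(u)$. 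This is the \emph{reverse} of the chain displayed in the Lemma, which as printed is internally inconsistent: it asserts both $TV^r_{\ell^q}(u)\le TV^r_{\ell^p}(u)$ and $TV^r_{\ell^q}(u)\ge N^{1/q-1/p}TV^r_{\ell^p}(u)$ with $N^{1/q-1/p}\ge 1$, which would force both seminorms to vanish. The paper's own proof commits the same slip (it deduces $TV^r_{\ell^q}\le TV^r_{\ell^p}$ from the inclusion $\mathcal C_p\subseteq\mathcal C_q$, which in fact gives the opposite inequality between the suprema). So your method is sound, but finishing the bookkeeping shows that the provable statement is $TV^r_{\ell^p}(u)\le TV^r_{\ell^q}(u)\le N^{1/q-1/p}\,TV^r_{\ell^p}(u)$, and the Lemma (and its use downstream, e.g.\ in Proposition \ref{thm:new-Gamma}) should be restated with the inequalities in that order.
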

\begin{proof}
Let $1\leq q<p\leq+\infty$ be given. From Remark \ref{an_iso_equ} we have 
\be\label{lp_euclidean_use}
N^{1/p-1/q}\abs{\vp(x)}_{\ell^{p^\ast}}\leq \abs{\vp(x)}_{\ell^{q^\ast}}\leq \abs{\vp(x)}_{\ell^{p^\ast}},
\ee
for all $\vp\in C_c^\infty(Q;\rn)$, and $x\in Q$. That is, for any $\vp\in C_c^\infty(Q;\rn)$ such that $\abs{\vp(x)}_{\ell^{p^\ast}}\leq 1$, 
we have $\abs{\vp(x)}_{\ell^{q^\ast}}\leq 1$, and combined with \eqref{def_tv_pr}, gives
\be
TV^r_{\ell^q}(u)\leq TV^r_{\ell^p}(u),
\ee
for any $u\in BV^r(Q)$. Similarly, we use the left hand side of \eqref{lp_euclidean_use} to conclude that 
\be
N^{1/q-1/p}TV_{\ell^p}^r(u)\leq TV_{\ell^q}^r(u),
\ee
as desired.
\end{proof}
\begin{define}
Let $r\in[0,+\infty)$ and $\alpha\in \R^+$ be given. Let
\be
\mathcal I^\kappa_{\alpha,r,p}(u):=
\begin{cases}
\norm{u-u_\eta}_{L^2(Q)}^2+\alpha TV_{\ell^p}^{r}(u)+\kappa H^{\ir+1}(u)&\text{ for }u\in H_0^{\ir+1}(Q),\\
+\infty&\text{ otherwise }.
\end{cases}
\ee
\end{define}
\begin{proposition}[$\Gamma$-convergence of $\mathcal I_{\alpha,r,p}^\kappa$ functional]
\label{thm:new-Gamma}
Let $\seqn{(\alpha_n,r_n,p_n)}\subset \T$ be a sequence such that $(\alpha_n,r_n,p_n)\to(\alpha_0,r_0,p_0)\in\T$. Then, 
the functional $\mathcal I_{\alpha_n,r_n,p_n}^\kappa$ $\Gamma$-converges to $\mathcal I_{\alpha_0,r_0,p_0}^\kappa$ in the $L^1(Q)$ topology. That is, 
for every $u\in L^1(Q)$ the following two conditions hold:
\begin{enumerate}
\item[{\rm (LI)}] If 
\be u_n\to u\text{ in }L^1(Q),\ee
then 
\be
\I_{\alpha_0,r_0,p_0}^\kappa(u)\leq \liminf_{n\to +\infty}\I_{\alpha_n,r_n,p_n}^\kappa(u_n).\ee
\item[{\rm (RS)}]
For each $u\in BV^r(Q)\cap H_0^{\ir+1}(Q)$, there exists $\seqn{u_n}\subset BV^{r_n}(Q)\cap H_0^{\lfloor r_n\rfloor+1}$ such that 
\be u_n\to u\text{ in }L^1(Q),\ee
and 
\be
\limsup_{n\to +\infty}\,\I_{\alpha_n,r_n,p_n}^\kappa(u_n)\leq \I_{\alpha_0,r_0,p_0}^\kappa(u).\ee
\end{enumerate}
\end{proposition}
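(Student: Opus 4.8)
The plan is to treat the new exponent variable $p$ as a mild perturbation of the $(\alpha,r)$-theory already built in Propositions~\ref{asymptotic_tvs}, \ref{compact_semi_para} and \ref{thm_gamma_tvr}. The only feature distinguishing Proposition~\ref{thm:new-Gamma} from Proposition~\ref{thm_gamma_tvr} is the varying Euclidean exponent $p_n\to p_0$, and this is controlled by Lemma~\ref{TVlpq_lemma}: combining its two one-sided bounds yields, for every $n$ and every $u$, the two-sided comparison at the common order $r_n$,
\be
c_n^{-1}\,TV_{\ell^{p_0}}^{r_n}(u)\leq TV_{\ell^{p_n}}^{r_n}(u)\leq c_n\,TV_{\ell^{p_0}}^{r_n}(u),\qquad c_n:=N^{\abs{1/p_n-1/p_0}},
\ee
where $c_n\to 1$ as $p_n\to p_0$, the limit being valid also at the endpoints $p_0\in\{1,+\infty\}$ since then $\abs{1/p_n-1/p_0}\to 0$. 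I would use this to reduce both (LI) and (RS) to the corresponding statements for the \emph{fixed} exponent $p_0$: those arguments use only the supremum structure of the seminorm and the pointwise convergence $\nabla^{r_n}u\to\nabla^{r_0}u$, hence hold verbatim with $\ell^{p_0}$ in place of the standard Euclidean norm.

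For (LI), I may assume the right-hand $\liminf$ is finite along a subsequence, so that $\sup_n\mathcal I_{\alpha_n,r_n,p_n}^\kappa(u_n)<+\infty$ along it. Applying the lower comparison above together with $\alpha_n\to\alpha_0$ and $c_n\to 1$, one obtains
\be
\liminfn \alpha_n\,TV_{\ell^{p_n}}^{r_n}(u_n)\geq \liminfn \alpha_0\,TV_{\ell^{p_0}}^{r_n}(u_n)\geq \alpha_0\,TV_{\ell^{p_0}}^{r_0}(u),
\ee
where the last inequality is the $\ell^{p_0}$-version of the lower semicontinuity in Proposition~\ref{compact_semi_para} (equivalently Proposition~\ref{compact_lsc_r}). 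The fidelity term is weakly lower semicontinuous in $L^2$ — the uniform bound furnishes weak $L^2$-convergence of $u_n$ to $u$ on the subsequence — and the Huber term is convex and lower semicontinuous; adding the three contributions gives $\mathcal I_{\alpha_0,r_0,p_0}^\kappa(u)\leq\liminfn \mathcal I_{\alpha_n,r_n,p_n}^\kappa(u_n)$.

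For (RS), I would start from the recovery sequence produced by the fixed-$p_0$ construction of Proposition~\ref{asymptotic_tvs}: for $u\in BV^{r_0}(Q)\cap H_0^{\lfloor r_0\rfloor+1}(Q)$ there is $\seqn{u_n}\subset BV^{r_n}(Q)\cap H_0^{\lfloor r_n\rfloor+1}(Q)$ with $u_n\to u$ in $L^1(Q)$ and $TV_{\ell^{p_0}}^{r_n}(u_n)\to TV_{\ell^{p_0}}^{r_0}(u)$, along which the fidelity and Huber terms converge to their values at $u$. Feeding this sequence into the upper comparison and using $c_n\to 1$, $\alpha_n\to\alpha_0$, gives
\be
\limsup_{n\to\infty}\alpha_n\,TV_{\ell^{p_n}}^{r_n}(u_n)\leq \limsup_{n\to\infty}\alpha_n c_n\,TV_{\ell^{p_0}}^{r_n}(u_n)=\alpha_0\,TV_{\ell^{p_0}}^{r_0}(u),
\ee
and summing with the converging fidelity and Huber terms yields (RS).

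The only genuinely new point — hence the main thing to watch — is the uniformity of the constant $c_n$ across the full exponent range, in particular at the endpoints $p_0=1$ and $p_0=+\infty$; once $c_n\to 1$ is secured there, the $p$-direction decouples completely and everything else is inherited from the $(\alpha,r)$-theory. A secondary bookkeeping point is to record explicitly that Propositions~\ref{asymptotic_tvs} and \ref{compact_semi_para} remain valid with $\ell^{p_0}$ replacing the standard Euclidean norm, which I would state as a one-line remark rather than reprove.
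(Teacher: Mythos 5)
Your proposal is correct and follows essentially the same route as the paper: both control the varying exponent $p_n$ through the norm-equivalence of Lemma~\ref{TVlpq_lemma}, whose constants $N^{\abs{1/p_n-1/p_0}}$ tend to $1$, and then fall back on the $(\alpha,r)$ $\Gamma$-convergence of Proposition~\ref{thm_gamma_tvr} together with lower semicontinuity of the fidelity and Huber terms. The only cosmetic difference is that the paper proves the limsup bound by splitting into the monotone cases $p_n\searrow p_0$ and $p_n\nearrow p_0$ and arguing on near-optimal test functions, whereas your two-sided sandwich $c_n^{-1}TV^{r_n}_{\ell^{p_0}}\le TV^{r_n}_{\ell^{p_n}}\le c_n\,TV^{r_n}_{\ell^{p_0}}$ handles both directions at once.
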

\begin{proof}
We prove the $\liminf$ inequality first. Consider a sequence $p_n\to p$. From Lemma \ref{TVlpq_lemma}, we have, for each $n\in\N$, 
\be
TV^{r_n}_{\ell^{p_n}}(u_n)\geq N^{-\abs{1/p_n-1/p}}TV^{r_n}_{\ell^p}(u_n),
\ee
and in view of (LI) from Proposition \ref{thm_gamma_tvr}, it gives
\be
\liminfn\,TV^{r_n}_{\ell^{p_n}}(u_n)\geq \fsp{\liminfn\,N^{-\abs{1/p_n-1/p}}}\fsp{\liminfn\,TV^{r_n}_{\ell^p}(u_n)}\geq TV_{\ell^p}^r(u).
\ee

We analyze this proposition under the assumption $\alpha_n=1$ for all $n\in\N$. The thesis for a general sequence $\seqn{\alpha_n}$ follows by straightforward adaptations.\\\\
Fix $\e>0$. We first assume $p_n\searrow p$. That is, $p_n^\ast\nearrow p^\ast$. In view of \eqref{def_tv_pr},
we may choose $\vp_n\in C_c^\infty(Q;\R^2)$ such that $\abs{\vp_n}_{p_n^\ast}\leq 1$ and
\be\label{xiaoyidian_yidian}
TV_{p_n}(u)\leq \int_Q u\,\divg\vp_n\,dx+\e.
\ee 
Since $p_n^\ast\nearrow p^\ast$, we have $\abs{\vp_n}_{p^\ast}\leq \abs{\vp_n}_{p_n^\ast}\leq1$, and
\be
\int_Q u\,\divg\vp_n\,dx\leq TV_p(u)
\ee
for each $n\in\N$. This, together with \eqref{xiaoyidian_yidian}, gives
\be\label{yidiandianzengjia}
\limsup_{n\to\infty}TV_{p_n}(u)\leq \limsup_{n\to\infty}\int_Q u\,\divg\vp_n\,dx+\limsup_{n\to\infty}\e\leq  \limsup_{n\to\infty}TV_p(u)+\e = TV_p(u)+\e,
\ee
and we conclude by taking the limit $\e\searrow 0$.\\\\
Now we consider the case $p_n\nearrow p$, i.e., $p_n^\ast\searrow p^\ast$. 
We take again a sequence $\seqn{\vp_n}\subset C_c^\infty(Q;\R^2)$ such that $\abs{\vp_n}_{p_n^\ast}\leq 1$ for each $n\in\N$, and \eqref{xiaoyidian_yidian} holds.
In view of \eqref{equivalent_p_norm}, for each $n$ we have
\be
N^{1/p_n^\ast-1/p^\ast}\abs{\vp_n}_{p^\ast}\leq \abs{\vp_n}_{p_n^\ast}\leq1.
\ee
That is, 
\be
N^{1/p_n^\ast-1/p^\ast}\int_Q u\,\divg\vp_n\,dx\leq TV_p(u).
\ee
This, combined with \eqref{yidiandianzengjia}, gives
\be
\limsup_{n\to\infty}TV_{p_n}(u)\leq \limsup_{n\to\infty}N^{1/p_n^\ast-1/p^\ast} TV_p(u)+\e = TV_p(u)+\e,
\ee
and we conclude this proposition by letting $\e\searrow 0$.
\end{proof}

\subsection{Extension with infimal convolution}\label{sec_extension_infimal}
This extension is done by adding some auxiliary functions. We start by reviewing the definition of \emph{total generalized variation}, namely the $TGV$ seminorm.\\\\
For a given function $u\in L^1(Q)$, we define the \emph{second order total generalized variation} $TGV_\alpha^2$ (where $\alpha=(\alpha_0,\alpha_1)\in\R_+^2$) by
\be
TGV_\alpha^2(u):=\inf\flp{\alpha_0\abs{\nabla u-v}_\mb+\alpha_1\abs{\mathcal Ev}_\mb:\,\, v\in BD(Q;\rn)},
\ee
where $\mathcal Ev:=(\nabla v+(\nabla v)^T)/2$ denotes the symmetric derivative of $v\in L^1(Q;\rn)$. Incorporating with the Huber-regularization introduced in Section \ref{the_greatest_saver}, we define the $TGV^{2,\kappa}_\alpha$ seminorm by  
\be
TGV_\alpha^{2,\kappa}(u):=\min\flp{\alpha_0\abs{\nabla u-v}_\mb+\alpha_1[\abs{\mathcal Ev}_\mb+\kappa H^1(v)]:\,\, v\in H_0^1(Q;\rn)},
\ee
where the zero Dirichlet boundary condition on $v$ is imposed to enforce the zero Neumann boundary condition on $u$.\\\\
Similarly, we could define the \emph{non-symmetric second order total generalized variation} $NsTGV_\alpha^{2,\kappa}$ with Huber-regularization by
\be
NsTGV_\alpha^{2,\kappa}(u):=\min\flp{\alpha_0\abs{\nabla u-v}_\mb+\alpha_1[\abs{\nabla v}_\mb+\kappa H^1(v)]:\,\, v\in H_0^1(Q;\rn)}.
\ee
We remark that $NsTGV$ is known to provide, in general, more accurate results compared to $TGV^2_{\alpha}$, but with a higher computational cost.
For more properties of $TGV$ and $NsTGV$, we refer to \cite{valkonen2013total}. \\\\
Also, we could further extend $NsTGV^{2,\kappa}$ and $TGV^{2,\kappa}$ to higher order  $NsTGV^{k,\kappa}$ and $TGV^{k,\kappa}$, $k\in\N$, via 
\begin{multline}
NsTGV_{\alpha}^{k,\kappa}(u):=\inf\flp{\alpha_0\abs{\nabla  u- v_0}_{\mathcal{M}_b(Q)}+\alpha_1\abs{\nabla  v_0-v_1}_{{\mathcal{M}_b(Q)}}+\right.\\
\left. \cdots+\alpha_{k-1}\abs{\nabla v_{k-2}-v_{k-1}}_{{\mathcal{M}_b(Q)}}+\alpha_k[TV(v_{k-1})+\kappa H^1(v_{k-1})],\right.\\
\left. v_i\in BV(Q;\mathbb M^{N^{l}})\text{ for }0\leq i\leq k-2,\,\,v_{k-1}\in H_0^1(Q;\mathbb M^{N^\ir})},
\end{multline}
and similarly in $TGV^{k,\kappa}_\alpha$ by replacing $\nabla v_l$ with $\mathcal Ev_l$, for $l=0,\ldots, k-1$. 
\subsubsection{The real $r$-order $TGV^r$ seminorm} \label{sec_tgv_r}
Let the Huber-regularization parameter $0<\kappa\ll1$ be given, we denote by $\Pi$ the collection of seminorms
\be\label{use_liu2016Image}
\Pi:=\flp{TV^s(v)+\kappa H^1(v):\,\, 0\leq s\leq 1}.
\ee
In next proposition we claim that the collection $\Pi$ satisfies the properties defined in \cite[Definition 3.4]{liu2016Image}, 
provided we are under \cite[Assumptions 3.2 \& 3.3]{liu2016Image}.

\begin{proposition}\label{assum_basic_B}
We collect several properties regarding to seminorm $TV^s(u)$ and function $v\in BV^s(Q;\rn)\cap H_0^1(Q;\rn)$ with zero boundary conditions.
\begin{enumerate}[1.]
\item
The null space of the seminorm $TV^s(\cdot)+\kappa H^1(\cdot)$ has finite dimension.
\item
For every $v\in BV^s(Q;\rn)\cap H^1_0(Q;\rn)$ there exists $\seqn{u_n}\subset C^\infty(\bar Q,\rn)$ such that 
\be
u_n\to u\text{ strongly in }L^1(Q;\rn)\text{ and }TV^s(v_n)+H^1(v_n)\to TV^s(v)+H^1(v);
\ee
\item
Let $\seqn{v_n}\subset H_0^1(Q;\rn)$ and $\seqn{s_n}\subset [\sigma,1-\sigma]$ be such that 
\be\label{eq_assum_basic_BPi}
\sup\flp{\norm{v_n}_{L^1(Q;\rn)}+TV^{s_n}(v_n)+\kappa H^1(v_n):\,\, n\in\N}<+\infty.
\ee
Then there exist $v\in BV^s(Q;\R^N)\cap H_0^1(Q;\rn)$ such that, up to a subsequence (not relabeled), 
\be\label{eq_v0_unif}
v_n\to v\text{ strongly in }L^1(Q;\R^N),
\ee
and 
\be\label{eq_B_n_uniform_bdd2}
\liminfn \,TV^{s_n}(v_n)+\kappa H^1(v_n)\geq TV^s(v)+ \kappa H^1(v).
\ee
\end{enumerate}
\end{proposition}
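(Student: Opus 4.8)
The plan is to prove the three assertions one at a time, in each case reducing to the corresponding result already available for the scalar seminorm $TV^s$ (Theorems \ref{approx_smooth} and \ref{compact_lsc_r}) and combining it with the structural properties of the Huber penalty $H^1$, namely that $H^1$ is convex, proper and lower semicontinuous, controls the gradient in the $L^2$ sense, and vanishes precisely when the gradient does. For Assertion 1, I would observe that the null space is in fact contained in the constants: if $TV^s(v)+\kappa H^1(v)=0$, then since both summands are nonnegative we must have $H^1(v)=0$, which forces $\nabla v=0$ a.e., so that $v$ is constant on the connected set $Q$. Since $v\in H_0^1(Q;\rn)$, the zero boundary condition then gives $v=0$, so the null space is $\{0\}$; in any case it is contained in the $N$-dimensional space of $\rn$-valued constants and is therefore finite-dimensional.

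For Assertion 2 — where the intended statement is that there exist $\seqn{v_n}\subset C^\infty(\bar Q;\rn)$ with $v_n\to v$ in $L^1(Q;\rn)$ and $TV^s(v_n)+H^1(v_n)\to TV^s(v)+H^1(v)$ — I would invoke Theorem \ref{approx_smooth} with $r=s$ (so $\ir=0$ and $H_0^{\ir+1}=H_0^1$), whose hypotheses $v\in BV^s(Q;\rn)\cap H_0^1(Q;\rn)$ are exactly the present ones. This produces smooth $v_n\to v$ in $L^1$ with $TV^s(v_n)\to TV^s(v)$. Taking the approximants to be mollifications of $v$, one additionally has $v_n\to v$ strongly in $H^1$, and since the $H^1$-penalty depends continuously on the gradient in the $L^2$ topology, $H^1(v_n)\to H^1(v)$; summing the two limits yields the claim.

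For Assertion 3, which is the crux, I would combine the compactness-and-lower-semicontinuity Theorem \ref{compact_lsc_r} with the lower semicontinuity of $H^1$. First, from the uniform bound \eqref{eq_assum_basic_BPi}, the control on $\kappa H^1(v_n)$ bounds $\norm{\nabla v_n}_{L^2(Q)}$; together with $v_n\in H_0^1(Q;\rn)$ and the Poincar\'e inequality this bounds $\seqn{v_n}$ in $H_0^1(Q;\rn)$. Passing to a subsequence, $v_n\wto v$ weakly in $H^1$ (so $v\in H_0^1(Q;\rn)$ by weak closedness of $H_0^1$) and, by Rellich compactness, $v_n\to v$ strongly in $L^1$, establishing \eqref{eq_v0_unif}; in particular $\seqn{v_n}$ is bounded in $L^2=L^p$ with $p=2>1$ and $s_n\to s\in[\sigma,1-\sigma]$, so Theorem \ref{compact_lsc_r} applies and gives $v\in BV^s(Q;\rn)$ together with $\liminfn TV^{s_n}(v_n)\geq TV^s(v)$. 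The lower semicontinuity of the convex functional $H^1$ under the same convergence gives $\liminfn H^1(v_n)\geq H^1(v)$. Choosing a subsequence realizing $\liminfn\fsp{TV^{s_n}(v_n)+\kappa H^1(v_n)}$ and using superadditivity of the $\liminf$, one concludes $\liminfn\fsp{TV^{s_n}(v_n)+\kappa H^1(v_n)}\geq TV^s(v)+\kappa H^1(v)$, which is \eqref{eq_B_n_uniform_bdd2}.

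The main obstacle I anticipate is in Assertion 3: one must extract the compactness and the membership $v\in H_0^1(Q;\rn)$ from a bound on the Huber \emph{energy} rather than on a norm, so the argument hinges on the fact that the $H^1$-penalty is coercive in the $H^1$-seminorm (via its $L^2$ control of the gradient), which is what simultaneously supplies the $L^p$-bound with $p>1$ required by Theorem \ref{compact_lsc_r} and the weak $H^1$-closure needed to place the limit in $H_0^1$. The hypothesis $s_n\in[\sigma,1-\sigma]$ is exactly what keeps the orders bounded away from the integer endpoints, so that $s_n\to s$ along a subsequence and the constants in Theorem \ref{compact_lsc_r} remain uniform along the sequence.
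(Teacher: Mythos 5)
Your proof is correct and follows essentially the same route as the paper's (which is only a few lines long): coercivity of the Huber term gives compactness in $H_0^1(Q;\rn)$ and hence strong $L^1$ convergence, Theorem \ref{approx_smooth} handles the approximation in Assertion 2, and lower semicontinuity of $TV$ plus superadditivity of the $\liminf$ closes Assertion 3. The only substantive difference is that for the $\liminf$ inequality you invoke Theorem \ref{compact_lsc_r} (which allows varying orders $r_n\to r$) where the paper cites the fixed-order Theorem \ref{weak_star_comp_s}; since the orders $s_n$ genuinely vary here, your citation is the more accurate one.
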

\begin{proof}
Assertion 1 follows directly from the definition of $H^1$, and Assertion 2 can be deduced from Theorem \ref{approx_smooth}, since $v\in H^1_0(Q;\rn)$ has zero boundary conditions. \\\\
For Assertion 3, let $\seqn{v_n}\subset H_0^1(Q;\rn)$ be a sequence satisfying \eqref{eq_assum_basic_BPi}. 
Then, by the compact embedding properties of $H_0^1$, we have the existence of $v\in H_0^1(Q)$ such that \eqref{eq_v0_unif} holds. 
Finally, by Theorem \ref{weak_star_comp_s}, we conclude \eqref{eq_B_n_uniform_bdd2}, completing the proof.
\end{proof}

We next introduce the real-$r$-order total generalized variation $TGV^{r,\kappa}$, with the embedded Huber-regularization. Recall for any $r\in\R^+$, we write $r=k+s$, with $k=\ir$ and $s\in[0,1)$.

\begin{define}[The $TGV^{r,\kappa}$ seminorms]\label{TGV_fractional}
Let $r=k+s\in\R^+$ be given, and let $\alpha=(\alpha_0,\alpha_1,\alpha_2,\ldots, \alpha_{k})\in\R_+^{k+1}$. 
For every $u\in L^1(Q)$, we define its real order $TGV^{r,\kappa}$ seminorm as follows.

\begin{enumerate}[C{a}se 1.] 
\item
for $k=1$, i.e. $\alpha=(\alpha_0,\alpha_1)\in\R_+^2$, set
\begin{multline}
NsTGV_{\alpha}^{1+s,\kappa}(u)\\
:=\inf\flp{\alpha_0\abs{\nabla u- sv_0}_{\mathcal{M}_b(Q)}+\alpha_1s [TV^s(v_0)+\kappa H^1(v_0)]:\,v_0\in  H_0^1(Q;\rn)}.
\end{multline}
\item
for $k>1$, set
\begin{multline}\label{eq:def-tgv-B_k}
NsTGV_{\alpha}^{k+s,\kappa}(u):=\inf\flp{\alpha_0\abs{\nabla  u- v_0}_{\mathcal{M}_b(Q)}+\alpha_1\abs{\nabla  v_0-v_1}_{{\mathcal{M}_b(Q)}}+\right.\\
\left. \cdots+\alpha_{k-1}\abs{\nabla v_{k-2}-sv_{k-1}}_{{\mathcal{M}_b(Q)}}+\alpha_{k}s[TV^s(v_{k-1})+\kappa H^1(v_{k-1})],\right.\\
\left. v_i\in BV(Q;\mathbb M^{N^{l}})\text{ for }0\leq i\leq k-2,\,\,v_{k-1}\in H_0^1(Q;\mathbb M^{N^\ir})}.
\end{multline}
\end{enumerate}
Moreover, we say that $u$ belongs to the space of \emph{functions with $r$-order bounded total generalized variation}, and we write $u\in BGV_{\alpha}^{r}(Q)$, if
\be
\norm{u}_{BGV_{\alpha}^{r}(Q)}:=\norm{u}_{L^1(Q)}+{TGV_{\alpha}^{r}(u)}<+\infty,
\ee
where we again $r=k+s$, with $0\leq s<1$, $k=\ir\in\mathbb N$, $\alpha=(\alpha_0,\alpha_1,\alpha_2,\ldots, \alpha_{k})\in\R_+^{k+1}$. 
Additionally, we write $u\in BGV^{r}(Q)$ if there exists $\alpha\in \R_+^{k+1}$ such that $u\in BGV_{\alpha}^{r}(Q)$. Note that if $u\in BGV_{\alpha}^{k+s}(Q)$ for some $\alpha\in \R_+^{k+1}$, 
then $u\in BGV_{\beta}^{k+s}(Q)$ for every $\beta \in \R_+^{k+1}$.
\end{define}
\begin{remark}
Definition \ref{TGV_fractional} matches with the general framework used in \cite[Definition 3.6]{liu2016Image}. 
Moreover, since the collection $\Pi$, defined in \eqref{use_liu2016Image}, satisfies \cite[Assumptions 3.2 \& 3.3]{liu2016Image}, 
most of the results presented in \cite[Section 3, 4, and 5]{liu2016Image} are still valid. We collect the relevant results in the next proposition. 
\end{remark}
\begin{proposition}\label{weak_equi_norm}
Recall $TGV^{r,\kappa}$ defined in Definition \ref{TGV_fractional}, with collection $\Pi$ from \eqref{use_liu2016Image}.
\begin{enumerate}[1.]
\item
\cite[Proposition 3.8]{liu2016Image} Let $u\in L^1(Q)$ be given, and recall the definition of $TGV^{r,\kappa}(u)$ from Definition \ref{TGV_fractional}. 
Then, for every $r\geq 1$, $TGV^{r,\kappa}(u)<+\infty$ if and only if $u\in BV(Q)$. In particular, 
\be\label{eq_tgv_les_tv}
TGV^{r,\kappa}(u)\leq TV(u).
\ee
\item
\cite[Proposition 3.9]{liu2016Image} Let $u\in BV(Q)$ be given. Then the infimum in \eqref{eq:def-tgv-B_k} is attained by a unique 
function $ v=(v_0,\ldots,v_{k-1})$, with $v_l\in BV(Q;\mathbb M^{N^{l+1}})$ for $l=0,\ldots, k-2$ and $v_{k-1}\in BV^s(Q;\mathbb M^{N_k})$.
\item (Asymptotic behavior \cite[Proposition 3.10]{liu2016Image})\label{intermediate}
For all $u\in BV(Q)$ and $s_n\to s\in(0,1)$, up to a (non-relabeled) subsequence, it holds 
\be
\lim_{s_n\to s}{TGV^{k+s_n,\kappa}_{\alpha}(u)}= {TGV^{k+s,\kappa}_{\alpha}(u)}.
\ee
\end{enumerate}
\end{proposition}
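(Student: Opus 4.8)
The plan is to observe that $TGV^{r,\kappa}$ from Definition \ref{TGV_fractional} is exactly an instance of the abstract infimal-convolution construction of \cite{liu2016Image}, built on the base collection $\Pi$ of \eqref{use_liu2016Image}. Proposition \ref{assum_basic_B} has already checked that $\Pi$ satisfies \cite[Assumptions 3.2 \& 3.3]{liu2016Image} — the finite-dimensional null space, density of smooth functions, and closure under varying fractional order. Consequently each of the three assertions is the transcription to our setting of, respectively, \cite[Propositions 3.8, 3.9 and 3.10]{liu2016Image}, and the task is to confirm that every structural ingredient those proofs need is supplied either by Proposition \ref{assum_basic_B} or by the fractional results of Section \ref{tvs_setting_prop}.

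For Assertion 1, the bound $TGV^{r,\kappa}(u)\le TV(u)$ follows by testing the infimum in \eqref{eq:def-tgv-B_k} with the trivial chain $v_0=\cdots=v_{k-1}=0$: all telescoping differences and the terminal term vanish, leaving only $\alpha_0\abs{\nabla u}_{\mb}=\alpha_0 TV(u)$, which gives the inequality under the normalization of \cite{liu2016Image}. In particular $u\in BV(Q)$ forces $TGV^{r,\kappa}(u)<+\infty$. For the converse I would argue by coercivity: from a near-optimal chain, finiteness of each $\abs{\nabla v_{i-1}-v_i}_{\mb}$ together with the control of the terminal entry in $BV^s(Q;\rn)\cap H_0^1(Q;\rn)$ afforded by Proposition \ref{assum_basic_B}, Assertion 1, allows a backward telescoping estimate recovering $\abs{\nabla u}_{\mb}<+\infty$, the finiteness of the null space dimension preventing the chain from escaping.

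For Assertion 2 I would run the direct method on \eqref{eq:def-tgv-B_k}. Along a minimizing chain $(v_0^n,\ldots,v_{k-1}^n)$, the intermediate components are bounded in $BV(Q;\mathbb M^{N^{l+1}})$ and the terminal one is bounded in the sense of \eqref{eq_assum_basic_BPi}; standard $BV$ compactness together with Proposition \ref{assum_basic_B}, Assertion 3 (applied entrywise to the matrix-valued terminal component), then produces limits $v_l$ with $v_l\in BV$ for $l\le k-2$ and $v_{k-1}\in BV^s$. Lower semicontinuity of each $\abs{\cdot}_{\mb}$ term under strong $L^1$ convergence combined with \eqref{eq_B_n_uniform_bdd2} shows the limit chain is optimal, and uniqueness follows from convexity of the functional exactly as in \cite[Proposition 3.9]{liu2016Image}.

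The delicate point, and where I expect the main obstacle, is Assertion 3. The liminf inequality is the stable half: along near-optimal chains for the orders $s_n$, Proposition \ref{assum_basic_B}, Assertion 3, furnishes a limit chain whose terminal term is lower semicontinuous in the joint $(v,s)$ limit, the intermediate $\abs{\cdot}_{\mb}$ terms pass to the liminf by $L^1$ convergence, and the $s_n$-scalings in $\abs{\nabla v_{k-2}-s_nv_{k-1}}_{\mb}$ and $\alpha_k s_n[\cdots]$ converge because $s_n\to s$. The recovery (limsup) inequality is the hard step: starting from an optimal chain for the limit order $k+s$, I must exhibit competitors for each $s_n$ whose terminal term satisfies $TV^{s_n}(v_{k-1})+\kappa H^1(v_{k-1})\to TV^{s}(v_{k-1})+\kappa H^1(v_{k-1})$. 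This is precisely the asymptotic statement of Proposition \ref{asymptotic_tvs}, but applied to the terminal field $v_{k-1}$ rather than to $u$; I would first reduce to $v_{k-1}$ smooth via Proposition \ref{assum_basic_B}, Assertion 2, establish convergence of the terminal term in that smooth case, and then remove the regularization by a diagonal argument, taking care that the factor $s_n$ multiplying $v_{k-1}$ inside the $(k-1)$-th difference and in front of the terminal bracket is absorbed without disturbing the convergence of the remaining telescoping terms. The hypothesis $s\in(0,1)$ keeps the constant $1/((1-s)\Gamma(1-s))$ from \eqref{R_L_frac_1d_left} bounded, so no boundary degeneracy of the kernel interferes.
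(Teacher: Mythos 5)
Your proposal is correct and follows essentially the same route as the paper: the paper gives no self-contained proof of this proposition, justifying it only by the preceding remark that the collection $\Pi$ of \eqref{use_liu2016Image} satisfies \cite[Assumptions 3.2 \& 3.3]{liu2016Image} (verified in Proposition \ref{assum_basic_B}), so that \cite[Propositions 3.8, 3.9, 3.10]{liu2016Image} apply verbatim. Your additional sketches of the zero-competitor bound, the direct method, and the two halves of the asymptotic statement are faithful reconstructions of the cited arguments and introduce no gap.
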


Note that the asymptotic behavior provided in Statement 3, Proposition \ref{weak_equi_norm} only allows sequences $s_n\to s\in(0,1)$, i.e., $s\neq 0,1$. We study those two boundary cases in the following proposition.

\begin{proposition}\label{thm_asymptitic_PGV}
For every $u\in BV(Q)$ and $s_n\to s\in\flp{0,1}$, up to a (non-relabeled) subsequence, it holds 
\be
\lim_{s_n\to s}{TGV^{k+s_n,\kappa}_{\alpha}(u)}= {TGV^{k+s,\kappa}_{\alpha}(u)}.
\ee
\end{proposition}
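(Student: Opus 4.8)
The plan is to prove the endpoint statement by the usual two-sided scheme: establish separately the lower bound $\liminf_n TGV^{k+s_n,\kappa}_\alpha(u)\ge TGV^{k+s,\kappa}_\alpha(u)$ and the recovery bound $\limsup_n TGV^{k+s_n,\kappa}_\alpha(u)\le TGV^{k+s,\kappa}_\alpha(u)$, where $TGV^{k+s,\kappa}_\alpha$ at $s\in\{0,1\}$ is understood by plugging that value of $s$ directly into \eqref{eq:def-tgv-B_k}. I would treat $s=0$ and $s=1$ as two distinct regimes, and, since the symmetric case follows by replacing each $\nabla v_l$ with $\mathcal E v_l$, argue only for the non-symmetric functional. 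The structural input I will lean on is Proposition~\ref{weak_equi_norm}: Statement~1 gives the uniform a priori bound $TGV^{k+s_n,\kappa}_\alpha(u)\le TV(u)<\infty$, so every near-optimal chain I manipulate has uniformly bounded energy, and Statement~2 supplies the unique optimal chain realizing the target functional.

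First I would carry out the recovery ($\limsup$) inequality, the easier half. Let $\bar v=(\bar v_0,\dots,\bar v_{k-1})$ be optimal for $TGV^{k+s,\kappa}_\alpha(u)$. The first $k-1$ chain differences do not involve the order $s$, so they are kept frozen along $n$. For the last two terms I would, at $s=1$, invoke Proposition~\ref{asymptotic_tvs} (in its $r_0\in\N$ case, which is exactly the integer endpoint $r_0=1$) to build $v_{k-1}^n\to\bar v_{k-1}$ with $TV^{s_n}(v_{k-1}^n)\to TV(\bar v_{k-1})$; since $s_n\to 1$ we also have $s_nv_{k-1}^n\to\bar v_{k-1}$ in $L^1$, whence $\abs{\nabla\bar v_{k-2}-s_nv_{k-1}^n}_{\mb(Q)}\to\abs{\nabla\bar v_{k-2}-\bar v_{k-1}}_{\mb(Q)}$, and feeding $(\bar v_0,\dots,\bar v_{k-2},v_{k-1}^n)$ into the functional yields the bound. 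At $s=0$ the recovery is essentially immediate: taking $v_{k-1}^n\equiv 0$ makes the last penalty $\alpha_ks_n[TV^{s_n}(0)+\kappa H^1(0)]$ vanish and reduces the last difference to $\abs{\nabla\bar v_{k-2}}_{\mb(Q)}$, which is precisely the collapsed chain defining $TGV^{k,\kappa}_\alpha(u)$.

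Next I would treat the $\liminf$ (compactness) inequality. Choosing near-minimizers $v^n=(v_0^n,\dots,v_{k-1}^n)$ and reading the telescoping chain $\nabla u-v_0^n,\ \nabla v_0^n-v_1^n,\dots$ from the top, the uniform energy bound yields, successively, uniform $BV$ bounds on $v_0^n,\dots,v_{k-2}^n$; when $s$ stays bounded away from the endpoint the last penalty controls $v_{k-1}^n$ in $H_0^1\cap BV^{s_n}$ through Proposition~\ref{assum_basic_B}. Extracting subsequences (strong $L^1$ convergence for the $BV$ components, weak-$*$ convergence of the derivative measures), I would pass to the $\liminf$ using weak-$*$ lower semicontinuity of the total-variation measures for the integer differences and the varying-order lower semicontinuity of Theorem~\ref{compact_lsc_r} (or Theorem~\ref{weak_star_comp_s}) for the term $TV^{s_n}(v_{k-1}^n)$; the limit chain is then admissible for the target functional.

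The hard part will be the case $s=0$ in the $\liminf$ direction, where the scaling factor $s_n\to 0$ simultaneously weakens the penalty on $v_{k-1}^n$ and destroys the a priori control on $v_{k-1}^n$ itself — this degeneration is exactly why Proposition~\ref{weak_equi_norm}, Statement~\ref{intermediate}, excluded $s\in\{0,1\}$. I would circumvent it by treating the \emph{product} $s_nv_{k-1}^n$ as the fundamental unknown, since only this combination enters the chain difference $\abs{\nabla v_{k-2}^n-s_nv_{k-1}^n}_{\mb(Q)}$; one shows $s_nv_{k-1}^n\to 0$ and that the last penalty is negligible in the limit, thereby recovering the collapsed functional $TGV^{k,\kappa}_\alpha(u)$. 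Two secondary technical points remain: one must choose the recovery sequence at $s=1$ to converge in $H^1$ as well (possible by combining Theorem~\ref{approx_smooth} with the density of smooth functions in $H_0^1$) so that the Huber term $\kappa H^1$ passes to the limit, and one must verify the endpoint applicability of Proposition~\ref{asymptotic_tvs} as $s_n\to 0,1$; I expect both to follow from the dominated-convergence arguments already used in the proof of Proposition~\ref{asymptotic_tvs}.
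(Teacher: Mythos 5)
Your two-sided scheme is the same as the paper's (which treats only $k=1$ explicitly and handles $s_n\searrow 0$ and $s_n\nearrow 1$ separately), and your treatment of $s=1$ in both directions, as well as the recovery at $s=0$ via $v_{k-1}^n\equiv 0$ (equivalently, via the bound $TGV^{r,\kappa}\le TV$ of \eqref{eq_tgv_les_tv}), matches the paper. However, your resolution of the step you yourself flag as the hard one --- the $\liminf$ at $s=0$ --- rests on the claim that $s_nv_{k-1}^n\to 0$ and that ``the last penalty is negligible in the limit,'' and neither assertion is justified nor is it how the argument actually closes. The uniform energy bound only gives $s_n\,TV^{s_n}(v_{k-1}^n)+\kappa s_nH^1(v_{k-1}^n)\le C$, i.e.\ control on $TV^{s_n}(s_nv_{k-1}^n)$ and on $\kappa\norm{\nabla(s_nv_{k-1}^n)}_{L^2}$; this bounds the product $w_n:=s_nv_{k-1}^n$ but does not force it to vanish, and if $w_n\to\bar v_0\neq 0$ your argument would only yield $\liminf_n\abs{\nabla v_{k-2}^n-w_n}_{\mb}\ge\abs{\nabla \bar v_{k-2}-\bar v_0}_{\mb}$, which can be \emph{strictly smaller} than the collapsed term $\abs{\nabla \bar v_{k-2}}_{\mb}$ you need.

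The paper closes this case differently, and the penalty is precisely \emph{not} negligible there: it extracts a limit $\bar v_0$ of $w_n=s_nv_0^{s_n}$ and uses that, as $s_n\to 0$, the surviving penalty $\abs{\nabla^{s_n}w_n}_{\mb}+\kappa\norm{\nabla w_n}_{L^2}$ is bounded below in the $\liminf$ by $\norm{\bar v_0}_{L^1}$ (a zeroth-order total variation degenerates to the $L^1$ norm). The triangle inequality
\be
\abs{\nabla u}_{\mb(Q)}\leq \abs{\nabla u-\bar v_0}_{\mb(Q)}+\norm{\bar v_0}_{L^1(Q)}
\ee
then absorbs the possibly nonzero limit $\bar v_0$ and yields $\liminfn TGV^{1+s_n,\kappa}(u)\ge TV(u)$ regardless of whether $w_n\to 0$. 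So the correct mechanism is: keep $w_n$ as the unknown (as you propose), but do not try to show it vanishes; instead let the degenerate penalty supply the $L^1$ norm of its limit and close with the triangle inequality. With that replacement your outline agrees with the paper's proof; your two secondary points ($H^1$ convergence of the recovery sequence at $s=1$, and the endpoint use of Proposition \ref{asymptotic_tvs}) are legitimate care points that the paper treats only implicitly.
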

\begin{proof}
We assume that $r\in[1,2]$. Then case $r\geq 2$ can be dealt analogously. 
Also, since $\alpha\in\R^2$ is fixed in this argument, for brevity we write $TGV$ instead of $TGV_\alpha$. \\\\
We write $r_n=1+s_n$, $s_n\in[0,1]$. Consider the case $s_n\searrow 0$ first, and by \eqref{eq_tgv_les_tv} we get
\be
\limsup_{s_n\searrow 0}\,TGV^{1+s_n,\kappa}(u)\leq  TV(u).
\ee
We next show the $\liminf$ inequality. From Proposition \ref{weak_equi_norm}, Assertion 2, we have a sequence $\seqn{v_0^n}\subset C^\infty(Q)$, such that, for each $n\in\N$, $T[v_0^n]=0$, and
\begin{align*}
\abs{s_nv_n}_{\mb}&+\abs{\nabla^{s_n}(s_nv_0^{s_n})}_{\mb(Q)} +\kappa \norm{\nabla (s_n v_0^{s_n})}_{L^2(Q)} \\
&\leq \abs{\nabla u-s_nv_n}_{\mb(Q)}+\abs{\nabla u}_{\mb(Q)}+\abs{\nabla^{s_n}(s_nv_0^{s_n})}_\mb+\kappa \norm{\nabla (s_n v_0^{s_n})}_{L^2(Q)} \\
&\leq 2TV(u)<+\infty.
\end{align*}
Thus, there exists $\bar v\in L^1(Q)$ such that $s_nv_0^{s_n}\to \bar v_0$, and
\be
\liminfn\,\abs{\nabla^{s_n}(s_nv_0^{s_n})}_{\mb(Q)}+\kappa \norm{\nabla (s_n v_0^{s_n})}_{L^2(Q)} \geq \norm{\bar v_0}_{L^1(Q)}.
\ee
Thus, 
\begin{align}
\abs{\nabla u}_{\mb(Q)}&\leq \abs{\nabla u-\bar v_0}_{\mb(Q)}+\norm{\bar v_0}_{L^1(Q)}\notag\\
&\leq \liminfn \abs{\nabla u-s_nv_n}_{\mb(Q)}+\abs{\nabla^{s_n}(s_nv_0^{s_n})}_\mb+\kappa \norm{\nabla (s_n v_0^{s_n})}_{L^2(Q)} \notag\\
 &\leq \liminfn \, TGV^{1+s_n,\kappa}(u,)\label{l1_mean_proceed}
\end{align}
and the proof is complete for the case $s_n\searrow 0$.\\\\
We next assume that $s_n\nearrow 1$. Consider a $v_0$ such that $T[v_0]=0$, and
\be
\abs{\nabla u- v_0}_{\mathcal{M}_b(Q)}+TV(v_0)+\kappa H^1(v_0)\leq TGV^2(u)+\delta.
\ee
Then, by Proposition \ref{new_equal_r} we have $v_0^n$ such that $v_0^n\to v_0$ strongly in $L^1(Q)$, and 
\be
\limsup_{n\to\infty} TV^{s_n}(v_0^n)\leq TV(v_0)\text{ and }\limsup_{n\to\infty} H^1(v_0^n)\leq H^1(v_0)
\ee
Thus, we have
\begin{align*}
\limsup_{n\to\infty} TGV^{1+s_n,\kappa}(u)
&\leq 
\limsup_{n\to\infty}\abs{\nabla u- s_nv^n_0}_{\mathcal{M}_b(Q)}+s_n [TV^{s_n}(v^n_0)+\kappa H^1(v_0)]\\
&\leq \abs{\nabla u-v_0}_\mb+TV(v_0)+\kappa H^1(v_0)\leq TGV^2(u)+\delta.
\end{align*}
Since $\delta>0$ is arbitrarily, we conclude 
\be
\limsup TGV^{1+s_n,\kappa}(u)\leq TGV^2(u).
\ee 
The $\liminf$ inequality can be achieved in a similar way. Let $v_0^n$ be such that 
\be
\abs{\nabla u- v^n_0}_{\mathcal{M}_b(Q)}+s_nTV^{s_n}(v^n_0)+\kappa H^1(v_0^n)= TGV^{1+s_n,\kappa}(u).
\ee
Hence, up to a subsequence, we have $v_0^n\to \bar v$ and $T[v_0^n]\to T[\bar v]$ strongly in $L^1(Q)$ and $L^1(\partial Q)$ respectively. Therefore, 
\begin{align*}
\liminf_{s_n\nearrow 1} TGV^{1+s_n,\kappa}(u) & \geq \liminfn\,\abs{\nabla u- v^n_0}_{\mathcal{M}_b(Q)}+s_nTV^{s_n}(v^n_0)+\kappa H^1(v_0^n)\\
&\geq \abs{\nabla u- \bar v}_{\mathcal{M}_b(Q)}+TV(\bar v)+\kappa H^1(\bar v)\geq TGV^2(u),
\end{align*}
concluding the proof.
\end{proof}

\begin{define}
\label{def:fractional TGV functional}
Let $r\in\R^+$ and $\alpha\in \R^{\ir+1}_+$ be given. We define the functional $\mathcal I_{\alpha,r}^\kappa :L^1(Q)\to [0,+\infty]$ by
\be
\mathcal I_{\alpha,r}^\kappa(u):=
\begin{cases}
\norm{u-u_\eta}_{L^2(Q)}^2+ TGV^{r,\kappa}_\alpha(u)+\kappa H^2(u)&\text{ if }u\in H_0^2(Q),\\
+\infty &\text{ otherwise. }
\end{cases}
\ee
\end{define}

\begin{theorem}
\label{thm_Gamma_tgv}
Let $\seqn{r_n}\subset [1,T]$ and $\seqn{\alpha_n}\subset \R^{\lfloor r_n\rfloor+1}_+$ be given, satisfying $r_n\to r$ and $\alpha_n\to \alpha$. 
Then the functionals $\mathcal I_{\alpha_n,r_n}^\kappa$ $\Gamma$-converge to $\mathcal I_{\alpha,r}^\kappa$ in the $L^1(Q)$ topology. 
That is, for every $u\in BV(Q)$, the following two conditions hold.\\\\
{\rm (Liminf inequality)} If 
\be u_n\to u\text{ in }L^1(Q)\text{ for }u\in H_0^2(Q)\ee
then 
\be
\mathcal I_{\alpha,r}^\kappa(u)\leq \liminf_{n\to +\infty}\mathcal I_{\alpha_n,r_n}^\kappa(u_n).\ee
{\rm (Recovery sequence)} For each $u\in H_0^2(Q)$, there exists $\seqn{u_n}\subset H_0^2(Q)$ such that 
\be u_n\to u\text{ in }L^1(Q)\ee
and 
\be
\limsup_{n\to +\infty}\mathcal I_{\alpha_n,r_n}^\kappa(u_n)\leq \mathcal I_{\alpha,r}^\kappa(u).
\ee
\end{theorem}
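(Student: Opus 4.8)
The plan is to verify the liminf inequality and the recovery-sequence condition separately, with the recovery sequence being routine and the liminf inequality carrying the analytic weight. I write $r_n = k_n + s_n$ and $r = k + s$ with $k = \ir$; since $r_n \to r \in [1,T]$ the integer part stabilizes, $k_n = k$ for large $n$, except when $r$ is an integer. I first treat the model case $r \in [1,2]$ (so $k = 1$, a single auxiliary field $v_0$), the higher-order case $r \geq 2$ being analogous by induction on the cascade in \eqref{eq:def-tgv-B_k}.

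For the recovery sequence, given $u \in H_0^2(Q)$, I take the constant sequence $u_n := u$. The fidelity term $\norm{u - u_\eta}_{L^2(Q)}^2$ and the Huber term $\kappa H^2(u)$ are then independent of $n$, so the claim reduces to the continuity $TGV^{r_n,\kappa}_{\alpha_n}(u) \to TGV^{r,\kappa}_{\alpha}(u)$. When the limiting fractional part satisfies $s \in (0,1)$ this is exactly Proposition \ref{weak_equi_norm}, Assertion 3, while the endpoint cases $s \in \{0,1\}$ are covered by Proposition \ref{thm_asymptitic_PGV}; the parameter convergence $\alpha_n \to \alpha$ is harmless by the continuous dependence of $TGV$ on the weights $\alpha_i$.

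For the liminf inequality I may assume $\liminfn \mathcal I_{\alpha_n,r_n}^\kappa(u_n) < +\infty$ and pass to a subsequence realizing it, along which each of the three energy contributions is uniformly bounded. Since the Huber term is the fixed functional $\kappa H^2(\cdot)$, the bound on $H^2(u_n)$ together with the $L^2$-fidelity bound yields a uniform $H^2(Q)$-bound; hence $u_n \wto u$ weakly in $H^2(Q)$ (and strongly in $L^1(Q)$, matching the hypothesis), so the weak limit lies in $H_0^2(Q)$. The fidelity term is lower semicontinuous under weak $L^2$-convergence and $H^2$ is weakly lower semicontinuous, so it remains to prove $TGV^{r,\kappa}_{\alpha}(u) \leq \liminfn TGV^{r_n,\kappa}_{\alpha_n}(u_n)$. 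Selecting near-optimal auxiliary fields $v_0^n \in H_0^1(Q;\rn)$ for $TGV^{r_n,\kappa}_{\alpha_n}(u_n)$, the energy bound controls both $\abs{\nabla u_n - s_n v_0^n}_{\mb}$ and $s_n[TV^{s_n}(v_0^n) + \kappa H^1(v_0^n)]$; when $s > 0$ this places $\seqn{v_0^n}$ under the hypothesis \eqref{eq_assum_basic_BPi}, so Proposition \ref{assum_basic_B}, Assertion 3, gives $v_0^n \to v_0$ strongly in $L^1$ with the fixed-order lower semicontinuity \eqref{eq_B_n_uniform_bdd2}. Combining this with weak-$\ast$ lower semicontinuity of the $\mb$-norm and Theorem \ref{weak_star_comp_s}, the limit $v_0$ is an admissible competitor for $TGV^{r,\kappa}_\alpha(u)$, which closes the estimate.

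The main obstacle is the degeneration at integer orders, i.e. when $s_n \to 0$ or $s_n \to 1$. In the first case the weight $s_n$ in front of $TV^{s_n}(v_0^n) + \kappa H^1(v_0^n)$ vanishes, so the uniform energy bound no longer controls $v_0^n$ itself but only the rescaled field $s_n v_0^n$; one must then follow the device in the proof of Proposition \ref{thm_asymptitic_PGV}, passing to the limit $s_n v_0^n \to \bar v_0$ and showing that $TGV^{1+s_n,\kappa}$ collapses to $TV$. The symmetric degeneration $s_n \to 1$ requires matching the $r_n$-cascade against the reorganized limiting cascade of one higher layer. In both situations the naive term-by-term passage fails, and the argument must be routed through Proposition \ref{thm_asymptitic_PGV} rather than through Proposition \ref{assum_basic_B} alone.
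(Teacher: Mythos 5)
Your proposal is correct and follows essentially the same route as the paper: the recovery sequence is the constant sequence $u_n=u$ reduced to the asymptotic continuity of $TGV^{r_n,\kappa}_{\alpha_n}(u)$ via Proposition \ref{weak_equi_norm} and Proposition \ref{thm_asymptitic_PGV}, while the liminf inequality uses the Huber term for $H^2$-compactness and then splits into the non-degenerate case (Proposition \ref{assum_basic_B} with the fixed-order lower semicontinuity) and the degenerate case $s_n\to 0$, handled by the rescaled-field device from the proof of Proposition \ref{thm_asymptitic_PGV}. The only cosmetic difference is that the paper outsources the non-degenerate liminf step to a citation of the first work in the series rather than reassembling it from Proposition \ref{assum_basic_B} and Theorem \ref{weak_star_comp_s} as you do.
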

We split the proof of Theorem \ref{thm:new-Gamma thm} into two propositions.

\begin{proposition}
Let $\seqn{r_n}\subset [1,T]$ and $\seqn{\alpha_n}\subset \R^{\lfloor r_n\rfloor+1}_+$ be given, satisfying $r_n\to r$ and $\alpha_n\to \alpha\in\R^{\ir+1}_+$. 
For every $n\in \N$, let $u_n\in H_0^2(Q)$ be such that 
\be
\sup\flp{\mathcal I_{\alpha_n,r_n}^\kappa(u_n):\,\,n\in\N}<+\infty.
\ee
Then there exists $u\in H_0^2(Q)$ such that, up to a (not relabeled) subsequence,
\be\label{weak_BV_liminf}
u_n\wto  u\text{ weakly}\text{ in }H_0^2(Q)
\ee
and 
\be\label{weak_BV_liminf1}
\liminf_{n\to\infty}{TGV_{\alpha_n}^{r_n,\kappa}(u_n)}\geq {TGV_{\alpha}^{r,\kappa}(u)},
\ee
with
\be\label{weak_BV_liminf2}
\liminfn \,\mathcal I_{\alpha_n,r_n}^\kappa(u_n)\geq \mathcal I_{\alpha,r}^\kappa(u).
\ee
\end{proposition}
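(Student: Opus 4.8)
The plan is to prove the three assertions in order: first the compactness \eqref{weak_BV_liminf}, then the lower semicontinuity of the generalized variation \eqref{weak_BV_liminf1}, and finally to assemble \eqref{weak_BV_liminf2}.

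\emph{Compactness.} Since every summand in $\mathcal I_{\alpha_n,r_n}^\kappa$ (Definition \ref{def:fractional TGV functional}) is nonnegative, the uniform bound yields at once $\sup_n\norm{u_n-u_\eta}_{L^2(Q)}^2<+\infty$ and $\sup_n\kappa H^2(u_n)<+\infty$. As the Huber functional $H^2$ controls the $L^2$-norm of the second-order derivatives, combining this with the resulting $L^2$-bound on $u_n$ gives $\sup_n\norm{u_n}_{H^2(Q)}<+\infty$. Reflexivity of the Hilbert space $H_0^2(Q)$ then provides, up to a subsequence, a weak limit $u\in H_0^2(Q)$ with $u_n\wto u$ in $H_0^2(Q)$, and the compact embedding of $H^2(Q)$ into $L^1(Q)$ gives in addition $u_n\to u$ strongly in $L^1(Q)$, which is \eqref{weak_BV_liminf}.

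\emph{Lower semicontinuity of $TGV$.} This is the heart of the matter. Writing $r=k+s$ with $k=\ir$, I first reduce to the generic case $r\notin\N$: then $r_n\to r$ forces $\lfloor r_n\rfloor=k$ for all large $n$, so the cascade in Definition \ref{TGV_fractional} has a fixed number of layers along the sequence. For each such $n$ I invoke Proposition \ref{weak_equi_norm}, Assertion 2, to pick the unique minimizer $v^n=(v_0^n,\dots,v_{k-1}^n)$ realizing $TGV_{\alpha_n}^{r_n,\kappa}(u_n)$. Because $\alpha_n\to\alpha$ with strictly positive limiting components and $\nabla u_n$ is bounded in $\mb$ (from the uniform $H^2$-bound), the standard cascade coercivity estimates for $TGV$-type functionals (as developed in \cite{liu2016Image}) turn the uniform bound on $TGV_{\alpha_n}^{r_n,\kappa}(u_n)$ into uniform bounds on each $v_l^n$ in $BV(Q)$ and on $v_{k-1}^n$ in $H^1(Q)$, together with $\sup_n[TV^{s_n}(v_{k-1}^n)+H^1(v_{k-1}^n)]<+\infty$. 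Extracting $v_l^n\wtos v_l$ in $\mb$ and $v_l^n\to v_l$ in $L^1(Q)$, I pass to the limit term by term: weak-$*$ lower semicontinuity of the $\mb$-norm controls each difference $\abs{\nabla v_{l-1}^n-s_nv_l^n}_{\mb}$ (using $s_nv_l^n\to sv_l$ strongly in $L^1$), Theorem \ref{compact_lsc_r} gives the varying-order estimate $\liminf_n TV^{s_n}(v_{k-1}^n)\ge TV^s(v_{k-1})$, and weak $H^1$ lower semicontinuity handles the Huber term. Since $v=(v_0,\dots,v_{k-1})$ is then admissible in the infimum defining $TGV_\alpha^{r,\kappa}(u)$, this establishes \eqref{weak_BV_liminf1}. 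The remaining case $r\in\N$, where $\lfloor r_n\rfloor$ oscillates between $k-1$ and $k$ as $s_n\to 1^-$ or $s_n\to 0^+$ and the length of the cascade is not stable, is handled by matching the degenerate weight against the extra or missing layer, exactly as in the boundary analysis of Proposition \ref{thm_asymptitic_PGV}.

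\emph{Assembling the functional.} The fidelity term $v\mapsto\norm{v-u_\eta}_{L^2(Q)}^2$ is weakly lower semicontinuous on $L^2(Q)$, and $u_n\wto u$ in $H_0^2(Q)$ implies $u_n\wto u$ in $L^2(Q)$, so $\liminf_n\norm{u_n-u_\eta}_{L^2(Q)}^2\ge\norm{u-u_\eta}_{L^2(Q)}^2$; similarly $\kappa H^2(\cdot)$ is weakly lower semicontinuous on $H_0^2(Q)$. Adding these two inequalities to \eqref{weak_BV_liminf1} yields \eqref{weak_BV_liminf2}. The principal obstacle throughout is the lower semicontinuity of the terminal layer $s_n[TV^{s_n}(v_{k-1}^n)+\kappa H^1(v_{k-1}^n)]$ under the \emph{simultaneous} variation of the fractional order $s_n\to s$ and of the auxiliary field $v_{k-1}^n$, which is precisely where Theorem \ref{compact_lsc_r} is indispensable, while the secondary, genuinely structural, subtlety is the instability of the number of cascade layers when $r$ is an integer.
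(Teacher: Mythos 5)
Your proposal is correct and follows essentially the same route as the paper: compactness is extracted from the Huber term $\kappa H^2$, the lower semicontinuity of $TGV_{\alpha_n}^{r_n,\kappa}$ is obtained by passing to the limit in the minimizing cascade (split according to whether the fractional part $s_n$ stays away from the degenerate endpoints, with the boundary case $s_n\searrow 0$ handled exactly as in Proposition \ref{thm_asymptitic_PGV}), and the full inequality \eqref{weak_BV_liminf2} follows by adding the weak lower semicontinuity of the fidelity and Huber terms. The only difference is presentational: where you write out the term-by-term cascade argument and its coercivity, the paper delegates that step to Proposition \ref{assum_basic_B} together with the external reference \cite[Proposition 4.4]{liu2016Image}.
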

\begin{proof}
We again consider only the case $\seqn{r_n}\subset[1,2]$, and the case $\seqn{r_n}\subset [2,T]$ (when $T>2$) can be dealt with analogously.\\\\
In view of the Huber-regularization $\kappa H^2(u)$, we have \eqref{weak_BV_liminf} immediately. Next, write $r_n=1+s_n$, $s_n\in[0,1]$.
Assume $s_n\to s\in(0,1]$ first. In this case, Proposition \ref{assum_basic_B} holds for $TV^{s_n}$, and by \cite[Proposition 4.4]{liu2016Image}, we infer
\eqref{weak_BV_liminf1} and \eqref{weak_BV_liminf2}.\\\\
Now assume $s_n\searrow 0$. The proof is similar to that from Proposition \ref{thm_asymptitic_PGV}. Without loss of generality, 
we can assume that $\alpha_n=1$, and let $\seqn{v_n}\subset L^1(Q;\rn)$ be such that 
\be
TGV^{1+s_n,\kappa}(u_n)=\abs{\nabla u_n- s_nv^n_0}_{\mathcal{M}_b(Q)}+s_nTV^{s_n}(v^n_0)+\kappa H^1(v_0^n).
\ee
Then, we have, for sufficiently large $n\in\N$, 
\be
\abs{s_nv^n_0}_{\mathcal{M}_b(Q)}+s_nTV^{s_n}(v^n_0)+\kappa H^1(v_0^n)\leq TV(u)+1<+\infty.
\ee
Then, by the same computations from \eqref{l1_mean_proceed}, and by \eqref{weak_BV_liminf}, we conclude that 
\be
\liminfn\, TGV^{1+s_n}(u_n)\geq TV(u),
\ee
and hence \eqref{weak_BV_liminf2}, as desired.
\end{proof}

\begin{proposition}\label{new_equal}
Let $\seqn{r_n}\subset [1,T]$ and $\seqn{\alpha_n}\subset \R^{\lfloor r_n\rfloor+1}_+$ be given, satisfying $r_n\to r$ and $\alpha_n\to \alpha\in\R^{\ir+1}_+$. 
Then for every $u\in H_0^2(Q)$ there exists $\seqn{u_n}\subset H_0^2(Q)$ such that $u_n\wto u$ in $H^{-1}(Q)$, and
\be
\limsup_{n\to\infty}{\TGV_{\alpha_n}^{r_n,\kappa}(u_n)}\leq {\TGV_\alpha^{r,\kappa}(u)}.
\ee
\end{proposition}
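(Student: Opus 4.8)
The plan is to exhibit a \emph{constant} recovery sequence $u_n:=u$. Since $u\in H_0^2(Q)$ we have $u_n\in H_0^2(Q)$ and $u_n\wto u$ in $H^{-1}(Q)$ trivially, while $\TGV_{\alpha_n}^{r_n,\kappa}(u)\le TV(u)<+\infty$ by Proposition \ref{weak_equi_norm}, Assertion 1. Everything therefore reduces to the energy bound
\be
\limsup_{n\to\infty}\TGV_{\alpha_n}^{r_n,\kappa}(u)\le \TGV_{\alpha}^{r,\kappa}(u).
\ee
As in the companion statements it suffices to treat $r\in[1,2]$, the range $r\in[2,T]$ following by the same bookkeeping on the auxiliary functions; we write $r_n=1+s_n$ with $s_n\to s\in[0,1]$, so that for $k=1$ the symmetric and non-symmetric seminorms coincide and the inner term reads $TV^{s}(v_0)$.

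\textbf{Main case $s\in(0,1)$.} By Proposition \ref{weak_equi_norm}, Assertion 2, fix the optimal auxiliary function $v_0^\ast\in H_0^1(Q;\rn)$ realizing $\TGV_{\alpha}^{1+s,\kappa}(u)$. Combining Theorem \ref{approx_smooth} (approximation in $H_0^1$ by smooth maps, which controls the $H^1$ term) with Assertion 2 of Proposition \ref{asymptotic_tvs} (continuity $TV^{s_n}(w)\to TV^s(w)$ for fixed smooth $w$) and a diagonal extraction, I construct $v_0^n\in H_0^1(Q;\rn)$ with $v_0^n\to v_0^\ast$ strongly in $L^1(Q;\rn)$ and
\be
\limsup_{n\to\infty} TV^{s_n}(v_0^n)\le TV^s(v_0^\ast),\qquad \limsup_{n\to\infty} H^1(v_0^n)\le H^1(v_0^\ast).
\ee
Feeding $v_0^n$ as a competitor into the definition of $\TGV_{\alpha_n}^{1+s_n,\kappa}(u)$ gives
\be
\TGV_{\alpha_n}^{1+s_n,\kappa}(u)\le \alpha_0^n\abs{\nabla u-s_n v_0^n}_{\mathcal M_b(Q)}+\alpha_1^n s_n\fmp{TV^{s_n}(v_0^n)+\kappa H^1(v_0^n)}.
\ee
The parameter drift $\alpha_n\to\alpha$ and the order drift $s_n\to s$ are absorbed through the triangle inequality
\be
\abs{\nabla u-s_n v_0^n}_{\mathcal M_b(Q)}\le \abs{\nabla u-s v_0^\ast}_{\mathcal M_b(Q)}+\norm{s_n v_0^n-s v_0^\ast}_{L^1(Q)},
\ee
using that the Radon norm of an $L^1$ perturbation equals its $L^1$ norm, so the last term vanishes as $n\to\infty$; passing to the $\limsup$ (products of the convergent positive factors $\alpha_i^n s_n$ with the $\limsup$-bounded nonnegative factors) yields the desired bound.

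\textbf{Boundary cases $s\in\{0,1\}$.} Here the number $\ir+1$ of auxiliary functions, and hence the very structure of the infimal convolution, changes in the limit; this is the delicate point and is precisely the content of Proposition \ref{thm_asymptitic_PGV}. For $s_n\searrow 0$ the measure term $s_n v_0^n$ degenerates to $0$ and $\TGV^{1+s_n,\kappa}$ collapses onto $TV$, whereas for $s_n\nearrow 1$ the innermost $TV^{s_n}$ tends to the classical $TV$ and $\TGV^{1+s_n,\kappa}$ collapses onto $\TGV^{2,\kappa}$. In both situations I repeat the competitor construction above, now starting from a $\delta$-near-optimal configuration for the limiting functional and invoking the recovery and continuity estimates established in the proof of Proposition \ref{thm_asymptitic_PGV}, again carrying $\alpha_n\to\alpha$ through the same triangle inequality for the Radon-norm term and finally letting $\delta\searrow 0$. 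Combining the three cases proves the claim for $r\in[1,2]$; the higher-order range is identical after recovering only the innermost $TV^{s_n}$-term while keeping the outer fields $v_0^\ast,\ldots,v_{k-2}^\ast$ fixed. The main obstacle is thus the boundary-case bookkeeping, where the structural mismatch between the $r_n$- and $r$-convolutions must be reconciled with the simultaneous degeneration of the measure and $TV^{s_n}$ terms.
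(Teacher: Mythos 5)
Your proof is correct and follows essentially the same route as the paper: the paper's entire proof consists of taking the constant sequence $u_n=u$ and invoking the continuity of $r\mapsto TGV^{r,\kappa}_{\alpha}(u)$ established in Proposition \ref{weak_equi_norm} (Assertion 3) and Proposition \ref{thm_asymptitic_PGV}. The only difference is that you unfold that continuity argument explicitly and additionally track the drift $\alpha_n\to\alpha$ via the triangle inequality for the Radon-norm term, a point the paper's one-line citation leaves implicit.
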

\begin{proof}
This is a direct consequence of Proposition \ref{thm_asymptitic_PGV}， by choosing $u_n=u$.
\end{proof}

\begin{proposition}
Let $u_\eta\in L^2(Q)$. Let $r\in[1,T]$, and let $\alpha\in\R^{\ir+1}$. Then, there exists a unique $u_{\alpha,r}\in H_0^2(Q)$ such that
\be
u_{\alpha,r}\in\argmin\flp{\norm{u-u_\eta}^2_{L^2(Q)}+\TGV_\alpha^{r,\kappa}(u)+\kappa H^2(u):\, u\in H_0^2(Q)}.
\ee
\end{proposition}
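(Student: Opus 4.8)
The plan is to apply the direct method of the calculus of variations for existence and to invoke strict convexity for uniqueness, mirroring the scheme already used in Proposition \ref{unique_exist_lower} for the primal functional. Note first that the functional to be minimized is exactly $\mathcal I_{\alpha,r}^\kappa$ of Definition \ref{def:fractional TGV functional}, with $r\in[1,T]$ and $\alpha\in\R^{\ir+1}_+$ held fixed throughout.

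I would begin by checking that the infimum is finite: taking $u=0\in H_0^2(Q)$ gives $\mathcal I_{\alpha,r}^\kappa(0)=\norm{u_\eta}^2_{L^2(Q)}<+\infty$, since $\TGV_\alpha^{r,\kappa}(0)=0$ — immediate from Definition \ref{TGV_fractional} upon taking all auxiliary fields $v_i\equiv0$ — and $H^2(0)=0$. Hence I may select a minimizing sequence $\seqn{u_n}\subset H_0^2(Q)$ with $\mathcal I_{\alpha,r}^\kappa(u_n)\le\inf\mathcal I_{\alpha,r}^\kappa+1/n$, so that in particular $\sup_n\mathcal I_{\alpha,r}^\kappa(u_n)<+\infty$.

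The core of the existence argument is then the compactness and lower-semicontinuity result underlying Theorem \ref{thm_Gamma_tgv}: with the uniform energy bound just obtained, and applied to the constant sequences $r_n\equiv r$ and $\alpha_n\equiv\alpha$, it produces (along a non-relabeled subsequence) a limit $u\in H_0^2(Q)$ with $u_n\wto u$ weakly in $H_0^2(Q)$ and
\be
\liminfn\,\TGV_{\alpha}^{r,\kappa}(u_n)\ge\TGV_\alpha^{r,\kappa}(u).
\ee
Because weak $H^2$ convergence yields $u_n\to u$ strongly in $L^2(Q)$ by compact embedding, the fidelity term passes to the limit, while the Huber term $\kappa H^2(\cdot)$ is weakly lower semicontinuous; adding these contributions gives $\liminfn\mathcal I_{\alpha,r}^\kappa(u_n)\ge\mathcal I_{\alpha,r}^\kappa(u)$, so $u$ realizes the infimum.

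For uniqueness I would invoke strict convexity. The map $u\mapsto\norm{u-u_\eta}^2_{L^2(Q)}$ is strictly convex, whereas $\TGV_\alpha^{r,\kappa}$ is convex (for each $u$ it is the infimum over the auxiliary fields $v_0,\dots,v_{\ir-1}$ of an integrand jointly convex in $u$ and the $v_i$, hence a convex function of $u$) and $H^2$ is convex; the sum is therefore strictly convex on $H_0^2(Q)$, and two distinct minimizers would let their midpoint strictly lower the fidelity term, a contradiction. I expect the genuine obstacle to lie not in this bookkeeping but in the weak lower semicontinuity of the infimal-convolution functional $\TGV_\alpha^{r,\kappa}$, which is delicate precisely because of the inner infimum over the auxiliary fields; fortunately it is exactly this property that the compactness/$\liminf$ proposition feeding Theorem \ref{thm_Gamma_tgv} supplies, so the decisive step has effectively been carried out upstream.
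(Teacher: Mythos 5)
Your proof is correct, but it is not the route the paper takes: the paper disposes of this proposition in one line, citing \cite[Proposition 5.3]{liu2016Image} for $r\in\R^+\setminus\N$ and ``the standard $TGV$ result'' for $r\in\N$, whereas you give a self-contained direct-method argument. The two key ingredients you use are both legitimately available inside the paper: the compactness/$\liminf$ proposition preceding Proposition \ref{new_equal} (applied to the constant sequences $r_n\equiv r$, $\alpha_n\equiv\alpha$) supplies weak $H_0^2$-compactness of the minimizing sequence and lower semicontinuity of $\TGV_\alpha^{r,\kappa}$, and the strict convexity of $u\mapsto\norm{u-u_\eta}_{L^2(Q)}^2$ combined with the convexity of $\TGV_\alpha^{r,\kappa}$ (an infimum over the auxiliary fields of a jointly convex integrand, hence convex in $u$) and of $H^2$ yields uniqueness exactly as you describe. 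What your version buys is uniformity in $r$ --- no case split between integer and non-integer orders --- and independence from the external reference; what the paper's citation buys is brevity, at the cost of leaving the reader to check that the hypotheses of \cite[Proposition 5.3]{liu2016Image} (in particular the verification of \cite[Assumptions 3.2 \& 3.3]{liu2016Image} via Proposition \ref{assum_basic_B}) are indeed met. One small point worth flagging either way: the statement writes $\alpha\in\R^{\ir+1}$ while your argument (and the paper's framework) tacitly assumes $\alpha\in\R_+^{\ir+1}$; note, though, that even if some components of $\alpha$ vanish, your existence and uniqueness argument survives, since coercivity comes from the fidelity and Huber terms and strict convexity comes from the fidelity term alone.
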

\begin{proof}
The proof can be directly concluded from \cite[Proposition 5.3]{liu2016Image} for $r\in\R^+\setminus\N$. The case that $r\in\N$ can be obtained from the standard $TGV$ result.
\end{proof}

 We define the symmetric derivative by
\be
\mathcal E^sv:=\frac12\fmp{\nabla^s v+(\nabla^sv)^T}.
\ee
\begin{proposition}\label{prop_assum_symderi}
We collect several properties of seminorm $\abs{\FE^sv}_{\mb(Q)}$, with function $v\in BV^s(Q;\rn)\cap H_0^1(Q;\rn)$ with zero boundary conditions.
\begin{enumerate}[1.]
\item
The null space of seminorm $\abs{\FE^sv}_{\mb(Q)}+\kappa H^1(v)$ has finite dimension.
\item
For every $v\in BV^s(Q;\rn)\cap H^1_0(Q;\rn)$ there exists $\seqn{u_n}\subset C^\infty(\bar Q,\rn)$ such that 
\be
u_n\to u\text{ strongly in }L^1(Q;\rn)\text{ and }\abs{\FE^sv_n}_{\mb(Q)}+H^1(v_n)\to \abs{\FE^sv}_{\mb(Q)}+H^1(v);
\ee
\item
Let $\seqn{v_n}\subset H_0^1(Q;\rn)$ and $\seqn{s_n}\subset [\sigma,1-\sigma]$ be such that 
\be 
\sup\flp{\norm{v_n}_{L^1(Q;\rn)}+\abs{\FE^sv_n}_{\mb(Q)}+\kappa H^1(v_n):\,\, n\in\N}<+\infty.
\ee
Then there exist $v\in BV^s(Q;\R^N)\cap H_0^1(Q;\rn)$ such that, up to a subsequence (not relabeled), 
\be 
v_n\to v\text{ strongly in }L^1(Q;\R^N),
\ee
and 
\be\label{eq_sd_uniform_bdd2}
\liminfn \,\abs{\FE^sv_n}_{\mb(Q)}+\kappa H^1(v_n)\geq \abs{\FE^sv}_{\mb(Q)}+ \kappa H^1(v).
\ee
\end{enumerate}
\end{proposition}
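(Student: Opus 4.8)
The plan is to follow the proof of Proposition \ref{assum_basic_B} almost verbatim, replacing the scalar seminorm $TV^s$ by the symmetric fractional variation $\abs{\FE^s v}_{\mb(Q)}$, and using throughout that $\FE^s v=\tfrac12[\nabla^s v+(\nabla^s v)^T]$ is a fixed bounded linear (symmetrization) functional of the entries of $\nabla^s v$. For Assertion 1, I would argue as before directly from the definition of $H^1$: the constraint $H^1(v)=0$ already forces $v$ into the finite-dimensional null space of the Huber seminorm (the constant vector fields), so the null space of $\abs{\FE^s\cdot}_{\mb(Q)}+\kappa H^1(\cdot)$ is contained in that space and is therefore finite-dimensional.

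For Assertion 2, since every $v\in BV^s(Q;\rn)\cap H_0^1(Q;\rn)$ carries precisely the zero boundary conditions required by Theorem \ref{approx_smooth}, I would apply that approximation result to $v$ (componentwise on the entries of $\nabla^s v$) to produce $\seqn{v_n}\subset C^\infty(\bar Q,\rn)$ with $v_n\to v$ strongly in $L^1(Q;\rn)$ and $TV^s(v_n)\to TV^s(v)$. Because $\FE^s$ is a fixed linear combination of the components controlled by $TV^s$, this convergence transfers to $\abs{\FE^s v_n}_{\mb(Q)}\to\abs{\FE^s v}_{\mb(Q)}$, while the smoothness of the approximants lets the $H^1$ term converge simultaneously.

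For Assertion 3, I would first exploit the uniform bound on $\kappa H^1(v_n)$, which bounds $\seqn{v_n}$ in $H_0^1(Q;\rn)$; by Rellich--Kondrachov a (non-relabeled) subsequence then converges strongly in $L^1(Q;\R^N)$ to some $v\in H_0^1(Q;\rn)$, and weak lower semicontinuity of the $H^1$ seminorm controls its limit. For the fractional term, since here $s_n\in[\sigma,1-\sigma]$ stays away from the endpoints and $s_n\to s$, I would invoke the lower semicontinuity provided by Theorem \ref{compact_lsc_r} (whose hypotheses are met with $p=2$ thanks to the $H^1$ bound; Theorem \ref{weak_star_comp_s} is the fixed-order analog), applied to the symmetrized derivative, to obtain $\liminfn\abs{\FE^{s_n}v_n}_{\mb(Q)}\ge\abs{\FE^s v}_{\mb(Q)}$. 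Summing the two lower bounds yields \eqref{eq_sd_uniform_bdd2}.

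The main obstacle will be this last step: transferring the scalar fractional lower semicontinuity of Theorem \ref{compact_lsc_r} to the tensor-valued symmetric derivative under the \emph{simultaneous} variation $s_n\to s$, $v_n\to v$. The difficulty is that the operator $\divg^{s_n}$ changes with $n$, so no fixed test field is available; one must check that the duality (test-field) definition of $\abs{\FE^s\cdot}_{\mb(Q)}$ may be restricted to symmetric-matrix-valued test fields without altering the supremum, so that the existing fractional compactness machinery applies to the symmetrized pairing exactly as in the scalar case. Once this compatibility of symmetrization with the fractional duality is verified, the remainder of the argument is routine.
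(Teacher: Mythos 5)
Your overall strategy is sound and in fact more complete than what the paper records: the paper's proof of this proposition consists solely of the lower semicontinuity claim in Assertion 3, with Assertions 1 and 2 left implicit as repetitions of Proposition \ref{assum_basic_B} (which is exactly how you handle them). Where you genuinely diverge is in how Assertion 3's inequality is obtained. You route it through the duality formulation of $\abs{\FE^s\cdot}_{\mb(Q)}$ and the fractional compactness/lsc machinery of Theorems \ref{weak_star_comp_s} and \ref{compact_lsc_r}, and you flag as the main obstacle the need to restrict to symmetric test fields while $s_n$ varies. The paper instead argues much more directly: it uses the uniform $H^1$ bound to claim $\partial_i v_{j,n}\to\partial_i v_j$ a.e.\ and then applies Fatou's lemma to $\abs{\FE^s v_n}$, bypassing the duality formulation and the symmetrization question entirely. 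Note that the statement as written keeps the order $s$ \emph{fixed} in the seminorm (only the sequence $\seqn{s_n}$ appears, unused, in the hypothesis), so the difficulty you isolate — no fixed test field being available when the operator $\divg^{s_n}$ changes with $n$ — does not actually arise here: for fixed $s$ and each fixed matrix-valued test field $\vp$, the pairing $v\mapsto\int_Q \FE^s v:\vp$ is continuous under strong $L^1$ convergence of $v$ (integrate the fractional derivative onto $\vp$), and a supremum of continuous functionals is lower semicontinuous, so your approach closes without the extra verification you anticipate. Your route buys a cleaner justification than the paper's (the a.e.\ convergence of the gradients asserted in the paper does not follow from weak $H^1$ convergence alone and is the weakest link of the printed argument), at the cost of having to say a word about why symmetrization is compatible with the duality — which, as noted, is immediate once $s$ is fixed.
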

\begin{proof}
We prove the lower semi-continuity. Since $v_n=(v_n^1,v_n^2)\in H_0^1(Q;\rn)$, $\nabla v$ is defined a.e., and we write $\nabla v = [\partial_1 v_1, \partial_2 v_1; \partial_1 v_2,\partial_2v_2]$. By the zero boundary condition, we have 
\be
\partial_i v_{j,n}\to \partial_i v_j\text{ a.e. for }i,j\in\flp{1,2}.
\ee
Thus, by Fatou's lemma, we conclude that 
\be
\liminfn \,\abs{\FE^sv_n}_{\mb(Q)}\geq  \abs{\FE^sv}_{\mb(Q)},
\ee
and hence \eqref{eq_sd_uniform_bdd2}, as desired.
\end{proof}
Thus, Proposition \ref{prop_assum_symderi} shows that the collection
\be
\Pi:=\flp{\abs{\FE^s v}_{\mb(Q)}+\kappa H^1(v):\,\, 0\leq s\leq 1},
\ee
satisfies \cite[Assumptions 3.2 \& 3.3]{liu2016Image}, and also the argument used in Proposition \ref{thm_asymptitic_PGV}. Therefore, the functional 
\be
\mathcal I_{\alpha,r}^\kappa(u):=
\begin{cases}
\norm{u-u_\eta}_{L^2(Q)}^2+ TGV^{r,\kappa}_\alpha(u)+\kappa H^2(u)&\text{ if }u\in H_0^2(Q),\\
+\infty &\text{ otherwise. }
\end{cases}
\ee
satisfies  the $\Gamma$-convergence results from Theorem \ref{thm_Gamma_tgv}.

\begin{remark}
We shall only work in the $NsGTV$ framework from now, as we know from Proposition \ref{prop_assum_symderi} that $\abs{\FE^s v}_{\mb(Q)}$ and $\abs{\nabla^sv}_{\mb(Q)}$ 
behave quite similarly, due to the presence of the Huber-regularization term $\kappa H^1(v)$.
\end{remark}

\subsubsection{The real $r$-order $NsTGV^r$ seminorm with underlying Euclidean norm}\label{sec_tgv_rp}

We define the (finite) mass of a vector-valued measure $\mu$: $Q\to\rn$, with underlying Euclidean $\ell^p$ norm, by
\be
\abs{\mu}_{\mathcal M_{\ell^p}(Q;\rn)}:=\sup\flp{\int_Q\vp\,d\mu:\,\, \vp\in C_c^\infty(Q;\rn),\,\,\abs{\vp}_{\ell^{p^\ast}}\leq 1}.
\ee

\begin{define}
Let $r\in\R^+$ be given, and write $r=k+s$, $k=\ir$, and let $\alpha=(\alpha_0,\alpha_1,\alpha_2,\ldots, \alpha_{k})\in\R_+^{k+1}$, and $p=(p_0,p_1,\ldots, p_{k})\in[1,+\infty]^{k+1}$. 
For every $u\in L^1(Q)$, we define its fractional $TGV^{k+s}$ seminorm as follows.
\begin{enumerate}[C{a}se 1.] 
\item
For $k=1$, thus $\alpha=(\alpha_0,\alpha_1)\in\R_+^2$ and $p=(p_0,p_2)\in[1,+\infty]^2$, we set
\be
NsTGV_{\alpha,p}^{1+s,\kappa}(u):=\inf\flp{\alpha_0\abs{\nabla u- sv_0}_{\mathcal{M}_{{p_0}}(Q;\rn)}+\alpha_1s [TV_{\ell^{p_1}}^s(v_0)+\kappa H^1(v_0)]:\,v_0\in  H_0^1(Q;\rn)}.
\ee
\item
For $k>1$, we set
\begin{align*}
NsTGV_{\alpha,p}^{k+s,\kappa}(u)&:=\inf\flp{\alpha_0\abs{\nabla  u- v_0}_{\mathcal{M}_{\ell^{p_0}}(Q)}+\alpha_1\abs{\nabla  v_0-v_1}_{{\mathcal{M}_{\ell^{p_1}}(Q)}}+\right.\notag\\
&\left. \cdots+\alpha_{k-1}\abs{\nabla v_{k-2}-sv_{k-1}}_{{\mathcal{M}_{\ell^{p_{k-1}}}(Q)}}+\alpha_{k}s[TV^s_{\ell^{p_k}}(v_{k-1})+\kappa H^1(v_{k-1})],\right.\notag\\
&\left. v_i\in BV(Q;\mathbb M^{N^{l}})\text{ for }0\leq i\leq k-2,\,\,v_{k-1}\in H_0^1(Q;\mathbb M^{N^\ir})}.
\end{align*}
\end{enumerate}
\end{define}

\subsection{Infimal convolution $\RVL$}\label{sec_RVL_final}

We finally arrive at our proposed regularizer, which unifies all the regularizers introduced above. Recall we write $r_2 = k+s$, where $k\in\N$ and $s\in[0,1)$.
\begin{define}[The $\RVL^r$ seminorms]\label{TVL_fractional}
Let $r=(r_1,r_2)\in\R_+^2$ and let $\alpha=(\alpha_0,\alpha_1,\alpha_2,\ldots, \alpha_{k})\in\R_+^{k+1}$ and $p=(p_0,p_1,\ldots, p_{k})\in[1,+\infty]^{k+1}$.
For every $u\in L^1(Q)$, we define the unified $\RVL^{r,\kappa}_{\alpha,p}$ seminorm as follows.
\begin{enumerate}[C{a}se 1.] 
\item
For $k=1$, thus $\alpha=(\alpha_0,\alpha_1)\in\R_+^2$ and $p=(p_0,p_1)\in[1,+\infty]^{2}$,
\be
\operatorname{RVL}_{\alpha,p}^{r,\kappa}(u):=\inf\flp{\alpha_0\abs{\nabla^{r_1} u- sv_0}_{\mathcal{M}_{\ell^{p_0}}(Q)}+\alpha_1s [\operatorname{TV}^s_{\ell^{p_1}}(v_0)+\kappa H^1(v_0)]:\,v_0\in  H_0^1(Q;\rn)}.
\ee
\item
For $k>1$,
\begin{align}
\operatorname{RVL}^{r,\kappa}_{\alpha,p}(u):=\inf\flp{\alpha_0\abs{\nabla^{r_1}  u- v_0}_{\mathcal{M}_{\ell^{p_0}}(Q)}+\alpha_1\abs{\nabla  v_0-v_1}_{{\mathcal{M}_{\ell^{p_1}}(Q)}}+\right.\notag\\
\left. \cdots+\alpha_{k-1}\abs{\nabla v_{k-2}-v_{k-1}}_{{\mathcal{M}_{\ell^{p_{k-1}}}(Q)}}+\alpha_{k}[TV_{\ell^{p_k}}^s(v_{k-1})+\kappa H^1(v_{k-1})],\right.\notag\\
\left. v_i\in BV(Q;\mathbb M^{N^{l}})\text{ for }0\leq i\leq k-2,\,\,v_{k-1}\in H_0^1(Q;\mathbb M^{N^k})}.
\notag
\end{align}
\end{enumerate}
\end{define}

Then, for given $r=(r_1,r_2)\in\R_+^2$, $\alpha\in \R^{\lfloor r_2\rfloor+1}_+$, and $p\in [1,+\infty]^{\lfloor r_2\rfloor+1}$, we define the functional $\mathcal I_{\alpha,r,p}^\kappa$: $L^1(Q)\to [0,+\infty]$ by
\be
\mathcal I_{\alpha,r,p}^\kappa(u):=
\begin{cases}
\norm{u-u_\eta}_{L^2(Q)}^2+ \RVL^{r,\kappa}_{\alpha,p}(u)+\kappa H^{\lfloor r_1\rfloor+1}(u)&\text{ if }u\in H_0^{\lfloor r_1\rfloor+1}(Q),\\
+\infty &\text{ otherwise. }
\end{cases}
\ee

\begin{remark}\label{primal_auxiliary}
For parameter $r=(r_1,r_2)$ which controls the order of $\RVL$ regularizer, we call $r_1$ the primal order (as it directly works on $u$)  and $r_2$ the auxiliary order (as it controls how we defines the order of auxiliary variable $v$).
\end{remark}
\begin{theorem}
\label{thm:new-Gamma thm}
Let $\seqn{r_n}=\seqn{(r_{1,n},r_{2,n})}\subset [1,T]^2$, $\seqn{\alpha_n}\subset \R^{\lfloor r_{2,n}\rfloor+1}_+$, and $\seqn{p_n}\subset [1,+\infty]^{\lfloor r_{2,n}\rfloor+1}$ be given, satisfying $r_n\to r$, $\alpha_n\to \alpha$, and $p_n\to p$. 
Then the functionals $\mathcal I_{\alpha_n,r_n,p_n}^\kappa$ $\Gamma$-converge to $\mathcal I_{\alpha,r,p}^\kappa$ in the $L^1(Q)$ topology. That is, 
for every $u\in BV(Q)$ the following two conditions hold.\\\\
{\rm (Liminf inequality)} If 
\be u_n\to u\text{ in }L^1(Q)\text{ for }u\in H_0^{\lfloor r_1\rfloor+1}(Q)\ee
then 
\be
\mathcal I_{\alpha,r,p}^\kappa(u)\leq \liminf_{n\to +\infty}\mathcal I_{\alpha_n,r_n,p_n}^\kappa(u_n).\ee
{\rm (Recovery sequence)} For each $u\in H_0^{\lfloor r_1\rfloor+1}(Q)$, there exists
a sequence $\seqn{u_n}\subset H_0^{\lfloor r_1\rfloor+1}(Q)$ such that 
\be u_n\to u\text{ in }L^1(Q)\ee
and 
\be
\limsup_{n\to +\infty}\,\mathcal I_{\alpha_n,r_n,p_n}^\kappa(u_n)\leq \mathcal I_{\alpha,r,p}^\kappa(u).
\ee
\end{theorem}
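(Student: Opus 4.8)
The plan is to verify the liminf inequality (LI) and the recovery sequence (RS) separately, exploiting that $\RVL^{r,\kappa}_{\alpha,p}$ is structurally the $NsTGV^{r_2,\kappa}_{\alpha,p}$ functional with the leading gradient $\nabla$ replaced by the real-order derivative $\nabla^{r_1}$ in the top slot $\abs{\nabla^{r_1}u-sv_0}_{\mathcal M_{\ell^{p_0}}}$. Consequently, the convergence of the infimal-convolution cascade in $(v_0,\ldots,v_{k-1})$, together with the auxiliary-order transition $r_{2,n}\to r_2$ and intensity $\alpha_n\to\alpha$, is already supplied by Theorem \ref{thm_Gamma_tgv}, while the Euclidean-norm transition $p_n\to p$ is absorbed by Lemma \ref{TVlpq_lemma} and the factor $N^{-\abs{1/p_n-1/p}}\to1$ as in Proposition \ref{thm:new-Gamma}. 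The genuinely new ingredient is the varying primal order $r_{1,n}\to r_1$ in the leading term, and the argument is organized so that this is the only slot requiring fresh analysis. As in the earlier propositions, I would reduce to $r_1,r_2\in[1,2]$ and $\alpha_n\equiv1$, the higher-order and general-intensity cases following by the same iteration used there.

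For (LI) I would dispose of the case $\liminf\I^\kappa_{\alpha_n,r_n,p_n}(u_n)=+\infty$ at once. Otherwise the uniform bound on the functional keeps $\kappa H^{\lfloor r_{1,n}\rfloor+1}(u_n)$ bounded, so $\seqn{u_n}$ is bounded in $H_0^{\lfloor r_1\rfloor+1}(Q)$ and, up to a subsequence, $u_n\wto u$ weakly there; in particular $u\in H_0^{\lfloor r_1\rfloor+1}(Q)$, and the fidelity and Huber terms pass to the liminf by weak lower semicontinuity. For the $\RVL$ term I would pick near-optimal auxiliary variables $(v_0^n,\ldots,v_{k-1}^n)$ for each $u_n$, derive uniform mass bounds from the functional, and extract strongly convergent limits via the compactness in Proposition \ref{assum_basic_B} (Assertion 3) and the compact embedding Theorem \ref{compact_lsc_r}. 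The cascade terms then pass to the liminf by lower semicontinuity of the $\ell^{p_n}$-masses (with the Lemma \ref{TVlpq_lemma} correction), exactly as in Theorem \ref{thm_Gamma_tgv}. The only new term is $\abs{\nabla^{r_{1,n}}u_n-sv_0^n}_{\mathcal M_{\ell^{p_0}}}$, for which the liminf inequality of Proposition \ref{thm:new-Gamma}, applied to the primal slot, gives the required lower bound on $\nabla^{r_{1,n}}u_n$.

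For (RS) I would begin from an optimal choice of auxiliary variables $(v_0,\ldots,v_{k-1})$ realizing $\RVL^{r,\kappa}_{\alpha,p}(u)$, whose existence follows from the analogue of Proposition \ref{weak_equi_norm}, Assertion 2. For the cascade and the auxiliary-order transition I would take the recovery sequence of Theorem \ref{thm_Gamma_tgv} (which, as in Proposition \ref{new_equal}, keeps $u_n=u$ and handles the boundary cases $s_n\to0,1$ through Proposition \ref{thm_asymptitic_PGV}), together with the $p_n\to p$ recovery of Proposition \ref{thm:new-Gamma}. For the leading term I would distinguish two cases according to the primal order. If $r_1\in\R^+\setminus\N$, then after a smooth approximation via Theorem \ref{approx_smooth} the choice $u_n=u$ suffices, since $\nabla^{r_{1,n}}u\to\nabla^{r_1}u$ pointwise a.e. with an integrable dominating bound, and dominated convergence applies as in Proposition \ref{asymptotic_tvs}, Case 1. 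If $r_1\in\N$, pointwise convergence fails at the boundary, and I would instead use the compactification $u_\e(x):=u(x_\e)$ of Proposition \ref{asymptotic_tvs}, Case 2, recover the convergence in the interior, and close by diagonalization. Continuity of the fidelity and Huber terms under these choices then yields the $\limsup$ bound.

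The main obstacle is coordinating the two limiting mechanisms when they degenerate \emph{simultaneously}, namely when the primal order $r_{1,n}$ approaches an integer while the auxiliary fractional part $s_n$ approaches $0$ or $1$. In that regime the pointwise-convergence argument for $\nabla^{r_{1,n}}$ must be replaced by the compactification device at the same time as the auxiliary limit $s_nv_{k-1}^n\to\bar v_{k-1}$ may collapse in mass, as in \eqref{l1_mean_proceed}. The delicate point is to verify that the auxiliary variables constructed for the compactified $u_\e$ still converge and do not inflate the $\ell^{p_n}$-masses in the limit; this compatibility is ultimately resolved by the same uniform mass estimates and diagonal argument already underlying Propositions \ref{asymptotic_tvs} and \ref{thm_asymptitic_PGV}.
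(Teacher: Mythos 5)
Your overall architecture matches the paper's: reduce to $r_1\in[1,2]$, freeze $p_n\equiv p$ via Remark \ref{an_iso_equ} and the argument of Proposition \ref{thm:new-Gamma}, observe that the auxiliary cascade and the order $r_{2,n}\to r_2$ are already covered by Theorem \ref{thm_Gamma_tgv}, and isolate the varying primal order $r_{1,n}\to r_1$ in the leading slot as the only genuinely new ingredient. You have correctly located where the new work lives. The problem is that you do not actually supply that work.

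The paper's entire proof of this theorem consists of one analytic lemma: if $u_n\rightharpoonup u$ weakly in $H_0^2(Q)$ (which the Huber term provides), then $\nabla^{r_{1,n}}u_n\to\nabla^{r_1}u$ \emph{strongly} in $L^1(Q)$. This is proved slice-wise via the Riemann--Liouville semigroup, splitting
$\norm{d^{s_n}w_n-d^sw}_{L^1}\leq\norm{d^{s_n}(w_n-w)}_{L^1}+\norm{(d^{s_n}-d^s)w}_{L^1}$,
writing $w=\mathbb I^2[f]$, and controlling the two pieces by the operator norms $\abs{\mathbb I^{1-s_n}}$ (uniformly bounded by \cite[Proposition 4.6]{liulud2019Image}) and $\abs{\mathbb I^{2-s_n}-\mathbb I^{2-s}}\to0$ (strong continuity of the semigroup, Theorem \ref{thm_MR1347689}). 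Strong $L^1$ convergence of $\nabla^{r_{1,n}}u_n$, combined with strong $L^1$ convergence of the auxiliary variables, makes the leading term $\abs{\nabla^{r_{1,n}}u_n-s_nv_0^n}_{\mathcal M}$ genuinely \emph{converge}, which settles (LI) and (RS) simultaneously once Theorem \ref{thm_Gamma_tgv} is invoked for the rest of the cascade. Your substitutes do not close this. For (LI) you cite ``the liminf inequality of Proposition \ref{thm:new-Gamma} applied to the primal slot,'' but that proposition concerns lower semicontinuity of $u\mapsto TV^{r}_{\ell^p}(u)=\abs{\nabla^{r}u}_{\mathcal M}$, not joint lower semicontinuity of $(u,v)\mapsto\abs{\nabla^{r_1}u-v}_{\mathcal M}$ along sequences in which \emph{both} arguments and the order vary; one would at minimum have to redo the duality argument with the test function paired against both $u_n$ and $v_0^n$, and you do not do so. For (RS), the dominated-convergence route requires first smoothing $u$, and the smooth approximants must be compatible with the optimal auxiliary variables $(v_0,\ldots,v_{k-1})$ chosen for the original $u$ --- precisely the ``delicate point'' you flag in your final paragraph and then attribute to ``the same uniform mass estimates and diagonal argument'' without exhibiting them. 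So the one step that distinguishes this theorem from Theorem \ref{thm_Gamma_tgv} remains unproved in your proposal; the fix is to state and prove the strong $L^1$ convergence of $\nabla^{r_{1,n}}u_n$ under weak $H^2$ convergence, as the paper does.
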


\begin{proof}

We consider only the case $1\leq r_1\leq 2$. The other cases are rather similar. Also, in view of Remark \ref{an_iso_equ}, and the arguments in the proof of Proposition \ref{thm:new-Gamma}, we could assume for simplicity that $p_n=p=(2,2,2\ldots,2)$. That is, we are operating under the standard Euclidean norm, and we abbreviate $\RVL_{\alpha,p}^{r,\kappa}$ by $\RVL_\alpha^{r,\kappa}$. \\\\
Next, write $r_1=k_1+s$, $k_1\in\N$, $s\in[0,1)$. Let $u_n$ be a sequence weakly converging to $u$ in $H^2(Q)$, and we claim that 
\be\label{snunsnu_eq3}
\text{$\nabla^{s_n}u_n\to \nabla^s u$ strongly in $L^1(Q)$.}
\ee
We consider first that $s_n\to s\in(0,1]$. Let $x_2\in I=(0,1)$ be fixed and define $w_n(t) := u_n(t,x_2)$. Without loss
of generality, we have $w_n(t)\in H_0^2(I)$ and $w_n\wto w$ weakly in $H^2(I)$, where $w(t):=u(t,x_2)\in H_0^2(I)$.\\\\
Since $\seqn{w_n}\subset H_0^2(I)$, we have $\seqn{w_n}\subset \mathbb I^{1}(L^1(I))$, and
\begin{align*}
\norm{d^{s_n}w_n-d^sw}_{L^1(I)}&\leq \norm{d^{s_n}w_n-d^{s_n}w}_{L^1(I)}+\norm{d^{s_n}w-d^sw}_{L^1(I)}\\
&\leq \norm{d^{s_n}(w_n-w)}_{L^1(I)}+\norm{(d^{s_n}-d^s)w}_{L^1(I)}.
\end{align*}
Since $w\in \mathbb I^2(L^1(I))$, there exists $f\in L^1(I)$ such that $w=\mathbb I^2[f]$, and 
\begin{align*}
\norm{(d^{s_n}-d^s)w}_{L^1(I)}& = \norm{(\mathbb I^{-s_n}-\mathbb I^{-s})\mathbb I^2f}_{L^1(I)} \\
&= \norm{(\mathbb I^{2-s_n}-\mathbb I^{2-s})[f]}_{L^1(I)} \leq \abs{\mathbb I^{2-s_n}-\mathbb I^{2-s}}\norm{f}_{L^1(I)}\\
&=\abs{\mathbb I^{2-s_n}-\mathbb I^{2-s}}\norm{d^2 w}_{L^1(I)}.
\end{align*}
On the other hand, we have 
\begin{align*}
 \norm{d^{s_n}(w_n-w)}_{L^1(I)} &=  \norm{\mathbb I^{1-s_n}(\mathbb I^{1}[f_n]-\mathbb I^{1}[f])}_{L^1(I)}\\
 &\leq  \abs{\mathbb I^{1-s_n}}\norm{\mathbb I^{1}[f_n]-\mathbb I^{1}[f]}_{L^1(I)}= \abs{\mathbb I^{1-s_n}}\norm{dw_n-dw}_{L^1(I)}.
\end{align*}
Thus, we have 
\begin{align}
&\norm{\nabla^{s_n}u_n-\nabla^s u}_{L^1_{\ell^2}(Q;\rn)}\\
&\leq \norm{\nabla^{s_n}u_n-\nabla^s u}_{L^1_{\ell^1}(Q;\rn)}\notag\\
&\leq  \norm{\nabla^{s_n}u_n-\nabla^{s_n} u}_{L^1_{\ell^1}(Q;\rn)}+ \norm{\nabla^{s_n}u-\nabla^s u}_{L^1_{\ell^1}(Q;\rn)}.
\label{snunsnu_eq2}
\end{align}
We observe that 
\begin{align*}
 \norm{\nabla^{s_n}u_n-\nabla^{s_n} u}_{L^1_{\ell^1}(Q;\rn)}
&=\int_Q\abs{\partial_1^{s_n}u_n-\partial_1^{s_n} u}dx+\int_Q\abs{\partial_2^{s_n}u_n-\partial_2^{s_n} u}dx\\
&\leq \abs{\mathbb I^{1-s_n}}\fmp{\int_Q\abs{\partial_1^1u_n-\partial_1^1 u}dx+\int_Q\abs{\partial_2^1u_n-\partial_2^1 u}dx}\\
&\leq \abs{\mathbb I^{1-s_n}}\norm{\nabla u_n-\nabla u}_{L^2(Q)}.
\end{align*}
From \cite[Proposition 4.6]{liulud2019Image}, we have
\be
\sup\flp{\abs{\mathbb I^{1-t}}:\,\, t\in[0,1]}\leq\sup_{t\in[0,1]} \frac{1}{\Gamma(1-t+1)}<+\infty,
\ee
and hence
\begin{align}
\limsup_{s_n\to s}\norm{\nabla^{s_n}u_n-\nabla^{s_n} u}_{L^1(Q;\rn)}\leq \limsup_{s_n\to s}\norm{\nabla^{s_n}u_n-\nabla^{s_n} u}_{L^1_{\ell^1}(Q;\rn)}\notag\\
\leq \sup\flp{\abs{\mathbb I^{1-t}}:\,\, t\in[0,1]}\limsup_{n\to\infty}\norm{\nabla^{s_n}u_n-\nabla^{s_n} u}_{L^2(Q;\rn)}=0.\label{snunsnu_eq}
\end{align}
On the other hand
\begin{align*}
\norm{\nabla^{s_n}u-\nabla^s u}_{L^1_{\ell^1}(Q;\rn)}
&=\int_Q\abs{\partial_1^{s_n}u-\partial_1^s u}dx+\int_Q\abs{\partial_2^{s_n}u-\partial_2^s u}dx\\
&\leq \abs{\mathbb I^{2-s_n}-\mathbb I^{2-s}}\fmp{\int_Q\abs{\partial_1^2 u}dx+\int_Q\abs{\partial_2^2 u}dx}\\
&\leq \abs{\mathbb I^{2-s_n}-\mathbb I^{2-s}}\norm{u}_{H^2(Q)}.
\end{align*}
Thus, we have 
\begin{align*}
\limsup_{n\to\infty}\norm{\nabla^{s_n}u-\nabla^s u}_{L^1(Q;\rn)}&\leq \limsup_{n\to\infty}\norm{\nabla^{s_n}u-\nabla^s u}_{L^1_{\ell^1}(Q;\rn)}\\
&\leq \norm{u}_{H^2(Q)} \limsup_{s_n\to s}\abs{\mathbb I^{2-s_n}-\mathbb I^{2-s}}=0,
\end{align*}
as the operator $\mathbb I^r$ is strictly continuous for $r>0$. 
This, together with \eqref{snunsnu_eq} and \eqref{snunsnu_eq2}, gives \eqref{snunsnu_eq3}. Now assume that $s_n\to 0$. 
In this case we write $u=(u/2,u/2)$ in the above arguments, which then gives $\nabla^{s_n}u\to u$ strongly in $L^1(Q)$, as desired.\\\\
This, combined with Theorem \ref{thm_Gamma_tgv}, concludes the proof.
\end{proof}

Finally, we introduce the training scheme with training ground
\be\label{box_const}
\T:=[0,T]^{1+T}\times[1,T]^2\times [1,+\infty]^{1+T},
\ee
where $T\in\N$ is a given box-constraint. As introduced in Section \ref{sec:intro}, our semi-supervised (bilevel) training scheme $\mathcal T$ can be written as 
\begin{flalign}
\text{Level 1. }&\,\,\,\,\,\,\,\,\,\,\,\,\,\,\,\,(\alpha_{\T}, r_{\T},p_\T)\in\mathbb A[\T]:=\argmin\flp{\norm{u_{\alpha,r,p}-u_c}_{L^2(Q)}^2:\,\,(\alpha,r,p)\in\mathbb T}\tag{$\mathcal T$-L1}\label{S_scheme_L1_D0}&\\
\text{Level 2. }&\,\,\,\,\,\,\,\,\,\,\,\,\,\,\,\,u_{\alpha,r,p}:=\argmin\flp{\norm{u-u_\eta}_{L^2(Q)}^2+ \RVL^{r,\kappa}_{\alpha,p}(u)+\kappa H^{\lfloor r_1\rfloor+1}(u):\right.\\
&\,\,\,\,\,\,\,\,\,\,\,\,\,\,\,\,\,\,\,\,\,\,\,\,\,\,\,\,\,\,\,\,\,\,\,\,\,\,\,\,\,\,\,\,\,\,\,\,\,\,\,\,\,\,\,\,\,\,\,\,\,\,\,\,\,\,\,\,\,\,\,\,\,\,\,\,\,\,\,\,\,\,\,\,\,\,\,\,\,\,\,\,\,\,\,\,\,\,\,\,\,\,\,\,\,\,\,\,\,\,\,\,\,\,\,\,\,\,\,\,\,\,\,\,\,\,\,\,\,\,\,\,\,\,\,\,\,\,\,\,\left. u\in H_0^{\lfloor r_1\rfloor+1}(Q)}.\tag{$\mathcal T$-L2}\label{S_scheme_L2_D0}&
\end{flalign}

\begin{theorem}\label{thm_TS_S_solution}
Let a Training Ground $\mathbb T$ be given. 
Then, the training scheme $\mathcal T$ (\eqref{S_scheme_L1_D0}-\eqref{S_scheme_L2_D0}) 
admits at least one solution $( \alpha_\T, r_\T,p_\T)\in \T$,
and provides a corresponding optimal reconstructed image $u_{\alpha_\T, r_\T,p_\T}$.
\end{theorem}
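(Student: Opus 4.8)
The plan is to treat the bilevel scheme $\mathcal{T}$ by the direct method of the calculus of variations at the upper level, using the $\Gamma$-convergence result of Theorem \ref{thm:new-Gamma thm} to transfer stability from the lower level. First I would record that the lower level \eqref{S_scheme_L2_D0} is well posed: for each fixed $(\alpha,r,p)\in\T$ the functional $\mathcal{I}_{\alpha,r,p}^\kappa$ admits a \emph{unique} minimizer $u_{\alpha,r,p}\in H_0^{\lfloor r_1\rfloor+1}(Q)$, by the very same argument as in Proposition \ref{unique_exist_lower} (coercivity and strict convexity from the $L^2$-fidelity together with the Huber term $\kappa H^{\lfloor r_1\rfloor+1}$, and lower semicontinuity from Theorem \ref{weak_star_comp_s}). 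Consequently the map $(\alpha,r,p)\mapsto u_{\alpha,r,p}$ is well defined, and the upper-level objective $J(\alpha,r,p):=\norm{u_{\alpha,r,p}-u_c}_{L^2(Q)}^2$ is a genuine function on $\T$, so it remains to show $J$ attains its infimum.

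Next I would invoke compactness of the training ground. The set $\T=[0,T]^{1+T}\times[1,T]^2\times[1,+\infty]^{1+T}$ is compact in the product topology, each factor being a compact subinterval of the extended reals. Taking a minimizing sequence $(\alpha_n,r_n,p_n)\in\T$ for $J$, I extract a (non-relabeled) subsequence with $(\alpha_n,r_n,p_n)\to(\alpha_0,r_0,p_0)\in\T$. Writing $u_n:=u_{\alpha_n,r_n,p_n}$ and comparing $\mathcal{I}_{\alpha_n,r_n,p_n}^\kappa(u_n)$ against the admissible competitor $u=0$ (for which the $\RVL$ seminorm and the Huber term vanish) gives the uniform bound $\mathcal{I}_{\alpha_n,r_n,p_n}^\kappa(u_n)\leq\norm{u_\eta}_{L^2(Q)}^2$, hence a uniform bound on $\norm{u_n}_{L^2(Q)}$ and on the Huber term $\kappa H^{\lfloor r_{1,n}\rfloor+1}(u_n)$. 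Since $r_{1,n}\in[1,T]$ the orders are uniformly bounded, so exactly as in Proposition \ref{unique_exist_lower} this controls $\seqn{u_n}$ uniformly in a fixed Sobolev space (at least $H^2(Q)$); by the compact embedding $H^{m}(Q)\hookrightarrow\hookrightarrow L^2(Q)$ I pass to a further subsequence with $u_n\to u_0$ strongly in $L^1(Q)$ and, using the higher-order bound, strongly in $L^2(Q)$ as well, with $u_0\in H_0^{\lfloor r_{1,0}\rfloor+1}(Q)$.

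I would then close the argument by the fundamental theorem of $\Gamma$-convergence. The liminf inequality (LI) of Theorem \ref{thm:new-Gamma thm} gives $\mathcal{I}_{\alpha_0,r_0,p_0}^\kappa(u_0)\leq\liminf_{n\to\infty}\mathcal{I}_{\alpha_n,r_n,p_n}^\kappa(u_n)$. For any competitor $w\in H_0^{\lfloor r_{1,0}\rfloor+1}(Q)$ the recovery-sequence property (RS) produces $w_n\to w$ in $L^1(Q)$ with $\limsup_{n\to\infty}\mathcal{I}_{\alpha_n,r_n,p_n}^\kappa(w_n)\leq\mathcal{I}_{\alpha_0,r_0,p_0}^\kappa(w)$; the minimality $\mathcal{I}_{\alpha_n,r_n,p_n}^\kappa(u_n)\leq\mathcal{I}_{\alpha_n,r_n,p_n}^\kappa(w_n)$ then chains to $\mathcal{I}_{\alpha_0,r_0,p_0}^\kappa(u_0)\leq\mathcal{I}_{\alpha_0,r_0,p_0}^\kappa(w)$ for all such $w$. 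Hence $u_0$ minimizes $\mathcal{I}_{\alpha_0,r_0,p_0}^\kappa$, and by uniqueness of the lower-level solution $u_0=u_{\alpha_0,r_0,p_0}$. Finally, since $u_n\to u_0$ strongly in $L^2(Q)$, continuity of the $L^2$-norm yields $J(\alpha_0,r_0,p_0)=\norm{u_0-u_c}_{L^2(Q)}^2=\lim_{n\to\infty}\norm{u_n-u_c}_{L^2(Q)}^2=\inf_{\T}J$, so $(\alpha_0,r_0,p_0)$ solves \eqref{S_scheme_L1_D0} with optimal reconstruction $u_{\alpha_0,r_0,p_0}$.

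The step I expect to be most delicate is the identification of the $\Gamma$-limit minimizer across the \emph{variation of the admissible space} $H_0^{\lfloor r_{1,n}\rfloor+1}(Q)$, specifically when $r_{1,0}$ is an integer, so that $\lfloor r_{1,n}\rfloor$ may jump along the sequence and the uniform Sobolev bound degrades to the weaker order. There one must check that the boundary-condition compatibility recorded in Remark \ref{good_condition}, together with the integer-order asymptotics already established in Proposition \ref{thm_asymptitic_PGV} and Theorem \ref{thm_Gamma_tgv}, still places $u_0$ in the correct space $H_0^{\lfloor r_{1,0}\rfloor+1}(Q)$ and keeps the recovery sequences admissible. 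All remaining estimates are routine, since the heavy analytic work---semicontinuity and the asymptotics in $r$ and $p$ for the full infimal-convolution regularizer---has already been carried out in Theorem \ref{thm:new-Gamma thm}.
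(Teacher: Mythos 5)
Your overall strategy---compactness of the training ground $\T$ plus the fundamental theorem of $\Gamma$-convergence applied to Theorem \ref{thm:new-Gamma thm}---is exactly the paper's strategy for its ``Case 1'', and your treatment of that case is in fact slightly cleaner: you extract strong $L^2$ convergence of the reconstructions from the uniform Huber bound and a compact Sobolev embedding, whereas the paper settles for weak $L^2$ convergence plus lower semicontinuity of the norm. The well-posedness and uniqueness of the lower level, the uniform bound via the competitor $u=0$, and the chaining of (LI) with (RS) to identify the limit $u_0$ with $u_{\alpha_0,r_0,p_0}$ are all sound as far as they go.

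The genuine gap is the degenerate case in which the limit intensity vector $\alpha_0$ has one or more vanishing components. The training ground is $[0,T]^{1+T}\times\cdots$, so such limit points are admissible, yet every $\Gamma$-convergence statement you invoke (Theorem \ref{thm:new-Gamma thm}, via Theorem \ref{thm_Gamma_tgv} and the results imported from \cite{liu2016Image}) is formulated for $\alpha_n\to\alpha$ with $\alpha\in\R^{\lfloor r_2\rfloor+1}_+$, i.e.\ with strictly positive entries. When a component of $\alpha_0$ vanishes the regularizer degenerates, and the identification of $\lim u_n$ can no longer be delegated to (LI)/(RS). The paper devotes the entire second half of its proof (``Case 2'') to precisely this situation: it mollifies the datum, $u_\eta^\e:=u_\eta\ast\eta_\e$, inserts $u_\eta^\e$ as a competitor in the lower-level problem to obtain $\norm{u_{\alpha_n,r_n,p_n}-u_\eta}_{L^2(Q)}^2\leq \norm{u_\eta^\e-u_\eta}_{L^2(Q)}^2+\alpha_{i,n}\norm{u_\eta^\e}_{W^{\cdot,\infty}}$, and then sends $\alpha_{i,n}\to 0$ followed by $\e\to 0$ to pin down the limiting reconstruction directly. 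You would need either to add such a direct comparison argument or to extend the $\Gamma$-convergence results to boundary values of $\alpha$ before your appeal to the fundamental theorem is licensed. (The issue you flag as most delicate---the jump of $\lfloor r_{1,n}\rfloor$ at integer $r_{1,0}$---is a legitimate concern as well, but it is not the one the paper's proof actually has to work around.)
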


\begin{proof}[Proof of Theorem \ref{thm_TS_S_solution}]
Let the \emph{TrainingGround} $\mathbb T$ be fixed. Let $\seqn{\alpha_n,r_n}\subset  \mathbb T$ be a minimizing sequence 
obtained from \eqref{S_scheme_L1_D0}. Then, since $\mathbb T$ is compact, up to a subsequence (not relabeled), 
there exists $(\bar\alpha,\bar  r, \bar p)\in\mathbb T$ such that $(\alpha_n,r_n,p_n)\to(\bar \alpha,\bar r,\bar p)\in\mathbb T$ and 
\be\label{goes_to_hell_heaven}
\limn \,\norm{u_c-u_{\alpha_n,r_n,p_n}}_{L^2(Q)}^2\to m:=\inf\flp{\norm{u_c-u_{\alpha,r,p}}_{L^2(Q)}^2:\,\,(\alpha,r)\in\mathbb T}.
\ee
We show that $(\bar\alpha,\bar r,\bar p)\in\mathbb A[\mathbb T]$ (defined in \eqref{S_scheme_L1_D0}), and we split our arguments into two cases.\\\\
\underline{Case 1:} Assume $\bar \alpha>0$. That is, every components of $\bar \alpha$ is non-zero. Then, in view of Theorem \ref{thm:new-Gamma thm} and the general properties of the $\Gamma$-convergence, we have 
\be
u_{\alpha_n,r_n,p_n}\wto u_{\bar \alpha,\bar r,\bar p}\text{ weakly}\text{ in }L^2(Q).
\ee
Therefore, we have
\be
\norm{u_{\bar\alpha,\bar r,\bar p}-u_c}_{L^2(Q)}\leq \liminfn \norm{u_{\alpha_n,r_n,p_n}-u_c}_{L^2(Q)}=m,
\ee
which implies that $(\alpha_n,r_n,p_n)\in\mathbb A[\mathbb T]$, completing the proof.\\\\
\underline{Case 2:} Assume now that at least one component of $\bar\alpha$ is zero. In this case, by \eqref{goes_to_hell_heaven}, 
there exists $\bar u\in L^2(Q)$ such that, up to a subsequence, $u_{\alpha_n,r_n,p_n}\wto \bar u$ in $L^2(Q)$. We claim that $u_{\alpha_n,r_n,p_n}\to u_\eta$ strongly in $L^2(Q)$. Extend $u_\eta$ to
zero outside $Q$, and define 
\be
u_{\eta}^\e:=u_\eta\ast \eta_\e
\ee
where $\eta_\e$ is some mollifier, whose particular form is however, not very relevant. Then we have $u_\eta^\e\in C_c^\infty(\rn)$, and $u_{\eta}^\e\to u_\eta$ strongly in $L^2(\rn)$. \\\\
We only consider the case that $\seqn{r_{2,n}}\subset [1,2]$, as the other cases can be handled similarly. That is, we have $\seqn{\alpha_n} =\seqn{(\alpha_{1,n},\alpha_{2,n})}\subset [0,T]^2$ and $\bar\alpha = (\bar\alpha_1,\bar\alpha_2)\in[0,T]^2$.  Assume first that $\bar \alpha_1=0$, then by the optimality of \eqref{S_scheme_L2_D0}, we have
\begin{align*}
&\norm{u_{\alpha_n,r_n,p_n}-u_\eta}^2_{L^2(Q)}+\RVL_{\alpha_n,p_n}^{r_n,\kappa}(u_{\alpha_n,r_n,p_n})\\
&\leq \norm{u_{\eta}^\e-u_\eta}^2_{L^2(Q)}+\RVL_{\alpha_n,p_n}^{r_n,\kappa}(u_\eta^\e)\leq \norm{u_{\eta}^\e-u_\eta}^2_{L^2(Q)}+\alpha_{1,n}TV^{r_{1,n}}(u_\eta^\e)\\
&\leq \norm{u_{\eta}^\e-u_\eta}^2_{L^2(Q)}+\alpha_{1,n} \norm{u_\eta^\e}_{W^{\lfloor r_1\rfloor+1,+\infty}(\rn)}.
\end{align*}
That is, 
\be
\norm{u_{\alpha_n,r_n,p_n}-u_\eta}^2_{L^2(Q)}\leq  \norm{u_{\eta}^\e-u_\eta}^2_{L^2(Q)}+\alpha_{1,n} \norm{u_\eta^\e}_{W^{\lfloor r_1\rfloor+1,+\infty}(\rn)},
\ee
and we conclude by first taking the limit $\alpha_{1,n}\to \bar\alpha_1=0$, and then the limit $\e\to 0$.\\\\
Now we assume $\bar\alpha_2=0$. We again observe that 
\begin{align*}
&\norm{u_{\alpha_n,r_n,p_n}-u_\eta}^2_{L^2(Q)}+\RVL_{\alpha_n,p_n}^{r_n,\kappa}(u_{\alpha_n,r_n,p_n})\\
&\leq \norm{u_{\eta}^\e-u_\eta}^2_{L^2(Q)}+\RVL_{\alpha_n,p_n}^{r_n,\kappa}(u_\eta^\e)\leq \norm{u_{\eta}^\e-u_\eta}^2_{L^2(Q)}+\alpha_{2,n}TV^{r_{1,n}+r_{2,n}}(u_\eta^\e)\\
&\leq \norm{u_{\eta}^\e-u_\eta}^2_{L^2(Q)}+\alpha_{2,n} \norm{u_\eta^\e}_{W^{\lfloor r_1\rfloor+\lfloor r_2\rfloor+2,+\infty}(\rn)},
\end{align*}
and we conclude again by first taking the limit $\alpha_{1,n}\to \bar\alpha_1=0$, and then the limit $\e\to 0$.
\end{proof}

\begin{remark}\label{pract_box_c}
Note that the box constraint, defined in \eqref{box_const}, is only used to guarantee that a minimizing sequence $\seqn{(\alpha_n,r_n,p_n)}$, obtained from \eqref{S_scheme_L1_D0}, has a convergent subsequence. Alternatively, different box-constraints for different parameter $\alpha$, $r$, and $p$ might be enforced, such as 
\begin{align}\label{un_boxed}
(r,\alpha,p)=&(r_1,r_2,\alpha_0,\ldots,\alpha_{\lfloor T_2\rfloor+1},p_0,\ldots,p_{\lfloor T_2\rfloor+1})\\&\in[1,T_1]\times [1,T_2]\times [0,P_1]\times\cdots\times[0,P_i]\times\cdots\times [0,P_{\lfloor T_2\rfloor+2}]\times [1,+\infty]^{\lfloor T_2\rfloor+2},
\end{align}
where $1<T_1$, $T_2<+\infty$, and $0<P_i<+\infty$, for each $i\in\N$.\\\\
Note that in \eqref{un_boxed} we have the auxiliary order $r_2$ of $\RVL$ regularizer belongs to $[1,T_2]$, and hence the number of parameter $\alpha$ and $p$ then determined by the integer part of $T_2$. 
\end{remark}

\section{Simulations and insights}\label{num_sim_pd}
In this Section we perform numerical simulations of the bilevel scheme $\mathcal T$ using the corrupted image $u_\eta$ and the corresponding clean image $u_c$ shown in Figure \ref{fig:clean_noise}. The Level 2 problem \eqref{S_scheme_L2_D0} is solved via the primal-dual algorithm studied in \cite{chambolle2011first, chambolle2017stochastic}.\\\\
To make an appropriate comparison, we apply our proposed training scheme $\mathcal T$ (\eqref{S_scheme_L1_D0}-\eqref{S_scheme_L2_D0}) 
on the training data $(u_\eta,u_c)$ shown in Figure \ref{fig:clean_noise} with the following different training grounds (recall Remark \ref{pract_box_c}):
\be\label{test_ground0}
\T_0=\flp{(r,\alpha,p)}:= \flp{1}^2\times [0,1]\times[1,+\infty],
\ee
\be\label{test_ground1}
\T_1=\flp{(r,\alpha,p)}:=\flp{1}\times[1,2]\times\fmp{0,1}^2\times \flp{1}^2,
\ee
\be\label{test_ground2}
\T_2=\flp{(r,\alpha,p)}:=\fmp{1,2}^2\times [0,1]^2\times [1,+\infty]^2.
\ee
Note that the training ground $\T_0$ gives the original training scheme $\mathcal B$ (\eqref{intro_B_train_level1}-\eqref{intro_B_train_level2}) with $TV$ regularizer only. In
the training ground $\T_1$ in \eqref{test_ground1}, the auxiliary order $r_2$ of $\RVL$ regularizer, defined in \eqref{TVL_fractional}, can vary inside interval $[1,2]$. That is, the training ground $\T_1$ allows the $\RVL$ regularizer to provide a unified approach to the classical regularizer $TV$ and $TGV^2$. The training ground $\T_2$ provides an even further extension compared to $\T_1$, by allowing the primal order $r_1$ to vary in $[1,2]$.

\begin{figure}[!h]
  \centering
        \includegraphics[width=1.0\linewidth]{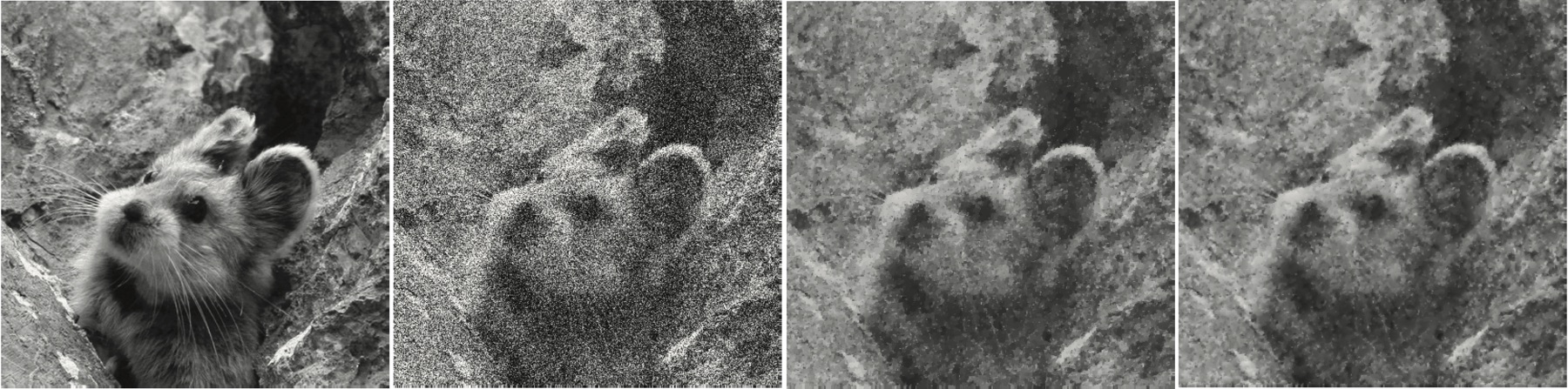}
\caption{Left to right: clean test image; corrupted (noisy) version (with artificial Gaussian noise); optimal reconstructed image provided by Scheme $\mathcal B$; optimal reconstructed image provided by Scheme $\mathcal T$ with training ground $\T_2$ defined in \eqref{test_ground2}.}
\label{fig:clean_noise}
\end{figure}

We summarize our simulation results in Table \ref{table_test_result} below. Recall the concept of \emph{Minimum Assessment Value} (MAV) used in \eqref{mav_value}.

\begin{table}[!h]
\begin{tabular}{|l|l|l|l|l|l|}
\hline
TG & optimal solution & MAV   \\ \hline
$\T_0$ & $\alpha_{\T_0}=0.056$  & 17.482    \\ \hline
$\T_1$ &$\alpha_{\T_1}=(0.052,0.798)$, $r_{\T_1}=(1,1.2)$  &17.014  \\ \hline
$\T_2$ &$\alpha_{\T_2}=(0.037,0.911)$, $r_{\T_2}=(1.9,1.2)$, $p_{\T_2}=(1.1,3.5)$  &16.214  \\ \hline
\end{tabular}
\caption{Minimum assessment value (MAV) for scheme $\mathcal T$ over the training ground (TG) defined in \eqref{test_ground0}, \eqref{test_ground1}, and \eqref{test_ground2}.} 
\label{table_test_result}
\end{table}

We see from Table \ref{table_test_result} that as we expand the training ground $\T$, the MSV value starts dropping. That is, the scheme $\mathcal T$ (\eqref{S_scheme_L1_D0}-\eqref{S_scheme_L2_D0})  with regularizer $\RVL$ does indeed provide a better solution compared to the scheme $\mathcal B$ (\eqref{intro_B_train_level1}-\eqref{intro_B_train_level2}).\\\\
Finally, in order to gain further insights on the numerical landscape of assessment function defined as
\be
\mathcal A(\alpha,r,p):=\norm{u_{\alpha,r,p}-u_c}_{L^2(Q)},
\ee
we compute it for a training ground $\T_3:=[1,2]\times\flp{1}\times [0,1]\times\flp{1}$. That is, in $\T_3$ we only allow the primal order $r_1$ and the associated intensity parameter $\alpha_1$ to change, and freeze all other parameters. Then, the numerical landscapes of the assessment function $\mathcal A(\alpha,r)$ are visualized in Figure \ref{fig:2nd_counter_exam}. 

\begin{figure}[!h]
\begin{subfigure}{.495\textwidth}
  \centering
        \includegraphics[width=1.0\linewidth]{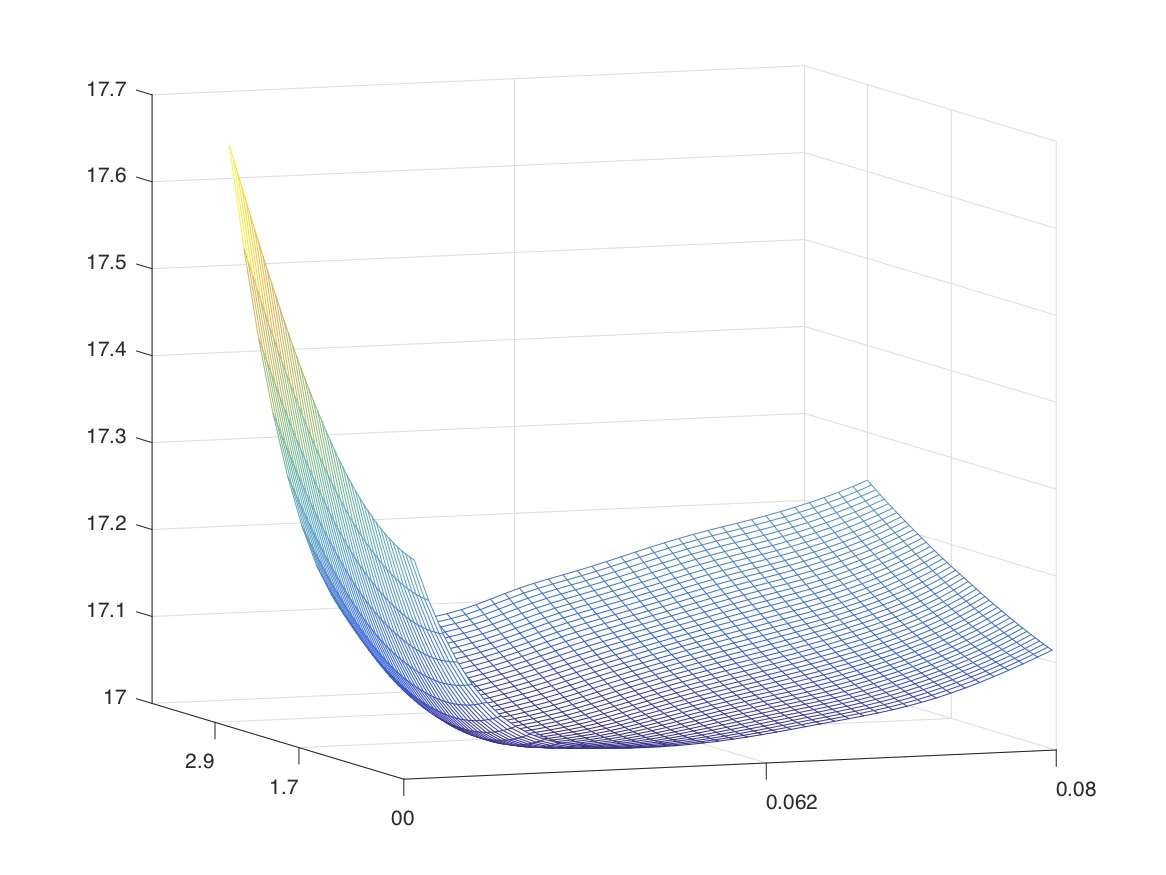}
        \caption{Mesh plot of assessment function}
        \label{fig:step_conter_start}
\end{subfigure}
\begin{subfigure}{.495\textwidth}
  \centering
        \includegraphics[width=1.0\linewidth]{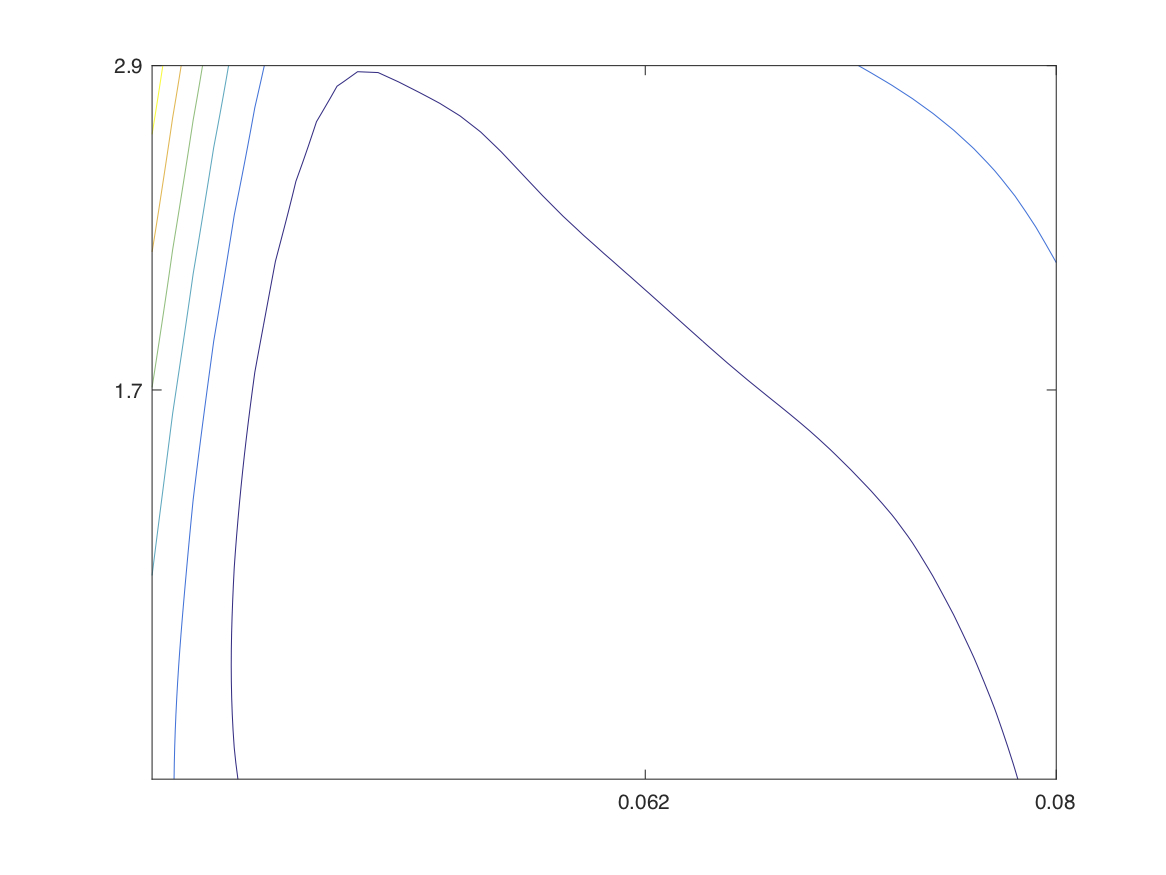}
        \caption{Contour line of assessment function}
        \label{fig:big_side_noise_low_resu_error}
\end{subfigure}
\caption{We note that the assessment function $\mathcal A(\alpha,r,)$ with training ground $\T_3$ is not convex.}
\label{fig:2nd_counter_exam}
\end{figure}

From Figure \ref{fig:2nd_counter_exam} we see that the assessment function with training ground $\T_3$ is not convex, and hence we could expect that others with training ground $\T_1$ and $\T_2$ would not be as well, since solving scheme $\mathcal T$ is equivalent to finding the global minimizer of assessment function $\mathcal A(\alpha,r,p)$. The non-convexity of the assessment function implies that the training scheme $\mathcal T$ might not have a unique global minimizer, i.e., the set $\mathbb A[\T]$ has more than one element. More importantly, the non-convexity of $\mathcal A(\alpha,r,p)$ prevents us from using the standard gradient descent methods to find a global minimizer, since we may get trapped at a local minimum. Hence, a numerical scheme for solving the training scheme $\mathcal T$ (upper level problem) remains an open question. We refer the reader to the recent work \cite{liu2019ATC} for partial results.

\section*{Acknowledgements}
The work of Pan Liu has been supported by the Centre of Mathematical Imaging and Healthcare and funded by the Grant ``EPSRC Centre for Mathematical and Statistical Analysis of Multimodal Clinical Imaging" with No. EP/N014588/1.
Xin Yang Lu acknowledges the support of NSERC Grant 
{\em ``Regularity of minimizers and pattern formation in geometric minimization problems''},
and of the Startup funding, and Research Development Funding
of Lakehead University.

\bibliographystyle{abbrv}
\bibliography{RLS_VReady}{}

\end{document}